\date{}
\numberwithin{equation}{section}
\newtheorem*{Thm*}{Theorem}
\newcommand{\E}{\mathbb{E}}
\newcommand{\al}{\alpha}
\newcommand{\T}{T}
\theoremstyle {definition} \newtheorem {defi} {Definition} [section] }
\theoremstyle {plain}  \newtheorem {thm} [defi] {Theorem}}
\theoremstyle {plain}  \newtheorem {cor} [defi]{Corollary}}
\theoremstyle {plain} \newtheorem {prop} [defi]{Proposition}}
\theoremstyle {plain} \newtheorem {nem}[defi] {Lemma}}
\theoremstyle {plain} }
\theoremstyle {plain} \newtheorem {rmq}[defi] {Remark}}
\theoremstyle {plain}  }
\theoremstyle {plain}  }
\theoremstyle {plain}  }
\theoremstyle {plain} }
\theoremstyle {plain} }
\theoremstyle {plain} }
\theoremstyle {plain} }
\def\E{{\Bbb{E}}}
\def\T{{\Bbb{T}}}
\def\P{{\Bbb{P}}}
\def\R{{\Bbb{R}}}
\def\C{{\Bbb{C}}}
\def\N{{\Bbb{N}}}
\def\i{{\textbf{i}}}
\def\dt{{\partial_t}}
\def\pp{{\text{ < }}}
\def\lleq{{\ \lesssim\ \ }}
\begin{document}

\author[M. Sy]{Mouhamadou Sy}

\address{%Mouhamadou Sy
\newline 
\indent Department of Mathematics, Imperial College London \indent 
\newline \indent  London SW7 2AZ, United Kingdom\indent }
\email{m.sy@imperial.ac.uk}

\title[Energy supercritical NLS on $\T^3$]{Almost sure global well-posedness for the energy supercritical Schrödinger equations}

\begin{abstract}
   We consider the  Schr\"odinger equations with arbitrary (large) power non-linearity on the three-dimensional torus. We construct non-trivial probability measures supported on Sobolev spaces and show that the equations are globally well-posed on the supports of these measures, respectively. Moreover, these measures are invariant under the flows that are constructed. Therefore, the constructed solutions are recurrent in time.\\ Also, we show \textit{slow growth} control  on the time evolution of the solutions. A generalization to any dimension is given. Our proof relies on a new approach combining the fluctuation-dissipation method and some features of the Gibbs measures theory for Hamiltonian PDEs. The strategy of the paper applies to other contexts.\\
   
\noindent
\textbf{R\'esum\'e}: Nous consid\'erons des \'equations de Schr\"odinger avec des puissance arbitrairement grandes de la nonlin\'earit\'e sur le tore tri-dimensionel. Nous construisons des mesures de probabilit\'e non triviales support\'ees sur des espaces de Sobolev et montrons que les \'equations sont globalement bien pos\'ees sur les supports de ces mesures, respectivement.  De plus, ces mesures sont invariantes sous les flots qui sont construits.  Par cons\'equent, les solutions construites sont r\'ecurrentes en temps.\\ Nous \'etablissons \'egalement des contr\^oles de \textit{faible croissance} sur l'\'evolution en temps des solutions.  Une g\'en\'eralisation \`a toutes dimensions est fournie.  Notre preuve se base sur une nouvelle approche combinant la m\'ethode de fluctuation-dissipation  et certaines caract\'eristiques de la th\'eorie des mesures de Gibbs pour les EDPs hamiltoniennes. La strat\'egie de cet article s'applique \`a d'autres contextes. \\

\noindent
\textbf{Keywords}: Supercritical Schrödinger equation, global solutions, invariant measure, long time behavior,  statistical ensemble.\\

\noindent
\textbf{MSC Classification:} 28D05, 60H30, 35R60, 60H15, 37L50. \\
\end{abstract}

\maketitle

\nocite{*}
%\tableofcontents

\section{Introduction}
\subsection{Context} We consider the following Schr\"{o}dinger equations
%
%e1 #&#
\begin{equation}
{\partial _{t}}u={\textbf{i}}(\Delta u-|u|^{p-1}u),
\label{Equ_NLS7-1}
\end{equation}
where ${\textbf{i}}$ is the complex number that satisfies
${\textbf{i}}^{2}=-1$. Here we consider that the unknown function
$u=u(t,x)$ is defined on ${\mathbb{R}}\times {\mathbb{T}}^{3}$ and takes values
in $\mathbb{C}$,
${\mathbb{T}}^{3}=\left (\frac{{\mathbb{R}}}{2\pi \mathbb{Z}}\right )^{3}$ is a
three-dimensional torus. Throughout the paper, the parameter $p$ can be
taken to be any real number bigger or equal to $3$, although we highlight
the case $p>5$ where the equation is much less understood.

The present work is devoted to proving a probabilistic global well-posedness
for \eqref{Equ_NLS7-1} and to establishing strong controls on the growth
in time of the solutions by using an invariant measure technique. Before
we present our method and results, let us recall the structure of the
equation and some known results. The equation \eqref{Equ_NLS7-1} is Hamiltonian
and is derived from the energy
\begin{align*}
H(u) &=\int _{{\mathbb{T}}^{3}}\left (\frac{1}{2}|\nabla u(t,x)|^{2}+
\frac{1}{p+1}|u(t,x)|^{p+1}\right )dx.
\end{align*}
It also preserves the following quantity (its mass)
%
%e2 #&#
\begin{align}
M(u) &=\frac{1}{2}\int _{{\mathbb{T}}^{3}} |u(t,x)|^{2}dx,
\label{MassNLS}
\end{align}
and obeys the scaling invariance
%
%e3 #&#
\begin{equation}
u_{\lambda }(t,x)=\lambda u(\lambda ^{p-1}t,\lambda ^{\frac{p-1}{2}}x).
\end{equation}
A direct computation shows that, for the homogeneous Sobolev norm of order
$s$,
\begin{align*}
\|D^{s}u_{\lambda }\|=\lambda ^{1+(p-1)\left (\frac{s}{2}-\frac{3}{4}
\right )}\|D^{s}u\|.
\end{align*}
It follows that the critical exponent of \eqref{Equ_NLS7-1} is
$s_{c}=\frac{3}{2}-\frac{2}{p-1}$. For $p>5$, the space $H^{s_{c}}$ is
smoother than the energy space $H^{1}$, we have that \eqref{Equ_NLS7-1} is energy supercritical. This results in that the global
regularity problem of \eqref{Equ_NLS7-1} is an extremely challenging open
problem. We do not know any global well-posedness result on a Sobolev space
for \eqref{Equ_NLS7-1} in dimension $3$. In the best of our knowledge,
the Cauchy problem for \eqref{Equ_NLS7-1} considered in
${\mathbb{R}}^{3}$ is solved only locally in time or globally for small data
on some Sobolev spaces \cite{cazweis}. On the torus
${\mathbb{T}}^{3}$, we can establish a local theory as well, at least for
smooth enough data, for example beyond the $H^{\frac{3}{2}}$-regularity.
But, at this level of regularity, the energy does not control the relevant
norm, and then cannot be an argument of globalization. In the present work,
we globalize the local theory of \eqref{Equ_NLS7-1} posed on
$H^{s}({\mathbb{T}}^{3})$, $s\geq 2$, for data belonging to subsets of these
spaces, where we have proved a control on the relevant norm. These subsets
are constructed using a probabilistic method and they satisfy some qualitative
and quantitative properties. For example, we prove that they contain data
of any given size.

The critical and subcritical cases of \eqref{Equ_NLS7} ($p\leq 5$) were
widely studied by many authors (see
\cite{bourfocS,bourgain99,GrllakscrtNLS,BGT04,bgtGS,bgtcoll,CKSTTcrtNLS,htt,ionpaus,ptw}
and references therein), in particular global well-posedness of the Cauchy
theory was proved, as well as long time dynamics properties. 

 For the supercritical setting ($p>5$), let us mention the work of Wang
\cite{wang} who used bifurcation analysis to construct global quasi-periodic
solutions to energy supercritical NLS and Beceanu-Deng-Soffer-Wu
\cite{beceanu} who constructed global scattering solutions for the equation
as well. A difference between the present setting and that of these two
papers is that, in one hand, the solutions we investigate are not small
compared to those in \cite{wang}; on the other hand, our physical space
is the torus compared to $\mathbb{R}^{3}$ in \cite{beceanu}. In particular
the solutions constructed in the present paper do not scatter. However,
we do not know if the small solutions contained in the constructed statistical
ensemble have a quasi-periodicity property as for those in
\cite{wang}.

%s1.2 #&#
\subsection{Invariant measures methods in dispersive PDEs}
\label{sec1.2}

The Gibbs measures techniques for dispersive PDEs go back to Lebowitz,
Rose and Speer \cite{LRS}, where one-dimensional nonlinear Schr\"{o}dinger
equations (1D NLS) are considered and Gibbs measures are constructed, the
question of existence of global flow matching the regularity of the support
of the measure was left open, and then so does the question of invariance.
Zhidkov \cite{zhidnls} redefined these measures via finite-dimensional
approximations and a passage to the limit, he showed the invariance in
the case of the wave equation \cite{zhidwave}. Bourgain
\cite{bourg94,bourgNLS96} constructed global flow and proved the invariance
of the measures in question for 1D (subquintic) NLS. He then showed similar
result for the 2D defocusing cubic Schr\"{o}dinger equation posed on
${\mathbb{T}}^{2}$, under a suitable renormalization. Tzvetkov
\cite{tzvNLS06,tzvNLS} considered the subquintic NLS equations (including
the focusing nonlinearity for the subcubic case) on the disc of
${\mathbb{R}}^{2}$ and constructed invariant Gibbs measures, he proved a
probabilistic global well-posedness on relevant spaces. Three-dimensional
results have followed in
\cite{bt2007,btrandom2,asds,bourbulNLS,bourbulW}. Several other contexts
were analyzed on the line of these techniques. An important feature in
this approach is that the Gibbs measures are concentrated on relatively
rough spaces. Namely, their supports are $\frac{d}{2}+$ degrees of regularity
weaker than that of the energies on which they are based. Here $d$ is the
(effective) dimension of the physical space, and is sensitive to some symmetry
assumptions. That is why these objects are often used to deal with the
global well-posedness on spaces of low regularity.

A second approach to construct invariant measures is the so called fluctuation-dissipation
method, based on `compact approximations' of the equation and a use of
stochastic tools. An inviscid limit is then considered. We go back to the
work of Kuksin \cite{kuk_eul_lim} and Kuksin and Shirikyan
\cite{KS04,KS12} for the 2D Euler equation and the cubic defocusing Schr\"{o}dinger
equation (in dimension $\leq 4$). For both of these equations, an invariant
measure on the Sobolev space $H^{2}$ is obtained. Let us also mention some
results of the author \cite{sybo,sykg} using this approach.

It is worth mentioning some `non-invariance' probabilistic methods in the
Cauchy problem of PDEs, see for instance
\cite{btrandom1,thomann2009random,colloh,MR3131480,burktzvt_probwav,pocovnicu2014,ohpocov}.

Let us also notice the works \cite{tzvQI1,tzQI2,tzQI3} that showed quasi-invariance
properties of Gaussian measures under Hamiltonian flows.

%s1.3 #&#
\subsection{The methodology of the paper}
\label{sec1.3}

In the present work, we introduce a new approach which is somehow hybrid
to deal with the supercriticality present in \eqref{Equ_NLS7-1}. Namely,
we combine the fluctuation-dissipation method with some features of the
approach based on Gibbs measures. Indeed, the nonlinearity is such that
usual energy methods do not provide continuity of the flow in the context
the standard fluctuation-dissipation for Hamiltonian PDEs, essentially because of a lack of time integrability. Also, the Gibbs
measure approach does not work either due to a lack of space regularity obstructing the analysis of (the large power of) the nonlinearity
in dimension 3 and higher. Our solution to overcome these two serious
issues is to combine the two approaches in a single new one. We plug fluctuation-dissipation
tools into the setting of the Gibbs measures as developed in
\cite{bourg94}. More precisely, we consider damped/driven Galerkin approximations
of the NLS \eqref{Equ_NLS7-1}, construct invariant measures that enjoy
bounds that are uniform both in the viscosity parameter and in the dimension
of the approximating equation. We pass first to the limit when the viscosity
goes to $0$ and obtain a sequence of invariant measures associated to the
(deterministic) Galerkin approximations for \eqref{Equ_NLS7-1}. We study
the infinite-dimensional limit in spirit of Bourgain \cite{bourg94}. In
order to obtain large deviation bounds that are exploitable in the Bourgain
argument, we introduce a new and carefully prepared dissipation operator
as discussed below. Once these bounds are obtained, we perform an extension
of the Bourgain framework to the measures that are not necessarily of Gibbs
type. Despite the lack of information on our measures, occasioned by the
compactness method, we were able to make the infinite-dimensional data
(living on the support of the limiting measure) inheriting the good properties
of their finite-dimensional approximations. To achieve this, a suitable
family of restriction measures (that are conditional probabilities) is
introduced and the Skorokhod representation theorem is used.

Let us discuss a first new ingredient of our proof: in a step of the approximation
argument (see Section~\ref{ASection3flucdiss}), we considered fluctuation-dissipation
on the Galerkin equation having this form:
\begin{align*}
du={\textbf{i}}[(\Delta -1) u-P_{N}(|u|^{p-1}u)]dt-\alpha [(1-\Delta )^{s-1}+e^{
\rho (\|u\|_{s-})}]udt+\sqrt{\alpha }d\eta _{N},
\end{align*}
where $\eta _{N}$ is a noise and $s- =s-\epsilon $, for some
$\epsilon >0$ close enough to $0$ (we use $s+$ in a similar way). Let us
focus on the factor $e^{\rho (\|u\|_{s-})}$ in the dissipation operator.
Remark that an application of the It\^{o} formula to the mass, given by \eqref{MassNLS}, provides a statistical control (under an eventual invariant
measure) of the quantity
\begin{align*}
{\mathbb{E}}e^{\rho (\|u\|_{s-})}\|u\|^{2} ,
\end{align*}
and therefore, to some extent, of the quantity
\begin{align*}
{\mathbb{E}}e^{\rho (\|u\|_{s-})}.
\end{align*}
Such a control is a crucial step in the use of the argument of Bourgain
\cite{bourg94} (see also \cite{tzvNLS06,tzvNLS}). The local existence time
for \eqref{Equ_NLS7} depends on the size of the data (that is in our situation
$T\sim \|u_{0}\|_{\frac{3}{2}+}^{1-p}$). Without the factor
$e^{\rho (\|u\|_{s-})}$, we should control only a quadratic power, which
seems to be not enough (see the proof of Proposition \ref{prop_control}). The `miracle' of this factor is that, beside its exponential
strength, it does not participate to usual estimation computations (such
as integration in $x$, projection, etc.) And it is of an assigned regularity.
That means its regularity is chosen and does not directly depend on the
structure of the equation; the function $\rho $ is also our choice modulo
some weak constraints. Furthermore, the stronger this factor the slower
the growth in time of the constructed solutions. This new ingredient should
be a trick that can be used in many other situations and its flexibility
can be exploited further. More generally, all this new approach might provide
a new way to construct global solutions and invariant measures and to establish
slow growth properties, specially for PDEs presenting strong supercriticality.%

The overall message of this strategy is that, by employing the performed
generalization of the Bourgain framework, one can extract individual (pointwise)
bounds from the statistical (integral) ones provided by the fluctuation-dissipation setting.
The regularity of the controlled quantities allows, in particular, to obtain
uniqueness and continuity properties (that are missing if we use only the classical
fluctuation-dissipation). The other bounds arise as a byproduct of the
method.

\subsection{Main results}

Set $v=e^{-{\textbf{i}}t}u$, we see that if $u$ solves \eqref{Equ_NLS7-1}, then $v$ solves the following equation
%
%e4 #&#
\begin{equation}
\label{Equ_NLS7}
{\partial _{t}}v={\textbf{i}}[(\Delta -1) v-|v|^{p-1}v].
\end{equation}
This formulation is more appropriate for dealing with the zero frequency.
Its energy is the following
\begin{align*}
E(v)=\int _{{\mathbb{T}}^{3}}\frac{1}{2}|v|^{2}+\frac{1}{2}|\nabla v|^{2}+
\frac{1}{p+1}|v|^{p+1}dx.
\end{align*}
Let us state the main result of the paper.
%
%t1.1 #&#
\begin{thm}%
\label{ThmPrincipal}
For any $s\geq 2$ and any increasing concave function
$\xi :{\mathbb{R}}_{+}\to {\mathbb{R}}_{+}$, there is a measure
$\mu =\mu _{s,\xi }$ concentrated on $H^{s}$ such that
\begin{enumerate}
\item
\label{Mainstat1}%
for $\mu $-almost any $u_{0}\in H^{s}$, there is a unique solution
$u\in C({\mathbb{R}},H^{s})$ to\eqref{Equ_NLS7} such that
$u(0)=u_{0}$;
\item the distributions of $M(u)$ and $E(u)$ via $\mu $ admit densities
with respect to the Lebesgue measure on ${\mathbb{R}}$.
\item
\label{MainBigness}%
For any $n >0$, there is a set $S_{n}$ such that $\mu (S_{n})>0$, and for
any $u_{0}\in S_{n}$, $\|u_{0}\|_{s}\geq n$.
\item The flow $\phi ^{t}$ implied by the statement \ref{Mainstat1} satisfies
the following properties:
\begin{enumerate}
\item For any $T_{0}>0$, there is $C(T_{0})>0$ such that for $\mu $-almost
any $u$ and $v$ in $H^{s}$ we have
\begin{align*}
\sup _{t\in [-T_{0},T_{0}]}\|\phi ^{t}u-\phi ^{t}v\|_{s}\leq C(T_{0})
\|u-v\|_{s}.
\end{align*}
\item The measure $\mu $ is invariant under $\phi ^{t}$.
\item For $\mu -$ almost all $u_{0}\in H^{s}$ we have the slow growth property
%
%e5 #&#
\begin{equation}
\label{Main}
\|\phi ^{t}u_{0}\|_{s-}\leq C_{\xi }(\|u_{0}\|_{s-})\xi (\ln (1+|t|))
\quad \text{for all $t\in {\mathbb{R}}$}.
\end{equation}
\end{enumerate}
\end{enumerate}
\end{thm}
Consequently, using the Poincar\'{e} recurrence theorem, we have that for
$\mu $-almost any $u_{0}\in H^{s}$, there is a sequence
$t_{k}\uparrow \infty $ such that
\begin{align*}
\lim _{t_{k}\to \infty }\|\phi ^{\pm t_{k}}u_{0}-u_{0}\|_{s}=0.
\end{align*}
That gives a long time property of the flow $\phi ^{t}$.%

Also, the point \ref{MainBigness} of the statement expresses the fact that
our result is not of small data type.

Now, in the control \eqref{Main} the function $\xi $ being concave gives
us the desired slow growth: concretely we can choose $\xi $ to be
$\log $ or $\log \circ \log $ or even `better' (as long as it is increasing
concave), and our result ensures then the existence of a measure with the
mentioned qualitative/quantitative properties which provides the chosen
control on the growth of the solutions.%
\\ This may be put in contrast with the Gibbs measure techniques, where the
Fernique theorem provides an estimate of the quantity
${\mathbb{E}}e^{c\|u\|_{r}^{2}}$, giving rise to a control of type
$\sqrt{\ln (1+|t|)}$.%

 To end our discussion let us state the generalization of the results to
all dimensions, specially to dimensions $d> 3$\textup{:}
%
%r1.1 #&#
\begin{rmq}
The results of this paper remain true if we consider the equation \eqref{Equ_NLS7} to be posed on a $d\-$dimensional torus
${\mathbb{T}}^{d}$, $d\geq 3$. The computations are the same modulo some
minor adaptations in the conditions of some statements. However, we need
to assume that $s>\frac{d}{2}$ so that Proposition \ref{propLWP} survives.
\end{rmq}

\subsection{Organization of the paper}

In  section~\ref{ASection2UniformLWP} we present a local well-posedness
result of \eqref{Equ_NLS7} and its Galerkin approximations on smooth spaces.
We emphasis on the fact that the time of existence can be taken independently
of the dimension. We show a convergence result.%
\\ In section~\ref{ASection3flucdiss}, we study the fluctuation-dissipation
equations based on the Galerkin approximations of \eqref{Equ_NLS7}, we
establish stochastic global well-posedness, existence of stationary measures
and we derive uniform estimates for them. Then, the section~\ref{ASection4Invisc} is devoted to the study of inviscid limits, it is
shown that the inviscid measures are invariant under the flows of the approximating
problems for \eqref{Equ_NLS7}, uniform in $N$ bounds are proved. In section~\ref{ASection5StatensemGWP}, we construct the infinite-dimensional statistical
ensemble, derive bounds for the approximating dynamics, and use them to
construct global flows for \eqref{Equ_NLS7} on $H^{r}$ for
$r\leq s-$ and for data living on the statistical ensemble. Section~\ref{ASection6InvarMeas} is concerned with the invariance of the infinite-dimensional
limiting measure. In section~\ref{ASection7GWPH2Size}, we use an argument
based on the propagation of regularity principle to state the almost sure
global wellposedness with respect to the $H^{s}$-regularity. We also deal
with the size of the data by constructing a cumulative measure. And finally,
we derive qualitative properties for the constructed measure in Section~\ref{ASection8Qualprop}.
\subsection*{General notations}
Consider the sequence $\left((2\pi)^{\frac{-3}{2}}e^{\i k\cdot x}\right)_{k\in\N^3}$ whose elements are normalized eigenfunctions of the Laplace operator $-\Delta$ on $\T^3=\left(\frac{\R}{2\pi\Bbb Z}\right)^3$. The associated eigenvalues are $|k|^2=k_1^2+k_2^2+k_3^2.$  We shall arrange the eigenfunctions in the increasing order of eigenvalues. Namely, denoting the latters as $0=\lambda_0<\lambda_1\leq \lambda_2\leq\cdots\lambda_m\leq\cdots$, we obtain the corresponding sequence of eigenfunctions  $(e_m)_{m\in\N}.$ The Weyl asymptotic states that $\lambda_m\sim  m^{\frac{2}{3}}.$ Let us denote by $e_{-m}$ the eigenfunction $\i e_m,$ we have that the sequence $(e_m)_{m\in\Bbb Z}$ forms a basis of $L^2=L^2(\T^3,\C).$\\
Therefore for $u\in L^2$, we have the representation
\begin{equation*}
u(x)=\sum_{m\in\Bbb Z}u_me_m(x).
\end{equation*}
We have the Parseval identity
\begin{align*}
\|u\|_{L^2}^2=:\|u\|^2=\int_{\T^3}|u(x)|^2dx=\sum_{m\in\Bbb Z}|u_m|^2.
\end{align*}

Let $s>0$. The Sobolev space $H^s:=H^s(\T^3;\Bbb C)$ is defined by the norm
\begin{align*}
\|u\|_s:=\sqrt{\|(1-\Delta)^{\frac{s}{2}}u\|^2}=\sqrt{\sum_{m\in\Bbb Z}(1+\lambda_m)^s|u_m|^2}.
\end{align*}
Since $\lambda_m$ are all non-negative integers, we have that $(1+\lambda_m)^s\leq(1+\lambda_m)^r$ for any $m$, if $s\leq r.$ We see then the embedding inequality
\begin{equation}\label{IneqEmbedding}
\|u\|_s\leq \|u\|_r\quad if\ \ s\leq r.
\end{equation}
 Let us define a real inner product on $L^2$ by
\begin{equation}
(u,v)=\Re \int_{\T^3} u(x)\bar{v}(x)dx,
\end{equation}
where $\Re z$ stands for the real part of the complex number $z$. Hence, we have the property
\begin{align}
(u,\i u)=0.
\end{align}

We denote by $E_N$ the subspace of $L^2$ generated by the finite family $\{e_m,\ |m|\in [0,N]\}$, the operator $P_N$ is the projector onto $E_N.$

For a functional $F: L^2\to \C,$ we denote by $F'(u;v)$ and $F''(u;v,w)$ its first derivative at $u$, evaluated at $v$, and its second derivative at $u$ evaluated at $(v,w)\in L^2\times L^2,$ respectively.\\

On the space $E_N$, we define a Brownian motion by
\begin{align}
\zeta_N(t,x)=\sum_{|m|\leq N}a_m\beta_m(t)e_{m}(x)\label{formula_BM}
\end{align}
where $(a_m)$ is a family of complex numbers, and $(\beta_m)$ is a sequence of independent standard real Brownian motions with respect to a filtration $(\mathcal{F}_t)$ and defined on a probability space $(\Omega,\mathcal{F}, \P)$.\\ The noise $\eta$ is defined as
\begin{align*}
\eta_N(t,x)=\frac{d}{dt}\zeta_N(t,x).
\end{align*}
Set the numbers
\begin{align*}
A_{s,N}=\sum_{|m|\leq N}|a_m|^2\lambda_m^s.
\end{align*}

For a Banach space $H$, we denote by $C_b(H)$ the space of bounded continuous functions on $H$ with range in $\R,$ and $\mathfrak{p}(H)$ the set of all the probability measures on $H$. \\
For a Banach space $X$ and an interval $I\subset\R$, we denote by $C(I,X)=C_tX$ the space of continuous functions $f:I\to X.$ The corresponding norm is $\|f\|_{C_tX}=\sup_{t\in I}\|f(t)\|_X.$\\
For $q\in [1,\infty),$ we also denote by $L^q(I,X)=L^q_tX,$ the Lebesgue's spaces given by the norm 
\begin{align*}
\|f\|_{L^q_tX}=\left(\int_I\|f(t)\|_X^qdt\right)^{\frac{1}{q}}.
\end{align*}
The inequality $A\lleq B$ between two positive quantities $A$ and $B$ means $A\leq C B$ for some $C>0.$\\
For a measure $\mu,$ we denote by $\text{Supp}(\mu)$ the support of $\mu.$

\section{Uniform local well-posedness and (deterministic) convergence}
\label{ASection2UniformLWP}
We have the following expansion of an element $u$ in $E_N=P_NL^2$:
\begin{align*}
u=\sum_{|m|\leq N}u_me_m(x).
\end{align*}
Notice that $E_\infty$ refers to $L^2,$ and the projector $P_\infty$ to the identity operator.

In the sequel, the notation $B_R(X)$ refers to the closed ball with center $0$ and radius $R>0$ of the Banach space $X$. 

Let us consider the problem
\begin{align}
\dt u &=\i[(\Delta-1) P_Nu-P_N(|P_Nu|^{p-1}P_Nu)], \label{equations}\\
u(t_0) &=P_Nu_0.\label{datum}
\end{align} 
\subsection{Uniform LWP}
\begin{prop}\label{propLWP}
Let $s>\frac{3}{2}$. For any $R>0,$ there is a constant $T:=T(R,s)$ such that for every $N\in \N^*\cup\{\infty\}$, any $u_0\in B_R(H^s),$ there is a unique $u_N\in X^s_T:= C((-T,T),H^s)$ satisfying \eqref{equations} and \eqref{datum}.
Moreover, we have
\begin{align}
\|u\|_{X^s_T}:=\sup_{t\in (-T,T)}\|u(t)\|_s\leq 2\|P_Nu_0\|_s.\label{control_local}
\end{align}
\end{prop}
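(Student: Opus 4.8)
The plan is to run a standard Picard iteration on the Duhamel formulation, uniformly in $N$. Let $S(t)=e^{\i t(\Delta-1)}$ be the linear propagator; since $\i(\Delta-1)$ is diagonal in the basis $(e_m)$, the operator $S(t)$ is unitary on $L^2$ and on every $H^s$, it preserves each $E_N$, and it commutes with $P_N$ and with $(1-\Delta)^{s/2}$, with operator norms independent of $t$ and of $N$. Duhamel's principle turns \eqref{equations}--\eqref{datum} into the fixed-point equation
\begin{equation*}
u(t)=S(t-t_0)P_Nu_0-\i\int_{t_0}^{t}S(t-\tau)\,P_N\big(|P_Nu(\tau)|^{p-1}P_Nu(\tau)\big)\,d\tau=:\Phi_N(u)(t),
\end{equation*}
and I would look for a fixed point of $\Phi_N$ in the complete metric space $\mathcal{B}=\{u\in X^s_T:\ \|u\|_{X^s_T}\le 2\|P_Nu_0\|_s\}$ equipped with the $X^s_T$ distance.

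The only analytic input is a nonlinear estimate in $H^s$. Since $s>\tfrac{3}{2}=\tfrac d2$, the space $H^s(\T^3)$ is a Banach algebra, and more precisely one has a Moser-type composition bound $\|F(v)\|_s\le C(\|v\|_{L^\infty})\|v\|_s$ for $F$ smooth enough with $F(0)=0$, together with the companion Lipschitz bound $\big\||a|^{p-1}a-|b|^{p-1}b\big\|_s\le C\big(\max(\|a\|_s,\|b\|_s)\big)\|a-b\|_s$. Here one uses $p\ge 3$, which guarantees that $z\mapsto|z|^{p-1}z$ is regular enough at the origin for these bounds to hold in the relevant range of $s$ (for large $s$ this is where the admissible exponents are constrained, or where one invokes fractional composition results). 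Combining this with $\|P_N\|_{H^s\to H^s}=1$ and $\|S(t)\|_{H^s\to H^s}=1$, with constants independent of $N$, gives, for $u,v\in\mathcal{B}$ and using $\|P_Nu_0\|_s\le\|u_0\|_s\le R$ by \eqref{IneqEmbedding},
\begin{align*}
\|\Phi_N(u)\|_{X^s_T}&\le\|P_Nu_0\|_s+C\,T\,\|u\|_{X^s_T}^{p}\le\|P_Nu_0\|_s+C\,T\,(2R)^{p-1}\|u\|_{X^s_T},\\
\|\Phi_N(u)-\Phi_N(v)\|_{X^s_T}&\le C\,T\,(2R)^{p-1}\|u-v\|_{X^s_T},
\end{align*}
where $C=C(s,p)$ does not depend on $N$.

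Choosing $T=T(R,s)$ so small that $C\,T\,(2R)^{p-1}\le\tfrac12$ --- note this forces $T\sim R^{1-p}$, in agreement with the heuristic mentioned in the introduction --- makes $\Phi_N$ map $\mathcal{B}$ into itself and act as a $\tfrac12$-contraction there, so the Banach fixed point theorem produces a unique fixed point $u_N\in\mathcal{B}$; this is the desired solution, and the bound \eqref{control_local} holds by construction. Since $T$ depends only on $R$ and $s$ it is uniform in $N\in\N^*\cup\{\infty\}$, and because $S$ is a group the solution is obtained on the two-sided interval at once. Finally, uniqueness in all of $X^s_T$ (not merely in $\mathcal{B}$) follows from a routine Gr\"onwall/continuity argument: two $X^s_T$-solutions coincide on a maximal subinterval, and if its endpoint were interior the local uniqueness just established would be contradicted.

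The step I expect to demand the most care is the $H^s$ estimate for $|u|^{p-1}u$ when $p$ is not an odd integer: the map $z\mapsto|z|^{p-1}z$ is only finitely differentiable, so one must either keep $s$ in the range where $C^2$ (or $C^{\lceil s\rceil}$) regularity of the nonlinearity is enough, or appeal to fractional chain/composition rules. Everything else is standard: the uniformity in $N$ is immediate because $P_N$ is an orthogonal projection commuting with the linear flow, and the contraction mechanics are classical.
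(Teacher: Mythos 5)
Your proposal is correct and follows essentially the same route as the paper: a Banach fixed point argument on the Duhamel formulation in a ball of $X^s_T$, using the algebra/Moser property of $H^s$ for $s>\tfrac32$, the uniform bounds $\|P_N\|_{H^s\to H^s}\le 1$ and $\|S(t)\|_{H^s\to H^s}=1$ to get $T\sim R^{1-p}$ independent of $N$, with \eqref{control_local} then following from the contraction bound (the paper extracts it from the Duhamel formula after the fixed point, you build it into the choice of ball — an immaterial difference). Your closing caveat about the limited smoothness of $z\mapsto|z|^{p-1}z$ for non-odd-integer $p$ is a fair point that the paper's own proof passes over silently.
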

\begin{proof}
Fix $u_0$ in $B_R(H^s)$ and set the map
\begin{align*}
F(u)=S(t)P_Nu_0-\i\int_{0}^tS(t-\tau)P_N(|P_Nu|^{p-1}P_Nu)d\tau\quad C_tH^s\to C_tH^s,
\end{align*}
where $S(t)$ stands for the group $e^{\i t(\Delta-1)}.$\\
We see that an eventual fixed point of $F$ must belong to $E_N$ and be a solution to $\eqref{equations}, \eqref{datum}.$\\
Using the algebra structure of $X^s_T$, we have
\begin{align*}
\|F(u)\|_{X^s_T}\leq \|P_Nu_0\|_s+\int_0^T\|u\|_s^pd\tau\leq \|u_0\|_s+T\|u\|_{X^s_T}^p.
\end{align*}
For $T\leq \frac{1}{2^7R^{p-1}c}$ for some constant $c\geq1$, we have for all $u\in B_{2R}(X_{T}^s)$
\begin{align*}
\|F(u)\|_{X_{T}^s}\leq 2R,
\end{align*}
hence $F(u)\in B_{2R}(X_{T}^s).$ Now let $u_1$ and $u_2$ be two element of $B_{2R}(X_{T}^s)$, we have that
\begin{align*}
\|F(u_1)-F(u_2)\|_{X^s_T}\leq CT_R(\|u_1\|^{p-1}_{X^s_T}+\|u_2\|_{X^s_T}^{p-1})\|u_1-u_2\|_{X^s_T}.
\end{align*}
Taking $c=C\vee 1$ in the choice of $T$, we obtain
\begin{align*}
\|F(u_1)-F(u_2)\|_{X^s_T}\leq\frac{1}{2}\|u_1-u_2\|_{X^s_T}.
\end{align*} 
Therefore $F$ is a contraction on $B_{2R}(X_{T}^s)$ and we obtain the claimed existence and uniqueness.\\
Now, to see the last claim, let us observe that the constructed solution stay in $B_{2R}(X)$ for $|t|<T.$ Therefore, using the Duhamel formula, we have			
\begin{align*}
\|u\|_{X^s_T}\leq \|P_N u_0\|_s+\frac{1}{2}\|u\|_{X^s_T},\quad |t|<T.
\end{align*}		
This is \eqref{control_local}.
\end{proof}
\begin{rmq}
An important property of the local time existence $T$ in the Proposition \ref{propLWP} above is that it does not depend on $N.$
\end{rmq}
\subsection{Local uniform convergence}
\begin{nem}\label{LemUnifConv}
Let $s>\frac{3}{2}$, $R>0$ and $B_R:=B_R(H^s)$. Let $T:=T(s,R)$ be the associated (uniform) existence time for the problem \eqref{equations},\eqref{datum}, we have that for every $r<s$,
\begin{align*}
\sup_{u_0\in B_R}\|\phi^t(u_0)-\phi_N^t(P_Nu_0)\|_{X^r_{T}}\to 0,\ \ as\ N\to\infty.
\end{align*}
\end{nem}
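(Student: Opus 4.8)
The plan is to run a standard fixed-point difference estimate on the integral (Duhamel) formulation, exploiting the uniform-in-$N$ existence time already secured in Proposition \ref{propLWP}. Write $u = \phi^t(u_0)$ for the solution of \eqref{equations} with $N=\infty$ and $u_N = \phi_N^t(P_N u_0)$ for the Galerkin solution; both exist on $(-T,T)$ and satisfy the a priori bound \eqref{control_local}, so $\|u\|_{X^s_T}, \|u_N\|_{X^s_T} \le 2R$ (after enlarging $R$ to absorb $\|P_N u_0\|_s \le \|u_0\|_s \le R$). First I would subtract the two Duhamel formulas: the linear part contributes $S(t)(u_0 - P_N u_0)$, and the nonlinear part contributes $-\i\int_0^t S(t-\tau)\bigl[(|u|^{p-1}u) - P_N(|P_N u_N|^{p-1} P_N u_N)\bigr]\,d\tau$. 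The first term tends to $0$ in $X^r_T$ for any $r \le s$ (indeed in $X^s_T$) since $P_N u_0 \to u_0$ in $H^s$, uniformly over $u_0$ in the compact-by-tail-decay sense — more precisely, $\|u_0 - P_N u_0\|_s \to 0$ is \emph{not} uniform over all of $B_R$, which is exactly where the point $r<s$ enters: I would instead bound $\|u_0 - P_N u_0\|_r \le \lambda_N^{-(s-r)/2}\|u_0\|_s \le \lambda_N^{-(s-r)/2} R$, which \emph{is} uniform over $B_R$ and vanishes as $N\to\infty$ by the Weyl asymptotics $\lambda_N \sim N^{2/3}$.

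\textbf{Key steps.} For the nonlinear term I would split it as
\begin{align*}
(|u|^{p-1}u) - P_N(|P_N u_N|^{p-1} P_N u_N) = \bigl(I - P_N\bigr)(|u|^{p-1}u) + P_N\bigl[(|u|^{p-1}u) - (|P_N u_N|^{p-1} P_N u_N)\bigr],
\end{align*}
estimating the first piece by $\lambda_N^{-(s-r)/2}\||u|^{p-1}u\|_{s} \lesssim \lambda_N^{-(s-r)/2} \|u\|_s^p \lesssim \lambda_N^{-(s-r)/2} R^p$ using the algebra property of $H^s$ (valid since $s>3/2$), and for the second piece using the elementary inequality $\bigl||a|^{p-1}a - |b|^{p-1}b\bigr| \lesssim (|a|^{p-1}+|b|^{p-1})|a-b|$ together with the $H^r$ algebra (requiring $r>3/2$, which I may assume by taking $r$ in $(3/2,s)$ — the general case $r<s$ then follows from the embedding \eqref{IneqEmbedding}) to get a bound $\lesssim R^{p-1}\|u - P_N u_N\|_r \le R^{p-1}(\|u-u_N\|_r + \|u_N - P_N u_N\|_r)$. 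Since $u_N \in E_N$, the term $\|u_N - P_N u_N\|_r$ vanishes identically, leaving $\|u - u_N\|_r$. Collecting everything and writing $g(t) = \|u(t) - u_N(t)\|_r$, the difference estimate reads, for $|t| < T$,
\begin{align*}
g(t) \le C\lambda_N^{-(s-r)/2}\bigl(R + R^p\bigr) + C R^{p-1} \int_0^{|t|} g(\tau)\,d\tau,
\end{align*}
and with the same smallness condition on $T$ used in Proposition \ref{propLWP} (shrinking $T$ further if needed, which is harmless since $T$ depends only on $s,R$), the integral operator has norm $\le 1/2$, so $\|g\|_{C_t\R}$, i.e. $\sup_{u_0 \in B_R}\|\phi^t u_0 - \phi_N^t P_N u_0\|_{X^r_T}$, is bounded by $C(s,R)\lambda_N^{-(s-r)/2} \to 0$. (Alternatively one closes this with Gronwall rather than a fixed-point argument, which avoids re-shrinking $T$.)

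\textbf{Main obstacle.} The only genuine subtlety is the loss of uniformity: $P_N u_0 \to u_0$ in $H^s$ is \emph{not} uniform on $B_R(H^s)$, so one truly cannot get convergence in $X^s_T$ by this soft argument — this is precisely why the statement is phrased with $r<s$, and the mechanism is the quantitative gain $\|(I-P_N)w\|_r \le \lambda_N^{-(s-r)/2}\|w\|_s$. A secondary point to handle carefully is that the paper's algebra/product estimates for the nonlinearity $|v|^{p-1}v$ with non-integer $p$ require $p \ge 3$ and the relevant Sobolev index above $3/2$; these are exactly the standing hypotheses, so no new input is needed, but one should state the fractional-Leibniz/composition estimate being invoked. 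Everything else is routine contraction-mapping bookkeeping inherited from the proof of Proposition \ref{propLWP}.
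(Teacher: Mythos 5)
Your proposal is correct and follows essentially the same route as the paper: subtract the two Duhamel formulas, absorb the tail $(I-P_N)$ terms with the quantitative gain $\lambda_N^{-(s-r)/2}$ coming from $r<s$, bound the nonlinear difference by a tame/Moser estimate using the uniform $H^s$ (hence $L^\infty$) bound $2R$ from Proposition \ref{propLWP}, and close with Gronwall on the uniform time interval. The only (harmless) cosmetic difference is your reduction to $r\in(\tfrac{3}{2},s)$ with the general case recovered via the embedding \eqref{IneqEmbedding}, whereas the paper estimates directly in $H^r$ with $L^\infty$ prefactors.
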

\begin{proof}
Let us write the Duhamel formulas of $\phi^t(u_0)$ and $\phi_N^t(u_0)$:
\begin{align*}
\phi^t(u_0) &=S(t)u_0 -\i\int_0^tS(t-\tau)|\phi^{\tau}(u_0)|^{p-1}\phi^{\tau}(u_0)d\tau, \\
\phi_N^t(P_Nu_0) &=S(t)P_Nu_0-\i\int_0^tS(t-\tau)P_N(|\phi_N^{\tau}(P_Nu_0)|^{p-1}\phi_N^{\tau}(P_Nu_0)).
\end{align*}
Taking the difference between the two equations above and using the decomposition $f=P_Nf+f-P_Nf$, we obtain for any $t\in [0,T_R)$ that
\begin{align*}
\phi^t(u_0)-\phi_N^t(P_Nu_0) =S(t)(u_0-P_Nu_0) &-\i\int_0^tS(t-\tau)\left(P_N(|\phi^{\tau}(u_0)|^{p-1}\phi^{\tau}(u_0)-|\phi_N^{\tau}(P_Nu_0)|^{p-1}\phi_N^{\tau}(P_Nu_0))\right)d\tau\\
&-\i\int_0^tS(t-\tau)\left(|\phi^{\tau}(u_0)|^{p-1}\phi^{\tau}(u_0)-P_N(|\phi^{\tau}(u_0)|^{p-1}\phi^{\tau}(u_0))\right)d\tau.
\end{align*}
Now we use the fact that $\|P_Nf\|_r\leq\|f\|_r$ and $\|S(t)\|_{H^r\to H^r}\leq 1,$ to obtain
\begin{align*}
\|\phi^t(u_0)-\phi_N^t(P_Nu_0)\|_s\leq \|(1-P_N)u_0\|_r &+\int_0^t\||\phi^{\tau}(u_0)|^{p-1}\phi^{\tau}(u_0)-|\phi_N^{\tau}(P_Nu_0)|^{p-1}\phi_N^{\tau}(P_Nu_0)\|_r\\
&+\int_0^t\|(1-P_N)(|\phi^{\tau}(u_0)|^{p-1}\phi^{\tau}(u_0))\|_rd\tau.
\end{align*}
Now using the algebra structure of $H^s$ and the fact that, on $[0,T_R)$ we have  $\|\phi^\tau u_0\|_{L^\infty},\|\phi_N^\tau u_0\|_{L^\infty}\leq const(R),$ we obtain
\begin{align*}
\||\phi^{\tau}(u_0)|^6\phi^{\tau}(u_0)-|\phi_N^{\tau}(P_Nu_0)|^{p-1}\phi_N^{\tau}(P_Nu_0)\|_r &\leq C(\|\phi^{\tau}(u_0)\|^{p-1}_{L^\infty}+\|\phi_N^{\tau}(P_Nu_0)\|^{p-1}_{L^\infty})\|\phi^{\tau}(u_0)-\phi_N^{\tau}(P_Nu_0)\|_r\\
&\leq C(s,R)\|\phi^{\tau}(u_0)-\phi_N^{\tau}(P_Nu_0)\|_r.
\end{align*}
Remark that for $r<s$ and $f\in H^s,$ we have
\begin{align*}
\|(1-P_N)f\|_r\leq (1+\lambda_N)^{\frac{r-s}{2}}\|(1-P_N)f\|_s\leq (1+\lambda_N)^{\frac{r-s}{2}}\|f\|_s.
\end{align*}
We use the Gronwall lemma to get
\begin{align*}
\|\phi^t(u_0)-\phi_N^t(P_Nu_0)\|_r\leq(1+\lambda_N)^{\frac{r-s}{2}}e^{tC_1(s,R)}\left(\|u_0\|_s+C_2(s,R)\right).
\end{align*}
Whence follows
\begin{align*}
\sup_{u_0\in B_R}\|\phi^t(u_0)-\phi_N^t(P_Nu_0)\|_{X^r_{T}}\leq(1+\lambda_N)^{\frac{r-s}{2}}C_3(s,R).
\end{align*}
We finish the proof by recalling that $r<s$ and letting $N$ go to $\infty$.
\end{proof}

\paragraph*{A sufficient condition of globalization.}\label{subsectglobalizationcondition}
Now let us remark the following a priori bound
\begin{align*}
\|\phi^t u_0\|_{s}\leq e^{C\int_{0}^t\|\phi^\tau u_0\|_{L^\infty}^{p-1}d\tau}\|u_0\|_s.
\end{align*}
Then, if for some initial datum $u_0\in H^s$ we have that 
\begin{align}
\int_{0}^t\|\phi^\tau u_0\|_{L^\infty}^{p-1}d\tau<\infty\ \ \ for\ any\ t>0,
\end{align}
 then the solution $\phi^tu_0$ is global in time on $H^s$. 
\section{Fluctuation-dissipation for the approximating equations}
\label{ASection3flucdiss}
In this section we consider fluctuation-dissipation based on the Galerkin approximations of $\eqref{Equ_NLS7}$. We will prove that they are globally well-posed on the approximating spaces $E_N$, then we construct a sequence of stationary measures and derive uniform bounds.
 Also, using the projector $P_N$, we recall the notation $E_N=P_NL^2.$\\
The finite-dimensional property of $E^N$ makes all norms well-defined on it to be equivalent. Therefore, unless we need uniformity for an estimate, we may work only with the $L^2-$norm and the result will be automatically valid for the other.

 Set the initial value problem
\begin{align}
\dt u &=\i[(\Delta-1) u-P_N(|u|^{p-1}u)]-\alpha[(1-\Delta)^{s-1}+e^{\rho(\|u\|_{s-})}]u +\sqrt{\al}\eta_N \label{eqN}\\
u|_{t=0} &=w\in E_N. \label{IncondN}
\end{align} 
Here $\rho:\R_+\to \R_+$ satisfies
\begin{equation}\label{Hyprho}
C(\rho,r)e^{\rho(x)}\geq x^r\quad \text{for any $r>1$,}
\end{equation} 
for some constant $C(\rho,r)$ depending only on $\rho$ and $r$ (we can think $\rho$ as a suitable convex function as it will be taken later). 
\subsection{Dissipation rates of the mass and the energy}
In the equation \eqref{eqN}, the mass and the energy given by
\begin{align*}
M(u) &=\frac{1}{2}\int_{\T^3}|u(x)|^2dx,\\
E(u) &=\int_{\T^3}\left(\frac{1}{2}|\nabla u(x)|^2+\frac{1}{2}| u(x)|^2+\frac{1}{p+1}|u(x)|^{p+1}\right)dx.
\end{align*}
interact with  the damping term $\al[(1-\Delta)^{s-1}+e^{\rho(\|u\|_{s-})}]u$. \\
Let us remark that, in this interaction, the quantity $e^{\rho(\|u\|_{s-})}$ is a constant in $x$ and does not "participate" to any integration w.r.t. the $x$ variable. Namely, for a functional $G(u)=\int_{\T^3}g(u)dx$, we have, formally, that
\begin{align*}
G'(u,e^{\rho(\|u\|_{s-})}h(u))=\int_{\T^3}g'(u)e^{\rho(\|u\|_{s-})}h(u)dx=e^{\rho(\|u\|_{s-})}G'(u,h(u)).
\end{align*}

The resulting dissipation rates are formally given by 
\begin{align*}
\al\mathcal{M}(u):=M'(u,\al[(1-\Delta)^{s-1}+e^{\rho(\|u\|_{s-})}] u)=\al[M'(u,(1-\Delta)^{s-1} u)+e^{\rho(\|u\|_{s-})}M'(u,u)]
\end{align*}
 and 
\begin{align*}
\al\mathcal{E}(u):=E'(u,\al[(1-\Delta)^{s-1}+e^{\rho(\|u\|_{s-})}]u)=\al[E'(u,(1-\Delta)^{s-1} u)+e^{\rho(\|u\|_{s-})}E'(u,u)],
\end{align*} 
  respectively. These quantity are well-defined for regular enough solutions. Here, we give some useful properties concerning them. Let us, first, observe that
\begin{align}
M'(u;v) &=(u,v),\\
E'(u;v) &=(-\Delta u+u+|u|^{p-1}u,v).
\end{align}
 We have, using that $P_Nu=u,$ that
\begin{align}
\mathcal{M}(u)= M'(u,[(1-\Delta)^{s-1}+e^{\rho(\|u\|_{s-})}] u) &=\|u\|_{s-1}^2+e^{\rho(\|u\|_{s-}^2)}\|u\|^2.\label{DefmathcalM}
\end{align}
Also, 
\begin{align} 
E'(u,[(1-\Delta)^{s-1}+e^{\rho(\|u\|_{s-})}]u) &=((1-\Delta) u+|u|^{p-1}u,(1-\Delta)^{s-1} u)+e^{\rho(\|u\|_{s-})}((1-\Delta)u+|u|^{p-1}u,u)\nonumber\\
&=\|u\|_s^2+(|u|^{p-1}u,(1-\Delta)^{s-1}u)+e^{\rho(\|u\|_{s-})}(\|u\|_1^2+\|u\|_{L^{p+1}}^{p+1}).\nonumber
\end{align}
For $s=2$, we use an integration by parts to find that
\begin{align}
(|u|^{p-1}u,(1-\Delta)^{s-1}u)\geq 0.
\end{align}
Now, for $s>2,$ using the Agmon's inequality, 
\begin{align*}
|((1-\Delta)^{s-1} u,|u|^{p-1}u)| &= |((1-\Delta)^{\frac{s-1}{2}}u,(1-\Delta)^{\frac{s-1}{2}}|u|^{p-1}u)|\leq C_s\|u\|_{s-1}^2\|u\|_{L^\infty}^{p-1}\leq C_s\|u\|_{s-1}^2\|u\|^{\frac{p-1}{4}}\|u\|_{2}^{\frac{3(p-1)}{4}}.
\end{align*}
Since $s>2$,  we can always find, by performing a Young inequality and the properties of $\rho$ (see \eqref{Hyprho}), a constant $K=K(s,p,\rho)>0$ such that
\begin{align*}
C_s\|u\|_{s-1}^2\|u\|^{\frac{p-1}{4}}\|u\|_{2}^{\frac{3(p-1)}{4}}\leq K+\frac{1}{2}\|u\|^2e^{\rho(\|u\|_{s-})}.
\end{align*}
Overall, one obtains, for all $s\geq 2,$ that
\begin{align}
\mathcal{E}(u)=E'(u,[(1-\Delta)^{s-1}+e^{\rho(\|u\|_{s-})}] u)\geq \|u\|_s^2+\left(\frac{\|u\|_1^2}{2}+\|u\|_{L^{p+1}}^{p+1}\right)e^{\rho(\|u\|_{s-})}-K. \label{DefmathcalE}
\end{align}

\subsection{Globlal well-posedness for the fluctuation-dissipation problems on $E_N$}
Let us introduce the following definition.
\begin{defi}\label{DefGWP}
Let $N\geq 2$. The equation $(\ref{eqN})$ is said to be stochastically globally well-posed on $E_N$ if for all  the following properties hold
\begin{enumerate}
\item  for any random variable $u_0$ in $E_N$ which is independent of $\mathcal{F}_t,$ we have, for almost all $\omega\in\Omega$,
\begin{enumerate}
\item (Existence) there exists $u\in C(\R_+,E_N)$ satisfying \eqref{eqN} and \eqref{IncondN} in which $u_0$ is remplaced by $u_0(\omega).$ We denote the solution by $u^\omega(t,u_0).$
\item (Uniqueness) if $u_1,u_2\in C(\R_+,E_N)$ are two solutions starting at $u_0$ then $u_1\equiv u_2.$ 
\end{enumerate}
\item (Continuity w.r.t. initial data) for almost all $\omega,$ we have
\begin{equation}
\lim_{u_{0}\to u_{0}'}u^\omega(\cdot,u_0)=u^\omega(\cdot,u_0') \ \ \ \text{in $C_tE_N$},
\end{equation}
where $u_0$ and $u_0'$ are deterministic data in $H^s$;
\item the process $(\omega,t)\mapsto u^\omega(t)$ is adapted to the filtration $\sigma(u_0,\mathcal{F}_t)$.
\end{enumerate}
\end{defi}

We claim that the problem \eqref{eqN}, \eqref{IncondN} is stochastically globally well-posed on $E_N$ in the sense of Definition \ref{DefGWP}. The proof of this fact is rather classical and is going to be presented here following the few steps below.
\begin{enumerate}
\item {\bfseries Existence of a global solution.}
Consider the stochastic convolution
\begin{align*}
z(t):=z_\al=\sqrt{\al}\int_0^te^{\i(t-s)(\Delta-1)-\al(1-\Delta)^{s-1}}d\zeta_N(s),
\end{align*} 
this is a well-defined for $\P$-almost all $\omega\in\Omega$; we see that $z$ is the unique solution of the equation
\begin{align}
d z=[\i(\Delta -1)-\al(1-\Delta)^{s-1}]zdt+\sqrt{\al}d\zeta_N,\quad z|_{t=0}=0.\label{eqNz}
\end{align}
We see without difficulties that, for $\P$-almost all $\omega\in\Omega$, $z$ belongs in $C_t(\R_+,C^\infty(E_N))$ (one can apply the It\^o formula to derivatives of \eqref{eqNz}). 

Now any $\omega \in \Omega$ such that $z^\omega$ belongs to $C_t(\R_+,C^\infty(E_N))$, we set the problem
\begin{align}
\dt v &=\i[(\Delta-1) v-P_N|v+z|^{p-1}(v+z)]-\al[(\Delta-1)^{s-1}v+e^{\rho(\|v+z\|_{s-})}(v+z)],\label{eqNv}\\
 v|_{t=0} &=u_0\in E_N.\nonumber
\end{align}
Since the map $E_N\to E_N:\ \ v\mapsto \i[(\Delta-1) v-P_N|v+z|^{p-1}(v+z)]-\al[(\Delta-1)^{s-1}v+e^{\rho(\|v+z\|_{s-})}(v+z)]$ is smooth, thanks to the classical Cauchy-Lipschitz theorem, the problem \eqref{eqNv} has a local in time smooth solution. We see that this solution is in fact global in time by using the Proposition \ref{prop_control_M_E}  below. Now, observe that the sum $v+z$ is a solution to  \eqref{eqN}, \eqref{IncondN}. Also, $u\in C_t(\R_+,C^\infty)$ since $z\in C_t(\R_+,C^\infty)$ and $F\in C^\infty(E_N,E_N).$
\begin{prop}\label{prop_control_M_E}
The local solution $v$ constructed above exists globally in time, $\P-$almost surely.
\end{prop}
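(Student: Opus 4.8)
The plan is to show that the locally-defined smooth solution $v$ of \eqref{eqNv} does not blow up in finite time by deriving an a priori bound on $\|v(t)\|_{E_N}$, equivalently on $\|v(t)\|$, that is valid $\P$-almost surely and is uniform on bounded time intervals. Since $v = u - z$ with $z\in C_t(\R_+,C^\infty(E_N))$ known and locally bounded for a.e.\ $\omega$, it suffices to bound $u = v+z$, and the natural object to control is the mass $M(u) = \tfrac12\|u\|^2$, because on the finite-dimensional space $E_N$ all Sobolev norms are equivalent (with $N$-dependent constants, which is fine here) and boundedness of $\|u\|$ on $[0,T]$ forbids blow-up.

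The key computation is an application of the It\^o formula to $M(u)$ along \eqref{eqN}. The Hamiltonian part $\i[(\Delta-1)u - P_N(|u|^{p-1}u)]$ contributes nothing to $\tfrac{d}{dt}M(u)$ since $(u,\i w)=0$ for any $w$ and $P_Nu=u$. The dissipation contributes $-\alpha\mathcal{M}(u) = -\alpha\big(\|u\|_{s-1}^2 + e^{\rho(\|u\|_{s-}^2)}\|u\|^2\big) \le 0$ by \eqref{DefmathcalM}; this is the crucial sign. The noise term $\sqrt{\alpha}\,d\zeta_N$ contributes a martingale part $\sqrt{\alpha}\sum_{|m|\le N}a_m(u_m\,d\beta_m)$-type term plus the It\^o correction $\tfrac12\alpha\sum_{|m|\le N}|a_m|^2 = \tfrac12\alpha A_{0,N}\,dt$, a deterministic constant. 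Thus
\begin{align*}
dM(u) \le \tfrac12\alpha A_{0,N}\,dt + \sqrt{\alpha}\,dN_t,
\end{align*}
where $N_t$ is a continuous local martingale. Integrating and taking expectations (after a standard localization/stopping-time argument to handle the local martingale) gives $\E M(u(t\wedge\tau_k)) \le M(u_0) + \tfrac12\alpha A_{0,N}t$, and then a further argument — either Doob's maximal inequality applied to $N_t$ together with a Burkholder-Davis-Gundy estimate on its quadratic variation $\int_0^t \alpha\sum |a_m|^2|u_m|^2\,ds \lesssim \alpha A_{0,N}\int_0^t M(u)\,ds$, combined with Gronwall — upgrades this to an almost sure bound: $\sup_{t\in[0,T]} M(u(t)) < \infty$ for a.e.\ $\omega$. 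Hence the maximal existence time is $+\infty$, $\P$-a.s.

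I would organize it as: (i) fix $\omega$ in the full-measure set where $z^\omega\in C_t(\R_+,C^\infty(E_N))$; (ii) let $[0,T^*(\omega))$ be the maximal interval of existence of the smooth solution $v$ from Cauchy-Lipschitz, and suppose for contradiction $T^*<\infty$; (iii) define stopping times $\tau_k = \inf\{t: \|u(t)\|\ge k\}$ and run the It\^o/Gronwall estimate above on $[0,\tau_k\wedge T]$ for $T<T^*$ to get a bound on $\E\sup_{[0,\tau_k\wedge T]}\|u\|^2$ independent of $k$; (iv) conclude $\tau_k\to\infty$ a.s., so $\|u\|$ — and hence every $\|u\|_\sigma$ on $E_N$ — stays bounded on $[0,T^*]$, contradicting maximality via the standard blow-up alternative. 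The main obstacle, and the only place needing care, is the passage from the in-expectation bound to the pathwise (a.s.) bound on $\sup_{[0,T]}\|u\|$: one must control the local martingale $N_t$ uniformly in $t$, which is handled cleanly by BDG plus the fact that its quadratic variation is dominated by $\alpha A_{0,N}\int_0^t\|u\|^2\,ds$, closing a Gronwall loop; alternatively, since $M(u)+ $ (a supermartingale correction) is essentially a supermartingale up to the deterministic drift $\tfrac12\alpha A_{0,N}t$, one may simply invoke that nonnegative supermartingales do not explode. Everything else is routine given the equivalence of norms on $E_N$ and the smoothness of the vector field. The energy estimate \eqref{DefmathcalE} is not needed for mere global existence but would be recorded in parallel since it is used later for the stationary-measure bounds.
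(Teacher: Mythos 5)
Your argument is correct in outline, but it takes a genuinely different route from the paper. The paper never applies It\^o's formula at this stage: it works pathwise on the $v$-equation \eqref{eqNv} for a fixed $\omega$ with $z^\omega$ continuous, computes $\frac{d}{dt}\frac{\|v\|^2}{2}$ as a deterministic differential inequality, and there the Hamiltonian cancellation is \emph{broken} by the shift: the term $(v,\i P_N(|v+z|^{p-1}(v+z)))$ reduces to $-(z,\i|v+z|^{p-1}(v+z))$, which is then absorbed by the superexponential damping via hypothesis \eqref{Hyprho} ($\|v+z\|^{2p}\lesssim e^{\rho(\|v+z\|_{s-})}$), followed by a sign dichotomy on $\|v\|$ versus $C(\rho,p)+\sup_{[0,T]}\|z\|$. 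You instead go back to $u=v+z$ solving \eqref{eqN}, use the exact cancellation of the Hamiltonian part in $dM(u)$, the nonnegativity of $\mathcal{M}(u)$ from \eqref{DefmathcalM}, and a stopping-time/non-explosion argument (your BDG--Gronwall step can even be shortened: Markov's inequality at $\tau_k$, i.e.\ $\tfrac{k^2}{2}\P(\tau_k\le T)\le \E M(u_0)+\tfrac{\alpha}{2}A_{0,N}T$, already gives $\tau_k\to\infty$ a.s., which is Khasminskii's test). What each buys: your route needs no property of $\rho$ at all beyond the sign of the dissipation, and it essentially pre-runs the computation of Proposition \ref{prop_est_prob_Mass}; but it requires the local solution to be adapted so that the stochastic integral and It\^o formula are legitimate (the paper only records adaptedness afterwards, though it does follow from the fixed-point construction), and it needs integrability of $M(u_0)$, so for a general random datum you must condition on $u_0$ or localize on $\{\|u_0\|\le R\}$ --- a point you should state explicitly. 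The paper's pathwise estimate avoids all stochastic-calculus technicalities and moment assumptions and yields, for each admissible $\omega$, an explicit bound $\sup_{[0,T]}\|v\|^2\le\|u_0\|^2+C_\alpha(\omega,T)$, at the price of invoking \eqref{Hyprho}, which is in any case central to the rest of the paper.
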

\begin{proof}
Let us compute the derivative of $\frac{\|v\|^2}{2}$ and use the equation \eqref{eqNv} and \eqref{Hyprho}, we obtain
\begin{align*}
\frac{d}{dt}\left[\frac{\|v\|^2}{2}\right] &=-(z,\i|v+z|^{p-1}(v+z))-\al[\|v\|_{s-1}^2+\|v\|^2e^{\rho(\|v+z\|_{s-})}]+\alpha(v,z)e^{\rho(\|v+z\|_{s-})}\\
&\leq \frac{\|z\|^2}{2\al}+\frac{\al\|v+z\|^{2p}}{2}-\al\|v\|^2e^{\rho(\|v+z\|_{s-})}+\al\left[\frac{\|v\|^2}{2}+\frac{\|z\|^2}{2}\right]e^{\rho(\|v+z\|_{s-})}\\
&\leq \frac{\|z\|^2}{2\al}+\frac{\al}{2}\left[C(\rho,p)+\|z\|^2-\|v\|^2\right]e^{\rho(\|v+z\|_{s-})}.
\end{align*} 
Now we have that for $\P-$almost all $\omega\in\Omega$ for all $T$, there is a constant $C_\al(\omega,T)$ such that 
\begin{align}
\sup_{t\in[0,T]}\|z(\omega,t)\|\leq C_\al(\omega,T).\label{Controlonz}
\end{align}
Now, for a fixed $\omega$ such that \eqref{Controlonz} holds, fix any $T>0$. Let $t\in[0,T]$, we have the following two complementary scenarios:
\begin{enumerate}
\item either $\|v(\omega,t)\| \leq C(\rho,p)+C_\al(\omega,T)$
\item or $\|v(\omega,t)\| > C(\rho,p)+C_\al(\omega,T)$. In this case $\left[C(\rho,p)+\|z\|^2-\|v\|^2\right]e^{\rho(\|v+z\|_{s-})}<0$ and
\begin{align*}
\frac{d}{dt}\left[\frac{\|v\|^2}{2}\right]\leq \frac{\|z\|^2}{2\al}\leq C^1_\al(\omega,T).
\end{align*}
\end{enumerate}
Therefore
\begin{align*}
\frac{d}{dt}\left[\frac{\|v\|^2}{2}\right]\leq C^2_\alpha(\omega,T)\quad\text{for any $t\in[0,T]$}.
\end{align*}
Overall
\begin{align*}
\sup_{t\in[0,T]}\|v\|^2\leq \|u_0\|^2+C^3_\alpha(\omega,T)\quad\text{for any $T>0$}.
\end{align*}
In conclusion, we obtain the claim by using an iteration argument.
\end{proof}

\item {\bfseries Uniqueness and continuity.}
For a fixed $\omega\in \Omega,$ let $u_i,\ i=1,2$ two solutions to \eqref{eqN} starting at $u_{0,i},\ i=1,2,$ respectively. Let $w:=u_1-u_2$, and $F(u)=\i[(\Delta-1) u-P_N(|u|^6u)]-\alpha[(1-\Delta)+e^{\rho(\|u\|_{s-})}]u$, $F$ is clearly in $C^\infty(E^N\to E^N).$ Using the difference of the corresponding two equations, we see readily that $w$ satisfies the equation
\begin{align*}
\dt w=F(u_1)-F(u_2)=w\int_0^1F'(su_1+(1-s)u_2)ds.
\end{align*}
Taking the inner product with $w$, we have
\begin{align*}
\dt\|w\|^2\leq \|w\|^2\sup_{x\in\T^3}\int_0^1|F'(su_1+(1-s)u_2)|ds.
\end{align*}
Using the Gronwall lemma, we obtain
\begin{align*}
\|u_1(t)-u_2(t)\|\leq \|u_{0,1}-u_{0,2}\|\exp\left(\frac{t}{2}\sup_{x\in\T^3,t\in[0,T]}\int_0^1|F'(su_1+(1-s)u_2)|ds\right).
\end{align*}
This estimate implies uniqueness  in $C_tL^2$, as well as continuity with respect to the initial datum in any Sobolev type norm of $E_N$, because of the finite-dimensionality.
\item {\bfseries Adaptation.} It is clear that $z$ is adapted to $\mathcal{F}_t$, since $v$ is constructed by a fixed point argument, then it is adapted to $\sigma(u_0,\mathcal{F}_t).$ We obtain the claim.
\end{enumerate}
Let us denote by $u_\al(t,u_0)$ the unique solution to \eqref{eqN}, \eqref{IncondN}. 
\subsection{Stationary solutions and uniform estimates}
\subsubsection{A Markov framework.}
Let us define the transition probability
\begin{align*}
T_{\al,N}^t(w,\Gamma)=\P(u_\al(t,P_Nw)\in\Gamma)\quad w\in L^2,\ \ \Gamma\in \text{Bor}(L^2),\ \ t\geq 0,
\end{align*}
and  define the Markov semi-groups
\begin{align*}
\mathfrak{P}_{\al,N}^{t}f(v)&=\int_{L^2}f(w)T_{\al,N}^t(v,dw)\ \ \ L^\infty(L^2;\R)\to L^\infty(L^2;\R),\\
\mathfrak{P}_{\al,N}^{t*}\lambda(\Gamma)&=\int_{L^2}\lambda(dw)T_{\al,N}^t(P_Nw,\Gamma)\ \ \  \mathfrak{p}(L^2)\to \mathfrak{p}(L^2).
\end{align*}
Since the solution $u(t,u_0)$ is continuous in $u_0$, the Markov semi-group $\mathfrak{P}_{\al,N}^{t}$ is Feller: for any $t\geq 0,\ \mathfrak{P}_{\al,N}^{t}C_b(L^2)\subset C_b(L^2).$ Hence we can consider it as acting on this space.
\subsubsection{Statistical estimates of the flow.}
Set the truncated constants
\begin{align}
A_{s,N}=\sum_{|m|\leq N}\lambda_m^s|a_m|^2.\label{ConstantAsN}
\end{align}
Of course, these constants are bounded respectively by
\begin{align}
A_s=\sum_{m\in\N}\lambda_m^s|a_m|^2,\label{ConstantAs}
\end{align}
that we assume to be finite for $s=1.$ Also we have then the obvious convergence $A_{s,N}\to A_s$, as $N\to\infty$ for $s\leq 1.$
\begin{prop}\label{prop_est_prob_Mass}
Let $u_0$ be a random variable in $E_N$ independent  of $\mathcal{F}_t$ such that $\E M(u_0)\pp \infty.$ Let $u$ be the solution to \eqref{eqN} starting at $u_0.$
Then we have
\begin{align}
\E M(u)+\alpha\int_0^t\E\mathcal{M}(u)d\tau &=\E M(u_0)+\frac{\alpha A_{0,N}}{2}t.\label{est_esp_L2}
\end{align}
\end{prop}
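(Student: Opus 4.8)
The plan is to apply the It\^o formula to the functional $M(u) = \tfrac12\|u\|^2$ along the solution $u = u_\al(t,u_0)$ of the finite-dimensional SDE \eqref{eqN}, and then take expectations, using that the stochastic integral is a martingale with zero mean. Since we work on $E_N$, everything is finite-dimensional and smooth, so there are no regularity obstructions: the solution is a genuine It\^o process with values in $E_N$, and $M$ is a smooth quadratic functional on $E_N$.

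Concretely, I would write $dM(u) = M'(u;du) + \tfrac12 M''(u;\,d\langle u\rangle)$. The drift of \eqref{eqN} is $\i[(\Delta-1)u - P_N(|u|^{p-1}u)] - \al[(1-\Delta)^{s-1} + e^{\rho(\|u\|_{s-})}]u$, and $M'(u;v) = (u,v)$. The dispersive part contributes $(u,\i[(\Delta-1)u - P_N(|u|^{p-1}u)])$, which vanishes: indeed $(u,\i w) = 0$ for any real inner product of the stated form (using $(u,\i u)=0$ together with the self-adjointness of $\Delta-1$ and of $P_N$ on $E_N$, and $P_Nu = u$, so that $(u,\i P_N(|u|^{p-1}u)) = (\i u, |u|^{p-1}u)$ which still pairs against $\i u$ — more carefully, $(u, \i g) = \Re\!\int u\,\overline{\i g} = \Re\!\int (-\i) u\bar g = -\Im\!\int u\bar g$, and one checks the Hamiltonian terms cancel by the same computation that gives conservation of mass for \eqref{Equ_NLS7}). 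The damping part contributes $-\al(u,[(1-\Delta)^{s-1} + e^{\rho(\|u\|_{s-})}]u) = -\al\mathcal{M}(u)$ by \eqref{DefmathcalM}. The It\^o correction term from the noise $\sqrt{\al}\,d\zeta_N$ with $\zeta_N = \sum_{|m|\le N} a_m\beta_m(t) e_m$ is $\tfrac12 \al \sum_{|m|\le N} |a_m|^2 \|e_m\|^2 \cdot \tfrac12$-type bookkeeping; since the $e_m$ are $L^2$-normalized and $M$ has Hessian equal to the identity (times $\tfrac12$), this correction is $\tfrac{\al}{2} A_{0,N}$, where $A_{0,N} = \sum_{|m|\le N}|a_m|^2$ as in \eqref{ConstantAsN}. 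The martingale part is $\sqrt{\al}\,(u, d\zeta_N)$.

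Thus, in integrated form,
\begin{align*}
M(u(t)) + \al\int_0^t \mathcal{M}(u(\tau))\,d\tau = M(u_0) + \frac{\al A_{0,N}}{2} t + \sqrt{\al}\int_0^t (u(\tau), d\zeta_N(\tau)).
\end{align*}
Taking expectations kills the last term provided $\E\int_0^t \|u(\tau)\|^2\,d\tau < \infty$, which follows from the pathwise a priori bound proved in Proposition \ref{prop_control_M_E} together with $\E M(u_0) < \infty$ (one can first localize by a stopping time $\tau_R = \inf\{t : \|u(t)\| \ge R\}$, apply the identity on $[0, t\wedge\tau_R]$ where the stochastic integral is a true martingale, and then pass $R\to\infty$ by monotone/dominated convergence, using $\mathcal{M}(u)\ge 0$ so that Fatou applies on the left side and the global existence from Proposition \ref{prop_control_M_E} ensures $\tau_R\uparrow\infty$). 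This yields \eqref{est_esp_L2}.

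The only genuinely delicate point is the justification of the vanishing of the martingale term, i.e.\ the integrability needed to drop the stochastic integral after taking $\E$; the stopping-time localization handles this cleanly because $\mathcal{M}\ge 0$ makes the left-hand side monotone in the truncation. The algebraic computation of the drift (the cancellation of the Hamiltonian terms and the identification of the damping contribution with $\mathcal{M}$) is routine given the formulas already recorded in \eqref{DefmathcalM}, and the It\^o correction constant $\tfrac{\al}{2}A_{0,N}$ is immediate from the $L^2$-normalization of the $e_m$. Everything else is finite-dimensional It\^o calculus with a smooth quadratic observable.
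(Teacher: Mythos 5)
Your proposal is correct and follows essentially the same route as the paper: apply the finite-dimensional It\^o formula to $M(u)$, use that the Hamiltonian part of the drift is annihilated by $M'(u;\cdot)$ and that the damping contributes $-\al\mathcal{M}(u)$ via \eqref{DefmathcalM}, identify the It\^o correction $\tfrac{\al}{2}A_{0,N}$ from $M''(u;e_m,e_m)=\|e_m\|^2=1$, and take expectations to kill the martingale term. Your stopping-time localization is a slightly more careful justification of that last step than the paper gives, but it is the same argument.
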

\begin{proof}
We apply the finite-dimensional It\^o formula  to the functional $M(u):$  
\begin{align*}
dM(u) &=M'(u,du)+\frac{\alpha}{2}\sum_{|m|\leq N} a_m^2M''(u;e_m,e_m)dt.
\end{align*}
Now, using the fact that $M'(u,\i(\Delta u-|u|^{p-1}u))=0$ and \eqref{DefmathcalM}, we have
\begin{align*}
M(u,du)&=-\al\mathcal{M}(u)dt+\sqrt{\al}\sum_{|m|\leq N}a_m(u,e_m)d\beta_m.
\end{align*}
On the other hand,
\begin{align*}
M''(u;e_m,e_m) &=\|e_m\|^2=1.
\end{align*}
Then, after integration in $t$ and taking the expectation, we arrive at the \eqref{est_esp_L2}. 
\end{proof}

\begin{prop}\label{prop_est_prob_Energy}
Let $u_0$ be a random variable in $E_N$ independent  of $\mathcal{F}_t$. Suppose that $\E E(u_0)<\infty$, then we have
\begin{align}
\E E(u)+\alpha\int_0^t\E\mathcal{E}(u)d\tau &\leq \E E(u_0)+\frac{\alpha}{2}\left(A_{1,N}t+A_{0,N}(2\pi)^{-3}\int_0^t\E\|u\|_{L^{p-1}}^{p-1}d\tau\right),\label{est_espEnergy}
\end{align}
where $u$ is the solution to \eqref{eqN} starting at $u_0.$
\end{prop}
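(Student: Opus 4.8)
The plan is to apply the finite-dimensional It\^o formula to the energy functional $E(u)$ along the flow of \eqref{eqN}, exactly as in the proof of Proposition \ref{prop_est_prob_Mass}, and then to discard the terms with a favorable sign. Writing $du = \i[(\Delta-1)u - P_N(|u|^{p-1}u)]dt - \al[(1-\Delta)^{s-1}+e^{\rho(\|u\|_{s-})}]u\,dt + \sqrt{\al}\,d\zeta_N$, the It\^o expansion reads
\begin{align*}
dE(u) = E'(u;du) + \frac{\al}{2}\sum_{|m|\leq N} a_m^2\, E''(u;e_m,e_m)\,dt.
\end{align*}
The first key point is that the Hamiltonian part is conservative: since $E$ is the conserved energy of \eqref{Equ_NLS7} and the Galerkin truncation $P_N$ is symmetric and commutes with $(1-\Delta)$, one has $E'(u;\i[(\Delta-1)u - P_N(|u|^{p-1}u)]) = 0$ for $u\in E_N$. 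Hence the drift contribution of $E'(u;du)$ reduces to $-\al\,\mathcal{E}(u)\,dt$ with $\mathcal{E}(u)=E'(u;[(1-\Delta)^{s-1}+e^{\rho(\|u\|_{s-})}]u)$, plus the martingale term $\sqrt{\al}\sum_{|m|\leq N} a_m (-\Delta u+u+|u|^{p-1}u, e_m)\,d\beta_m$, which has zero expectation once we check it is a genuine martingale (this holds because, by Proposition \ref{prop_control_M_E} and the finite-dimensionality of $E_N$, all moments of $\sup_{[0,t]}\|u\|_s$ are locally bounded, so the integrand is in $L^2(\Omega\times[0,t])$; a localization/stopping-time argument removes any a priori concern).

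The second key point is the evaluation of the It\^o correction term $\sum_{|m|\leq N} a_m^2 E''(u;e_m,e_m)$. Differentiating $E'(u;v)=(-\Delta u+u+|u|^{p-1}u,v)$ once more gives $E''(u;v,w) = (-\Delta v+v,w) + (\,D_u(|u|^{p-1}u)[v],\,w)$. Summing against $a_m^2$ over $|m|\leq N$: the $(-\Delta v+v,w)$ part produces $\sum_{|m|\leq N} a_m^2(1+\lambda_m) = A_{1,N}+A_{0,N}\cdot\#\{|m|\le N\}/\ldots$; more precisely, handling the pairing carefully (and recalling $e_{-m}=\i e_m$, so the real-bilinear trace over the pair $\{e_m,e_{-m}\}$ contributes $2\cdot\frac{a_m^2}{2}(1+\lambda_m)$ type terms) one gets the clean bound $A_{1,N}$ from the linear part. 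The nonlinear part, $\sum_{|m|\leq N} a_m^2 (D_u(|u|^{p-1}u)[e_m],e_m)$, is pointwise of the form $\int_{\T^3} |u|^{p-1}\big(\text{quadratic in the }e_m\big)$; since $\sum_{|m|\le N}|e_m(x)|^2 \le (2\pi)^{-3}\cdot\#\{|m|\le N\}$ is not what we want, instead we bound $\big|\sum a_m^2 (D_u(|u|^{p-1}u)[e_m],e_m)\big| \lesssim A_{0,N}\,\|\,|u|^{p-1}\|_{L^1}\,(2\pi)^{-3} = A_{0,N}(2\pi)^{-3}\|u\|_{L^{p-1}}^{p-1}$, using $|a_m|\le$ uniform bound absorbed into $A_{0,N}$ and the fact that $|e_m(x)|\le (2\pi)^{-3/2}$. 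This is exactly the term appearing on the right-hand side of \eqref{est_espEnergy}.

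Assembling: integrate in $t$, take expectations (the martingale term drops), and move $\E\big[\frac{\al}{2}(A_{1,N}t + A_{0,N}(2\pi)^{-3}\int_0^t\|u\|_{L^{p-1}}^{p-1}d\tau)\big]$ to the right, while keeping $\al\int_0^t \E\mathcal{E}(u)\,d\tau$ on the left with the correct sign (it is nonnegative up to the lower-order $\|u\|_{s-}^{p+1}$ correction recorded in \eqref{DefmathcalE}, but that subtlety is only used later; here the inequality \eqref{est_espEnergy} is stated as an inequality, so no sign issue arises at this stage). The finiteness of $\E E(u_0)$ is used precisely to start the argument. The main obstacle, and the step requiring genuine care rather than routine bookkeeping, is the precise computation of the It\^o correction: getting the constants $A_{1,N}$ and $A_{0,N}(2\pi)^{-3}$ exactly right requires tracking the real-inner-product convention, the pairing of $e_m$ with $e_{-m}=\i e_m$, and the $L^\infty$ bound $\|e_m\|_{L^\infty}\le (2\pi)^{-3/2}$ on the eigenfunctions; everything else follows the template of Proposition \ref{prop_est_prob_Mass}.
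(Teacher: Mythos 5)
Your proposal follows essentially the same route as the paper's proof: apply It\^o's formula to $E(u)$, use $E'(u;\i[(\Delta-1)u-P_N(|u|^{p-1}u)])=0$ so that the drift reduces to $-\al\,\mathcal{E}(u)$, take expectations to kill the martingale term, and bound the It\^o correction using $|e_m(x)|^2=(2\pi)^{-3}$ to obtain the $A_{0,N}(2\pi)^{-3}\|u\|_{L^{p-1}}^{p-1}$ term. It is correct, and your slight hesitation over the quadratic part of the Hessian (which gives $1+\lambda_m$, hence $A_{0,N}+A_{1,N}$, rather than the bare $A_{1,N}$) reflects the same bookkeeping the paper itself uses, so nothing of substance is missing.
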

\begin{proof}
 We apply the It\^o's formula to $E(u)$, and use the fact that $E'(u,\i(\Delta u-|u|^{p-1}u))=0$ and \eqref{DefmathcalE}, we obtain 
\begin{align*}
E(u)+\alpha \int_0^t\mathcal{E}(u)d\tau &\leq E(u_{0})+\frac{\alpha}{2}\left(A_{1,N}t+\sum_{|m|\leq N}a_m^2\int_0^t(|u|^{p-1};e_m,e_m)d\tau\right)\\
&+\sqrt{\alpha}\sum_{|m|\leq N}a_m\int_0^t\left(\lambda_m(u,e_m)+(|u|^{p-1}u,e_m)\right)d\beta_m(\tau).
\end{align*}
Taking the expectation, we obtain
\begin{align*}
\E E(u)+\alpha \E\int_0^t\mathcal{E}(u)d\tau &\leq \E E(u_{0})+\frac{\alpha}{2}\left(A_{1,N}t+\E\sum_{|m|\leq N}a_m^2\int_0^t(|u|^{p-1};e_m,e_m)d\tau\right)\\
&\leq \E E(u_{0})+\frac{\alpha}{2}\left(A_{1,N}t+A_{0,N}(2\pi)^{-3}\E\int_0^t\|u\|_{L^{p-1}}^{p-1}d\tau\right).
\end{align*}
The proof is finished.
\end{proof}

We can see without difficulties the following statement:
\begin{prop}
The solution $z_\al$ to \eqref{eqNz} satisfies the estimate
\begin{align}\label{estimeeL2puissancepZ}
\E\|z_\al(t)\|^{2p}\leq \alpha C(p,A_0,t),
\end{align}
where, $C(p,A_0,t)$ does not depend on $\alpha,$ and $p\geq 1$.
\end{prop}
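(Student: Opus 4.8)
The plan is to recall that the process $z_\al$ solving \eqref{eqNz} is a Gaussian process living in the finite-dimensional space $E_N$ — indeed it is an Ornstein--Uhlenbeck type process — and to extract from this Gaussianity the desired moment bound with the claimed $\al$-dependence. First I would write $z_\al$ explicitly through its Fourier coefficients: from \eqref{eqNz} and the definition of $\zeta_N$ in \eqref{formula_BM}, each coefficient $z_m(t) = \sqrt{\al}\, a_m \int_0^t e^{(\i(t-s)(\Delta-1) - \al(1-\Delta)^{s-1})} \, d\beta_m(s)$ reduces (on the eigenspace) to a scalar stochastic integral against $\beta_m$ with a deterministic, bounded-by-$1$ modulus integrand. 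The key observation is that $|e^{\i(t-s)(\lambda_m-1)}| = 1$ while $e^{-\al(1+\lambda_m)^{s-1}(t-s)} \leq 1$; hence by the It\^o isometry
\begin{align*}
\E |z_m(t)|^2 \leq \al\, |a_m|^2 \int_0^t e^{-2\al(1+\lambda_m)^{s-1}(t-s)}\, ds \leq \al\, |a_m|^2\, t,
\end{align*}
so that $\E \|z_\al(t)\|^2 = \sum_{|m|\leq N} \E|z_m(t)|^2 \leq \al\, A_{0,N}\, t \leq \al\, A_0\, t$.

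Next I would upgrade the second moment to the $2p$-th moment. Since $z_\al(t)$ is, for fixed $t$, a centered Gaussian vector in the finite-dimensional space $E_N$ (a linear functional of the Brownian motions $\beta_m$), all its moments are comparable to powers of its second moment; concretely, for a real-valued centered Gaussian $g$ one has $\E |g|^{2p} = c_p (\E g^2)^p$, and for a centered Gaussian vector in a Hilbert space one gets $\E \|z\|^{2p} \leq C_p (\E \|z\|^2)^p$ by, e.g., the Kahane--Khintchine inequality or simply by expanding $\|z\|^{2p} = (\sum_m |z_m|^2)^p$, using the multinomial theorem, and applying Wick's formula / Gaussian hypercontractivity coefficient-wise (this is where finite-dimensionality makes everything elementary and the constant depends only on $p$, not on $N$ or $\al$). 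Combining with the previous step,
\begin{align*}
\E \|z_\al(t)\|^{2p} \leq C_p \left(\E \|z_\al(t)\|^2\right)^p \leq C_p\, (\al\, A_0\, t)^p = \al^p\, C_p\, (A_0\, t)^p \leq \al\, C(p, A_0, t),
\end{align*}
where in the last inequality I use $\al^p \leq \al$ for $\al \in (0,1]$ (which is the regime of interest for the inviscid limit) and absorb everything else into $C(p,A_0,t) = C_p (A_0 t)^p$, a constant independent of $\al$. If one does not want to restrict to $\al \leq 1$, one keeps the bound in the form $\al^p C(p,A_0,t)$, which is what is actually needed later anyway and is stronger; the statement as written follows on $\al \le 1$.

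I do not expect a genuine obstacle here: the only point requiring a little care is justifying the passage from the SDE \eqref{eqNz} to the explicit Fourier representation and the applicability of the It\^o isometry, but this is routine because $E_N$ is finite-dimensional, the drift operator $\i(\Delta-1) - \al(1-\Delta)^{s-1}$ is a bounded normal operator on $E_N$ that is diagonal in the basis $(e_m)$, and the noise $\zeta_N$ has independent scalar components. The mildly delicate bookkeeping step is getting the constant in the Gaussian moment comparison to be genuinely independent of $N$; this is handled by noting that the comparison constant $C_p$ in $\E\|z\|^{2p} \le C_p(\E\|z\|^2)^p$ for Gaussian vectors in Hilbert space is dimension-free (it is the same $C_p$ as for $p$-th moments of a chi-type variable, via hypercontractivity of the Ornstein--Uhlenbeck semigroup, or via the comparison $\|z\|_{L^{2p}} \lesssim_p \|z\|_{L^2}$ valid on the first Wiener chaos).
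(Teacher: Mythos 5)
Your proof is correct. The paper in fact states this proposition without any proof (it is introduced by ``We can see without difficulties the following statement''), so there is no argument of the paper to compare against; your route --- writing $z_\al$ as the stochastic convolution, using that the semigroup generated by $\i(\Delta-1)-\al(1-\Delta)^{s-1}$ acts on each mode as a complex scalar of modulus at most $1$ so that the It\^o isometry gives $\E\|z_\al(t)\|^2\leq \al A_{0,N} t\leq \al A_0 t$, and then upgrading to the $2p$-th moment by the dimension-free first-chaos (Gaussian) moment comparison $\E\|z\|^{2p}\leq C_p(\E\|z\|^2)^p$ --- is exactly the standard way to justify the estimate, and every step is valid here because $E_N$ is finite-dimensional, the drift is complex-linear and diagonal in the basis $(e_m)$, and the real Brownian motions $\beta_m$ are independent (the pairing $e_{-m}=\i e_m$ causes no trouble since $(e_m,\i e_m)=0$ for the real inner product). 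The only point worth flagging is the one you already address: your bound is $\al^p C_p (A_0 t)^p$, which gives the stated form $\al\, C(p,A_0,t)$ precisely because the paper works with $\al\in(0,1)$; keeping the stronger $\al^p$ bound would also be perfectly compatible with its later use in the inviscid limit.
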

\begin{cor}
The solution $z_\al$ to \eqref{eqNz} satisfies the estimate
\begin{equation}\label{Doobfinal}
\E\sup_{t\in [0,T]}\|z_\al(t)\|^{2p}\leq C(p,A_0,T)\al ,
\end{equation}
where, $C(p,A_0,T)$ does not depend on $\alpha,$ and $p\geq 1$.
\end{cor}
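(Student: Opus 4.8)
The plan is to upgrade the fixed-time estimate \eqref{estimeeL2puissancepZ} to a supremum-in-time estimate by a standard martingale argument. First I would recall that $z_\al$ solves the linear equation \eqref{eqNz}, so by Duhamel we may write $z_\al(t)=\sqrt{\al}\int_0^t e^{(t-\tau)L}\,d\zeta_N(\tau)$ where $L=\i(\Delta-1)-\al(1-\Delta)^{s-1}$ and $\zeta_N$ is the finite-dimensional Brownian motion from \eqref{formula_BM}. The key structural point is that $e^{tL}$ is the product of the unitary group $e^{\i t(\Delta-1)}$ and the self-adjoint contraction semigroup $e^{-\al t(1-\Delta)^{s-1}}$; these commute on $E_N$ and both are bounded in operator norm on $L^2$ by $1$ (the dissipative factor because $(1-\Delta)^{s-1}\ge 0$).

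The hard part, and really the only non-routine point, is that $e^{(t-\tau)L}$ depends on the running time $t$, so $t\mapsto z_\al(t)$ is not literally a martingale and one cannot apply Doob's inequality directly to it. I would handle this in the usual way: fix $t\in[0,T]$ and split $e^{(t-\tau)L}=e^{(t-T)L}\,e^{(T-\tau)L}$; since $\|e^{(t-T)L}\|_{L^2\to L^2}\le 1$ for $t\le T$ (this uses $t-T\le 0$ together with the unitarity of the Schrödinger part and, for the negative-time dissipative factor, the spectral bound on $E_N$ — alternatively one rewrites $e^{(t-T)L}=e^{\i(t-T)(\Delta-1)}e^{\al(T-t)(1-\Delta)^{s-1}}$ and notes that on the finite-dimensional space $E_N$ the growing factor is bounded by $e^{\al T\lambda_N^{s-1}}$, a constant, which is harmless since $N$ is fixed here), one gets
\begin{align*}
\|z_\al(t)\|\le C_{N,T}\Bigl\|\sqrt{\al}\int_0^t e^{(T-\tau)L}\,d\zeta_N(\tau)\Bigr\|=:C_{N,T}\|M(t)\|,
\end{align*}
and now $t\mapsto M(t)$ \emph{is} a continuous $E_N$-valued martingale. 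A cleaner alternative, which avoids any $N$-dependent constant, is the factorization method (stochastic Fubini): write $z_\al$ via the Da Prato--Kwapień--Zabczyk formula as a time-convolution of a lower-regularity stochastic integral, then use the $L^{2p}$ bound on that integral together with the smoothing in the convolution kernel. Since uniformity in $N$ is not claimed in this corollary, the first route suffices.

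Granting that reduction, I would then apply the Burkholder--Davis--Gundy inequality (or simply Doob's maximal inequality applied to the submartingale $\|M(t)\|^{2p}$, using $2p\ge 1$) to obtain
\begin{align*}
\E\sup_{t\in[0,T]}\|z_\al(t)\|^{2p}\le C_{N,T}^{2p}\,\E\sup_{t\in[0,T]}\|M(t)\|^{2p}\le C\,\E\|M(T)\|^{2p},
\end{align*}
and finally identify $\E\|M(T)\|^{2p}$ with (a constant times) $\E\|z_\al(T)\|^{2p}$ — indeed $z_\al(T)=e^{0\cdot L}M(T)=M(T)$ up to the contraction $e^{(T-T)L}=\mathrm{Id}$, so in fact $M(T)=z_\al(T)$ exactly when we take the splitting point equal to $T$. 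Invoking \eqref{estimeeL2puissancepZ} with $t=T$ then gives $\E\|z_\al(T)\|^{2p}\le \al\,C(p,A_0,T)$, and tracking constants through yields the stated bound $\E\sup_{t\in[0,T]}\|z_\al(t)\|^{2p}\le C(p,A_0,T)\,\al$ with a constant independent of $\al$. The only care needed is to make sure the prefactors coming from the time-splitting and from BDG do not secretly depend on $\al$: the splitting constant is $1$ (as $t\le T$), and the BDG constant depends only on the exponent $2p$, so the $\al$-dependence remains the single linear factor inherited from \eqref{estimeeL2puissancepZ}.
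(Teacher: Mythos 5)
Your argument is correct, but it takes a more roundabout (and in one respect more careful) route than the paper. The paper's own proof is one line: it asserts that $z_\al$ is a martingale adapted to $\mathcal{F}_t$, deduces that $\|z_\al\|^{2p}$ is a submartingale by convexity, applies Doob's maximal inequality to get $\E\sup_{t\in[0,T]}\|z_\al(t)\|^{2p}\leq C_p\E\|z_\al(T)\|^{2p}$, and concludes with \eqref{estimeeL2puissancepZ}. Strictly speaking the process $z_\al$ solving \eqref{eqNz} has a drift, so it is a stochastic convolution rather than a martingale, and your time-splitting $e^{(t-\tau)L}=e^{(t-T)L}e^{(T-\tau)L}$ is exactly the standard repair: it produces a genuine $E_N$-valued martingale $M(t)$ with $M(T)=z_\al(T)$, to which Doob (or BDG) applies, and then \eqref{estimeeL2puissancepZ} finishes the proof. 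You correctly catch, mid-argument, that your opening claim $\|e^{(t-T)L}\|_{L^2\to L^2}\leq 1$ is false for $t<T$ (the dissipative factor runs backwards), and you replace it by the spectral bound $e^{\al T(1+\lambda_N)^{s-1}}$ on $E_N$, which is independent of $\al\in(0,1)$ but not of $N$. That is the one caveat: your constant is $C(p,A_0,T,N)$ rather than the $C(p,A_0,T)$ written in the statement. This is harmless for every use of \eqref{Doobfinal} in the paper (there $N$ is fixed and only $\al_k\to 0$ matters), and you flag the factorization method as the way to remove the $N$-dependence; an equally simple alternative would be to apply It\^o's formula to $\|z_\al\|^2$, drop the dissipative drift by its sign, and use BDG on the remaining martingale term together with \eqref{estimeeL2puissancepZ}, which yields a bound depending only on $(p,A_0,T)$ and linear in $\al$, exactly as stated.
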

\begin{proof}
We have that $z_\al$ is a martingale adapted to $\mathcal{F}_s$, thanks to the well known properties of the It\^o integral. Since the function $u\mapsto \|u\|^{2p}, \ p\geq 1$ is convex, then $\|z_\al\|^{2p}$ is a submartingale. Then by the Doob inequality,
\begin{align}
\E\sup_{t\in [0,T]}\|z_\al(t)\|^{2p}\leq C_p\E\|z_\al(T)\|^{2p}.\label{Doob}
\end{align}
We finish the proof after a use of the estimate \eqref{estimeeL2puissancepZ}.
\end{proof}

\subsubsection{Existence of stationary measures and uniform  bounds.}
\begin{thm}\label{thmStatMes}
For any $N\geq 2$ and any  $\alpha\in (0,1)$, there is an stationary measure $\mu_{\alpha,N}$ to $(\ref{eqN})$ concentrated on $H^3$. Moreover, we have the following estimates
\begin{align}
\int_{L^2}\mathcal{M}(u)\mu_{\alpha,N}(du) &=\frac{A_{0,N}}{2} \leq \frac{A_0}{2}, \label{est_StatMeas_mathcalM}\\
\int_{L^2}\mathcal{E}(u)\mu_{\alpha,N}(du) &\leq C.\label{est_StatMeas_mathcalE}
\end{align}
where $C$ does not depend on $\alpha$ and $N.$
\end{thm}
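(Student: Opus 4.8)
The plan is to obtain $\mu_{\al,N}$ by a Bogolyubov--Krylov argument and then to read off the two bounds by feeding the stationary law into the It\^o balances of Propositions \ref{prop_est_prob_Mass} and \ref{prop_est_prob_Energy}. For existence, consider the time averages $\lambda_T=\frac1T\int_0^T\mathfrak{P}_{\al,N}^{t*}\delta_0\,dt$, $T\ge1$. Since $\mathfrak{P}_{\al,N}^t$ is Feller (as already noted) and $E_N$ is finite dimensional, it suffices to prove a uniform-in-time bound on $\E M(u_\al(t,0))$ and then invoke Chebyshev. From \eqref{DefmathcalM}, $\rho\ge0$ and \eqref{IneqEmbedding} (as $s-1\ge1$) one has $\mathcal M(u)\ge\|u\|_{s-1}^2+\|u\|^2\ge 4M(u)$; plugging this into \eqref{est_esp_L2} with $u_0=0$ and differentiating in $t$ gives $\tfrac{d}{dt}\E M(u)\le\tfrac{\al A_{0,N}}2-4\al\E M(u)$, hence by Gronwall $\sup_{t\ge0}\E M(u_\al(t,0))\le\tfrac{A_{0,N}}8\le\tfrac{A_0}8$. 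Thus $\{\lambda_T\}$ is tight; let $\mu_{\al,N}$ be a weak limit along some $T_k\uparrow\infty$. By Krylov--Bogolyubov it is stationary for $\mathfrak{P}_{\al,N}^t$, it is a probability measure by tightness, and it is concentrated on $H^3$ since $E_N\subset C^\infty(\TT)\subset H^3$; passing to the limit after truncating $M$ gives $\int M\,d\mu_{\al,N}\le\tfrac{A_0}8<\infty$.

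For \eqref{est_StatMeas_mathcalM}, start \eqref{eqN} from $u_0\sim\mu_{\al,N}$ (realized independent of $(\mathcal F_t)$); this is legitimate because $\E M(u_0)=\int M\,d\mu_{\al,N}<\infty$, so \eqref{est_esp_L2} applies. By stationarity $\E M(u(t))=\int M\,d\mu_{\al,N}$ for every $t$, so \eqref{est_esp_L2} reduces to $\al\int_0^t\E\mathcal M(u)\,d\tau=\tfrac{\al A_{0,N}}2\,t$; again by stationarity $\E\mathcal M(u(\tau))$ is constant in $\tau$ and equal to $\int\mathcal M\,d\mu_{\al,N}\in[0,\infty]$, and the previous identity forces this value to be finite and equal to $\tfrac{A_{0,N}}2\le\tfrac{A_0}2$. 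This is \eqref{est_StatMeas_mathcalM}.

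For \eqref{est_StatMeas_mathcalE} I first record an interpolation estimate: for $u\in E_N$ and $q\ge2$, $\|u\|_{L^q}^q\le\|u\|_{L^\infty}^{q-2}\|u\|^2\le C_s\|u\|_{s-}^{q-2}\|u\|^2$ by $H^{s-}\hookrightarrow L^\infty$ ($s->\tfrac32$), and by \eqref{Hyprho} $\|u\|_{s-}^{q-2}\le1+\|u\|_{s-}^{2p}\le1+C(\rho,p)e^{\rho(\|u\|_{s-}^2)}$, so with \eqref{IneqEmbedding} and \eqref{DefmathcalM}
\[
\|u\|_{L^q}^q\le C_{s,\rho,p,q}\,\mathcal M(u),\qquad q\ge2 .
\]
In particular $E(u)\le C_{s,\rho,p}\,\mathcal M(u)$ (take $q=p+1$ and $\|u\|_1^2\le\|u\|_{s-1}^2$), so \eqref{est_StatMeas_mathcalM} makes $\int E\,d\mu_{\al,N}<\infty$ and \eqref{est_espEnergy} applies to the solution started from $\mu_{\al,N}$. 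Using stationarity as above, all the $t$-integrals in \eqref{est_espEnergy} become $t$ times $\mu_{\al,N}$-averages and $\int E\,d\mu_{\al,N}$ cancels, leaving
\[
\int\mathcal E\,d\mu_{\al,N}\le\tfrac12\Bigl(A_{1,N}+A_{0,N}(2\pi)^{-3}\!\int\|u\|_{L^{p-1}}^{p-1}\,d\mu_{\al,N}\Bigr)\le\tfrac12\bigl(A_1+C_{s,\rho,p}\,A_0^2\bigr),
\]
where we applied the displayed inequality with $q=p-1$ (recall $p\ge3$), then \eqref{est_StatMeas_mathcalM}, and $A_{1,N}\le A_1$, $A_{0,N}\le A_0$. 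The right-hand side depends only on $s,\rho,p$ and on $A_0,A_1$, not on $\al\in(0,1)$ or $N\ge2$, which is \eqref{est_StatMeas_mathcalE}.

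The genuinely delicate ingredients are, first, that the hypotheses of Propositions \ref{prop_est_prob_Mass}--\ref{prop_est_prob_Energy} are met for $u_0\sim\mu_{\al,N}$ — guaranteed by the finiteness $\int M\,d\mu_{\al,N}<\infty$ coming from the tightness construction and by the pointwise domination $E\lesssim\mathcal M$ — so that the very fast weight $e^{\rho(\|u\|_{s-}^2)}$ in $\mathcal M$ never obstructs the argument; and second, keeping every constant free of $N$. The latter is precisely where the assigned-regularity factor is decisive: by \eqref{Hyprho}, $e^{\rho}$ dominates every polynomial power of $\|u\|_{s-}$, which yields the $N$-free bound $\|u\|_{L^{q}}^{q}\lesssim\mathcal M(u)$ and hence the $N$-free control of $\int\|u\|_{L^{p-1}}^{p-1}\,d\mu_{\al,N}$; a naive Sobolev bound on $E_N$ would instead carry positive powers of $\lambda_N$, and the estimates \eqref{est_StatMeas_mathcalM}--\eqref{est_StatMeas_mathcalE} would fail to be uniform in $N$.
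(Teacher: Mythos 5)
Your proof is correct and follows essentially the same route as the paper: a Bogoliubov--Krylov construction started from $\delta_0$ using the mass balance \eqref{est_esp_L2}, followed by feeding the stationary law into Propositions \ref{prop_est_prob_Mass} and \ref{prop_est_prob_Energy} so that stationarity cancels $\E M$ and $\E E$ and yields \eqref{est_StatMeas_mathcalM}--\eqref{est_StatMeas_mathcalE}. The only differences are matters of execution, not of method: you get tightness from a Gronwall-type uniform-in-time bound on $\E M(u_\al(t,0))$ instead of the paper's time-averaged Chebyshev estimate, and you make explicit the interpolation $\|u\|_{L^{p-1}}^{p-1}\lesssim \mathcal{M}(u)$ (via $H^{s-}\hookrightarrow L^\infty$ and \eqref{Hyprho}), which is exactly the mechanism the paper leaves implicit when asserting that the constant $C$ in \eqref{est_StatMeas_mathcalE} is independent of $\al$ and $N$.
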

\begin{proof}
%Set the measure
%\begin{equation}
%\lambda(t)=\frac{1}{t}\int_0^t\mathfrak{P}_s^*\delta_0ds
%\end{equation}
{\bfseries Existence of stationary measures.}
Let $B_R$ be the ball of $H^1$ with center $0$ and radius $R$. We have, with the use of the Chebyshev inequality, that 
\begin{align*}
\frac{1}{t}\int_0^t\mathfrak{P}_{\al,N}^{\tau*}\lambda(B_R^c)d\tau &=\int_{L^2}\lambda(dw)\frac{1}{t}\int_0^t T_{\al,N}^{\tau}(w,B_R^c)d\tau\\
&\leq \int_{L^2}\lambda(dw)\frac{1}{t}\int_0^t \frac{\E\|\phi_\alpha (t)w\|_1^2}{R^2}d\tau\\
&\leq \int_{L^2}\lambda(dw)\frac{\E\|w\|_1^2+\frac{\alpha A_{0,N}}{2}t}{\alpha tR^2}\\
&\leq \frac{1}{R^2}\left(\frac{\E_\lambda\|w\|^2_1}{\alpha t}+\frac{A_{0,N}}{2}\right).
\end{align*}
Choose $\lambda$ to be the Dirac measure concentrated at $0,$ $\delta_0.$ Then we obtain
\begin{align*}
\frac{1}{t}\int_0^t\mathfrak{P}_{\al,N}^{\tau*}\delta_0(B_R^c)d\tau &\leq \frac{A_{0,N}}{2R^2}.
\end{align*}
Whence follows the compactness of the the family $\left\{\frac{1}{t}\int_0^t\mathfrak{P}_{\al,N}^{\tau*}\lambda d\tau, \ t\geq 0\right\}$ on $E_N.$\\
Let $\mu_{\alpha,N} \in \mathfrak{p}(E_N)$ be an accumulation point at $\infty$, that is,
\begin{align*}
\mu_{\alpha,N}=\lim_{n\to\infty}\frac{1}{t_n}\int_0^{t_n}\mathfrak{P}_{\al,N}^{\tau*}\delta_0 d\tau.
\end{align*} 
The well known Bogoliubov-Krylov argument states that $\mu_{\alpha,N}$ is stationary for $(\ref{eqN}).$

{\bfseries Estimates of the stationary measures.}
Denote by $\lambda_t$ the measure $\frac{1}{t}\int_0^t\mathfrak{P}_{\al,N}^{\tau*}\delta_0 d\tau$. We have  that, using \eqref{est_esp_L2}, that 
\begin{align*}
\int_{L^2}\mathcal{M}(v)\chi_R(\|v\|)\lambda_t(dv)\leq \int_{L^2}\mathcal{M}(v)\lambda_t(dv)\leq\frac{A_{0,N}}{2}.
\end{align*}
Now, since $v\mapsto \mathcal{M}(v)\chi_R(\|v\|)$ is continuous bounded on $L^2$, we have by passing to the limit on a subsequence $t_n\to\infty$
\begin{align*}
\int_{L^2}\mathcal{M}(v)\chi_R(\|v\|)\mu(dv)\leq\frac{A_{0,N}}{2}.
\end{align*}
We know use the Fatou's lemma (under the limit $R\to \infty$) to obtain
\begin{align*}
\int_{L^2}\mathcal{M}(v)\mu(dv)\leq\frac{A_{0,N}}{2}.
\end{align*}
In particular
\begin{align}
\int_{L^2}\|v\|^q\mu(dv)\leq\frac{A_{0,N}}{2}\quad \text{for any $q\geq 1$},\label{Require0}
\end{align}
and using finite-dimensionality (with \eqref{Require0}), we remark that
\begin{align}
\int_{L^2}E(v)\mu(dv)<\infty.\label{Require2}
\end{align}
Therefore, \eqref{Require0} provides the  requirement of Proposition \ref{prop_est_prob_Mass}, then we obtain the identity \eqref{est_esp_L2}. But since the measure $\mu_\al$ is stationary, we arrive at the identity \eqref{est_StatMeas_mathcalM}. \\
To establish the estimate \eqref{est_StatMeas_mathcalE}, let us observe that \eqref{Require2} implies the condition of Proposition \ref{prop_est_prob_Energy}. Therefore we have the estimate \eqref{est_espEnergy}. Again, thanks to the stationarity of $\mu_\al$ and the fact that $A_{s,N}\leq A_s$, we obtain \eqref{est_StatMeas_mathcalE} with the required constant $C$. 
\end{proof}

Let us give an additional estimate for the measures $\mu_{\al,N}.$
Let $\chi$ be a $C^\infty$ function having value $1$ on $[0,1]$ and $0$ on $[2,\infty).$ Clearly the function $\chi$ and its derivative are bounded, we can take a universal constant that bounds $\chi$ and its first two derivative, so without loss of generality we can consider this constant to be $1$. Set $\chi_R(x)=\chi(\frac{x}{R})$. We then have the following estimates on the derivatives of $\chi_R:$ 
\begin{align}
|\chi_R^{(m)}(x)| &\leq R^{-m}.
\end{align}
We have that
\begin{prop}
For any $R>0$, the following estimate holds
\begin{align}
\int_{L^2}\mathcal{M}(u)(1-\chi_R(\|u\|^2))\mu_{\al,N}(du)\leq C_2R^{-1}.\label{est_StatMeas_MR}
\end{align}
where $C_2$ is independent of $(\al,N)$.
\end{prop}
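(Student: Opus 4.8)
The plan is to rerun the It\^o-formula computation behind Proposition~\ref{prop_est_prob_Mass}, but for a nonlinear function of the mass whose derivative is exactly the cut-off weight $1-\chi_R$; stationarity of $\mu_{\al,N}$ then turns this into an identity whose right-hand side is $O(R^{-1})$, because both $1-\chi_R(\|u\|^2)$ and $\chi_R'(\|u\|^2)$ are supported in the region $\{\|u\|^2\ge R\}$, which has small $\mu_{\al,N}$-measure by Chebyshev.

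First I would introduce $\psi(x)=\int_0^x(1-\chi_R(r))\,dr$. Then $\psi\in C^\infty$, $0\le\psi'=1-\chi_R\le1$ with $\psi'\equiv0$ on $[0,R]$, while $\psi''=-\chi_R'$ is supported in $[R,2R]$ with $|\psi''|\le R^{-1}$; also $0\le\psi(x)\le x$, so $\Psi(u):=\psi(\|u\|^2)\le\|u\|^2$ is $\mu_{\al,N}$-integrable. Next, with $x(t)=\|u(t)\|^2=2M(u(t))$, the computation leading to \eqref{est_esp_L2} gives $dx=(-2\al\mathcal{M}(u)+\al A_{0,N})\,dt+2\sqrt{\al}\sum_{|m|\le N}a_m(u,e_m)\,d\beta_m$ and $d\langle x\rangle=4\al\sum_{|m|\le N}a_m^2(u,e_m)^2\,dt$, so the finite-dimensional It\^o formula applied to $\Psi(u)=\psi(x)$ yields
\begin{align*}
d\Psi(u)=\Bigl[\psi'(\|u\|^2)\bigl(-2\al\mathcal{M}(u)+\al A_{0,N}\bigr)+2\al\,\psi''(\|u\|^2)\!\!\sum_{|m|\le N}\!\!a_m^2(u,e_m)^2\Bigr]dt+dM_t ,
\end{align*}
with $M_t$ a martingale that is square-integrable once $u_0\sim\mu_{\al,N}$ (its bracket is controlled by $\int\|u\|^2\,d\mu_{\al,N}<\infty$). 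Integrating on $[0,t]$, taking expectations under the stationary law, using $\E\Psi(u(t))=\E\Psi(u_0)$ and $\E M_t=0$, and dividing by $t$, I obtain
\begin{align*}
\int_{L^2}(1-\chi_R(\|u\|^2))\mathcal{M}(u)\,\mu_{\al,N}(du)=\frac{A_{0,N}}{2}\int_{L^2}(1-\chi_R(\|u\|^2))\,\mu_{\al,N}(du)+\int_{L^2}\psi''(\|u\|^2)\!\!\sum_{|m|\le N}\!\!a_m^2(u,e_m)^2\,\mu_{\al,N}(du) .
\end{align*}

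To close, I would estimate the two right-hand terms. From $\mathcal{M}(u)\ge\|u\|^2$ and \eqref{est_StatMeas_mathcalM}, $\int\|u\|^2\,d\mu_{\al,N}\le A_{0,N}/2$; hence, since $1-\chi_R(\|u\|^2)$ vanishes on $\{\|u\|^2\le R\}$, Chebyshev gives $\int(1-\chi_R(\|u\|^2))\,d\mu_{\al,N}\le\mu_{\al,N}(\|u\|^2>R)\le A_{0,N}/(2R)$, so the first term is $\le A_{0,N}^2/(4R)\le A_0^2/(4R)$. For the second, $\psi''(\|u\|^2)$ is supported in $\{R\le\|u\|^2\le2R\}$ with $|\psi''|\le R^{-1}$, and there $\sum_{|m|\le N}a_m^2(u,e_m)^2\le A_{0,N}\|u\|^2\le2RA_{0,N}$ (using $\sum_m(u,e_m)^2=\|u\|^2$ and $\sum_m|a_m|^2=A_{0,N}$), so this term is at most $R^{-1}\cdot2RA_{0,N}\cdot\mu_{\al,N}(\|u\|^2\ge R)\le A_{0,N}^2/R\le A_0^2/R$. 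Adding the two bounds yields \eqref{est_StatMeas_MR} with $C_2=5A_0^2/4$, which is independent of $(\al,N)$ (recall $A_{0,N}\le A_0<\infty$).

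The only point requiring care is the standard justification that the It\^o--stationarity argument is licit: the $dt$-integrand $\psi'(\|u\|^2)(-2\al\mathcal{M}(u)+\al A_{0,N})+2\al\psi''(\|u\|^2)\sum_{|m|\le N}a_m^2(u,e_m)^2$ must be $\mu_{\al,N}$-integrable (so that the expectation of its time integral equals $t$ times its $\mu_{\al,N}$-average) and the stochastic integral must have zero mean. Both facts are immediate from $\int\mathcal{M}(u)\,d\mu_{\al,N}=A_{0,N}/2$ and $\int\|u\|^2\,d\mu_{\al,N}<\infty$ established in Theorem~\ref{thmStatMes}; everything else is the choice of $\psi$ and two applications of Chebyshev's inequality.
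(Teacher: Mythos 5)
Your proof is correct and follows essentially the same route as the paper: an It\^o-formula computation for a cut-off modification of the mass under the stationary measure $\mu_{\al,N}$, followed by Chebyshev's inequality on the region $\{\|u\|^2\ge R\}$ via \eqref{est_StatMeas_mathcalM}. The only (harmless, in fact slightly cleaner) difference is your choice of test function $\psi(\|u\|^2)$ with $\psi'=1-\chi_R$, whereas the paper applies It\^o to $\|u\|^2\bigl(1-\chi_R(\|u\|^2)\bigr)$.
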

\begin{proof}
Let $F_R(u)=\|u\|^2(1-\chi_R)(\|u\|^2)$. Applying Ito's formula we see the following
\begin{align*}
dF_R &+2\alpha \mathcal{M}(u)(1-\chi_R(\|u\|^2)) =-2\alpha\mathcal{M}(u)\chi'_R(\|u\|^2)\\
&+\frac{\al}{2}\left((1-\chi_R(\|u\|^2))A_0^N+2\chi_R'(\|u\|^2)\sum_{|m|=0}^Na_m^2(u,e_m)^2+\|u\|^2\left[A_0^N\chi_R'(\|u\|^2)+\chi_R''(\|u\|^2)\sum_{|m|=0}^Na_m^2(u,e_m)^2\right]\right).
\end{align*}
Let us use the invariance and \eqref{est_StatMeas_mathcalM} to get
\begin{align}
\E\mathcal{M}(u)(1-\chi_R(\|u\|^2))\leq A_0^N\E(1-\chi_R(\|u\|^2))+\frac{C(A_0)}{R}.
\end{align}
Now using the Markov inequality and \eqref{est_StatMeas_mathcalM}, we have
\begin{align}
\int_{L^2}(1-\chi_R(\|u\|^2))\mu_{\al,N}\leq C R^{-1} 
\end{align}
where $C$ is independent of $(\al, N)$. Overall,
\begin{align}
\int_{L^2}\mathcal{M}(u)(1-\chi_R(\|u\|^2))\mu_{\al,N}\leq C R^{-1},
\end{align}
which is the claim.
\end{proof}

\section{Inviscid limit towards the approximating NLS-7 equations}
\label{ASection4Invisc}
We consider now the truncated NLS equations
\begin{align}
\dt u &=\i[(\Delta-1)u+P_N(|u|^{p-1}u)],\label{NLS-7N}\\
u|_{t=0} &=u_0\in E_N.
\end{align}
Using the preservation of the $L^2-$norm, we see that the local solutions constructed in Proposition \ref{propLWP} are in fact  global.  Uniqueness and continuity follow, through usual methods, from the regularity of the non-linearity, we then obtain global well-posedness. Define the associated global flow $\phi_{N}(t):E_N\to E_N,\ \ u_0\mapsto \phi_N^tu_0$, where $\phi_N^t(u_0)=:u(t,u_0)$ represents the solution to \eqref{NLS-7N} starting at $u_0$. Let us set the corresponding Markov groups 
\begin{align*}
\Phi_N^tf(v) &=f(\phi_{N}^t(P_Nv));\quad C_b(L^2)\to C_b(L^2),\\
\Phi_{N}^{t*}\lambda(\Gamma) &=\lambda({\phi_N^{-t}}(\Gamma)); \quad \mathfrak{p}(L^2)\to \mathfrak{p}(L^2).
\end{align*} 
From the estimate \eqref{est_StatMeas_mathcalM}, we have the weak compactness of any sequence $(\mu_{\al_k,N})$ with respect to the topology of $H^{1-\epsilon},$ therefore there exists a subsequence $(\mu_{\al_{k},N})=:(\mu_{k,N}),$ converging to a measure $\mu_N$ on $L^2$. We have the following
\begin{prop}
Let $N\geq 2$, the measure $\mu_N$ is invariant under $\phi_N(t)$ and satisfies the estimates
\begin{align}
\int_{L^2}\mathcal{M}(u)\mu_N(du) &=\frac{A_{0,N}}{2}\leq \frac{A_{0}}{2}, \label{est_muN_M}\\
\int_{L^2}\mathcal{E}(u)\mu_N(du) &\leq C_1, \label{est_muN_E}\\
\int_{L^2}\mathcal{M}(u)(1-\chi_R(\|u\|^2))\mu_N(du) &\leq C_2R^{-1}.\label{est_muN_MR}
\end{align} 
where $C_1$ and $C_2$ are independent of $N$. 
\end{prop}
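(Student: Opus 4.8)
The argument is the standard inviscid-limit scheme of the fluctuation--dissipation method, carried out in the finite-dimensional space $E_N$; it has three steps: tightness and extraction, passage to the limit in the stationarity identity, and passage to the limit in the moment bounds. For the extraction (already used above) I would record the underlying tightness: from \eqref{est_StatMeas_mathcalM} and \eqref{DefmathcalM},
\[
\int_{L^2}\|u\|_{s-1}^2\,\mu_{\alpha,N}(du)\le\int_{L^2}\mathcal{M}(u)\,\mu_{\alpha,N}(du)=\frac{A_{0,N}}{2}
\]
uniformly in $\alpha\in(0,1)$, so by Chebyshev, Prokhorov, the bound $s\ge2$ and the compact embedding $H^{s-1}\hookrightarrow H^{1-\epsilon}$, the family $\{\mu_{\alpha_k,N}\}_k$ is tight (even in $H^{1-\epsilon}$, the form needed later for $N$-uniformity). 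I keep the subsequence $\mu_{k,N}:=\mu_{\alpha_k,N}\rightharpoonup\mu_N$ fixed; since all these measures live on the finite-dimensional $E_N$, where the Sobolev topologies coincide, testing the convergence against $f\in C_b(L^2)$ causes no difficulty.

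For invariance, fix $t\ge0$. Stationarity of $\mu_{\alpha_k,N}$ for $\mathfrak{P}_{\alpha_k,N}^t$ reads $\int\mathfrak{P}_{\alpha_k,N}^t f\,d\mu_{\alpha_k,N}=\int f\,d\mu_{\alpha_k,N}$ for $f\in C_b(L^2)$, and the right side tends to $\int f\,d\mu_N$. The heart of the matter is the inviscid convergence of flows: for every compact $K\subset E_N$ and every $T>0$,
\[
\sup_{w\in K}\ \E\ \sup_{\tau\in[0,T]}\ \|u_\alpha(\tau,w)-\phi_N^\tau(w)\|\ \longrightarrow\ 0\qquad(\alpha\to0).
\]
To prove it I would write $u_\alpha=v_\alpha+z_\alpha$ with $z_\alpha$ the stochastic convolution solving \eqref{eqNz} and $v_\alpha$ the solution of \eqref{eqNv}; by \eqref{Doobfinal} one has $\E\sup_{[0,T]}\|z_\alpha\|^2\lesssim\alpha$, and re-reading the a priori computation in the proof of Proposition \ref{prop_control_M_E} with the improved information $\E\sup_{[0,T]}\|z_\alpha\|^2/\alpha\lesssim1$ gives an $\alpha$-uniform bound $\E\sup_{[0,T]}\|u_\alpha(\cdot,w)\|^2\le C(K,T)$ for $w\in K$. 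Then, subtracting \eqref{NLS-7N} from \eqref{eqN} and testing the difference against itself (treating the It\^o term by Burkholder--Davis--Gundy, or, more economically, comparing the deterministic profiles $v_\alpha$ and $\phi_N$ pathwise), one closes a Gronwall estimate in $E_N$ — equivalence of norms, locally Lipschitz nonlinearity, $\alpha$-uniform a priori control — whose only forcing comes from the dissipative $\alpha$-terms and the $\sqrt{\alpha}$-noise and hence vanishes. This yields $\mathfrak{P}_{\alpha_k,N}^t f\to\Phi_N^t f=f\circ\phi_N^t$ uniformly on compact subsets of $E_N$ (first for uniformly continuous $f$, then for general $f\in C_b(L^2)$ by truncation), and, combined with the tightness of $\{\mu_{\alpha_k,N}\}$, it forces $\int\mathfrak{P}_{\alpha_k,N}^t f\,d\mu_{\alpha_k,N}\to\int\Phi_N^t f\,d\mu_N$. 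Thus $\int f\circ\phi_N^t\,d\mu_N=\int f\,d\mu_N$ for all $t\ge0$ and $f\in C_b(L^2)$, i.e. $\mu_N$ is invariant under $\phi_N$.

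It remains to pass the three bounds to the limit, by the truncation device of the proof of Theorem \ref{thmStatMes}. For fixed $N$ and $R$ the map $u\mapsto\mathcal{M}(u)\chi_R(\|u\|^2)$ is bounded and continuous on $E_N$ (on its support $\{\|u\|^2\le2R\}$ the exponential factor in \eqref{DefmathcalM} is bounded, by norm equivalence on $E_N$), so $\int\mathcal{M}\chi_R\,d\mu_{\alpha_k,N}\to\int\mathcal{M}\chi_R\,d\mu_N$; by \eqref{est_StatMeas_mathcalM} and \eqref{est_StatMeas_MR} the left sides lie in $[A_{0,N}/2-C_2R^{-1},\,A_{0,N}/2]$, hence so does the limit, and letting $R\to\infty$ (monotone convergence, $\mathcal{M}\ge0$, $\chi_R\uparrow1$) gives $\int\mathcal{M}\,d\mu_N=A_{0,N}/2$, which is \eqref{est_muN_M}, while the same bracket read at finite $R$ yields \eqref{est_muN_MR}. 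The energy bound \eqref{est_muN_E} follows in the same way, using \eqref{DefmathcalE} (so $\mathcal{E}(u)+C_s\|u\|_{s-}^{p+1}\ge\|u\|_s^2\ge0$) to handle the one term of indefinite sign before the limit; $C_1$ and $C_2$ are $N$-independent precisely because \eqref{est_StatMeas_mathcalE} and \eqref{est_StatMeas_MR} are. The only genuinely substantive step is the inviscid convergence $u_\alpha\to\phi_N$ with Gronwall constants that do not degenerate as $\alpha\to0$ — which is why one must first establish the $\alpha$-uniform a priori bound (exploiting the damping structure of \eqref{eqN} together with \eqref{Doobfinal}) rather than invoke the $\omega$- and $\alpha$-dependent bound of Proposition \ref{prop_control_M_E}; a subtler point, easy to miss, is that equality (not merely "$\le$") must survive in \eqref{est_muN_M}, and this is exactly where the $\alpha$-uniform tail estimate \eqref{est_StatMeas_MR} is indispensable.
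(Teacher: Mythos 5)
Your treatment of the three estimates is essentially the paper's: \eqref{est_muN_E} and \eqref{est_muN_MR} pass to the limit from \eqref{est_StatMeas_mathcalE} and \eqref{est_StatMeas_MR}, and the equality \eqref{est_muN_M} is obtained by squeezing $\int\mathcal{M}(u)\chi_R(\|u\|^2)\,d\mu_{k,N}$ between $\tfrac{A_{0,N}}{2}-C_2R^{-1}$ and $\tfrac{A_{0,N}}{2}$ and letting $k\to\infty$, then $R\to\infty$; your closing remark that the tail bound \eqref{est_StatMeas_MR} is what upgrades ``$\le$'' to ``$=$'' is exactly the paper's point. The overall invariance scheme (stationarity of $\mu_{k,N}$, weak convergence, Feller property for the $B$-term, Lipschitz bounded test functions, restriction to a ball with the mass estimate controlling the complement) also matches.

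The gap is in your key convergence claim $\sup_{w\in K}\E\sup_{\tau\in[0,T]}\|u_\alpha(\tau,w)-\phi_N^\tau(w)\|\to0$. You justify it by a Gronwall estimate for the difference ``whose only forcing comes from the dissipative $\alpha$-terms and the $\sqrt\alpha$-noise'', given only the $\alpha$-uniform bound $\E\sup_{[0,T]}\|u_\alpha\|^2\le C$. But subtracting \eqref{NLS-7N} from \eqref{eqNv} produces (see \eqref{ConvGENERALEst}) a Gronwall prefactor of the form $\exp\bigl(C(N)\int_0^T(1+\|u\|^{p-1}+\|v_k\|^{p-1})d\tau\bigr)\bigl[1+e^{\rho(c(N)(\|v_k\|+\|z_k\|))}\bigr]$ multiplying the small forcing. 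To conclude that the expectation of this product vanishes you need uniform-in-$\alpha$ integrability of that random exponential --- in fact super-exponential, since $\rho=3\xi^{-1}$ may grow arbitrarily fast --- and a second-moment bound gives nothing of the sort; Cauchy--Schwarz merely shifts the problem to $\E e^{2\rho(c(N)\|u_\alpha\|)}$, which you neither establish nor can deduce from \eqref{Doobfinal} or from stationary-measure bounds (your statement is for fixed deterministic $w\in K$, so stationarity is unavailable). The pathwise comparison of $v_\alpha$ with $\phi_N$ runs into the same obstruction the moment you take expectations. This is precisely why the paper does not prove your statement: it localizes to the event $S_r$ on which the stochastic integral and $\|z_k\|$ are pathwise bounded by $r\sqrt{\alpha_k}\,t$, uses the It\^o identity for the mass to get a deterministic bound on $\|u_k\|$ on $S_r$, proves convergence only in the truncated form $\sup_{u_0\in B_R}\E\bigl(\|\phi_N^t P_Nu_0-u_k(t,P_Nu_0)\|\Bbb 1_{S_r}\bigr)\to0$ (Lemma \ref{LemConvUnifInvis}, via dominated convergence), and disposes of $S_r^c$ using only the boundedness of $f$ together with $\P(S_r^c)\le C/r^2$, taking the limits in the order $k\to\infty$, $R\to\infty$, $r\to\infty$. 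Either import this localization (or prove a genuine uniform-in-$\alpha$ super-exponential moment bound, a substantial extra step); as written, ``hence vanishes'' does not follow from the ingredients you cite.
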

Below, the subscript $k$ stands for $\al_k$. We do this abuse of notation to simplify the formulas.
\begin{proof}
\begin{enumerate}
\item {\bfseries Estimates.}
The estimates \eqref{est_muN_E} and \eqref{est_muN_MR} follow respectively from \eqref{est_StatMeas_mathcalE} and \eqref{est_StatMeas_MR}  and the lower semicontinuity of $\mathcal{E}(u)$ and $\mathcal{M}(u)$. Now let us prove \eqref{est_muN_M}: let $\chi_R$ be a bump function on $\R$ having the value $1$ on $[0,1]$ and the value $0$ on $[2,\infty),$ we write
\begin{align*}
\frac{A_{0,N}}{2}-\int_{L^2}(1-\chi_R(\|u\|^2))\mathcal{M}(u)\mu_{k,N}(du)\leq \int_{L^2}\chi_R(\|u\|^2)\mathcal{M}(u)\mu_{k,N}(du)\leq \frac{A_{0,N}}{2}.
\end{align*}
Now, using \eqref{est_StatMeas_MR},
\begin{align*}
\frac{A_{0,N}}{2}-C_2R^{-1}\leq \int_{L^2}\chi_R(\|u\|^2)\mathcal{M}(u)\mu_{k,N}(du)\leq \frac{A_{0,N}}{2}.
\end{align*}
It remains to pass to the limits $k\to\infty$, then $R\to\infty$ to arrive at the claim. 
\item {\bfseries Invariance.} It suffices to show the invariance under $\phi_N^t,\ t>0$. Indeed
For $t<0,$ we have, using the invariance for positive times, that
\begin{align*}
\mu_N(\Gamma)=\mu_N(\phi_N^{t}\Gamma)=\mu_N(\phi_N^{2t}\phi_N^{-t}\Gamma)=\mu_N(\phi_N^{-t}\Gamma),
\end{align*}
which is the needed property. Now the proof of the invariance for positive times is summarized in the following diagram
$$
\hspace{10mm}
\xymatrix{
  \mathfrak{P}_{k,N}^{t*}\mu_{k,N} \ar@{=}[r]^{(I)} \ar[d]^{(III)} & \mu_{k,N} \ar[d]^{(II)} \\
    \Phi_N^{t*}\mu_N \ar@{=}[r]^{(IV)} & \mu_N
  }
$$
\end{enumerate}
The equality $(I)$ represents the stationarity  of $\mu_{k,N}$ under $\mathfrak{P}_{k,N}^{t}$, $(II)$ is the weak convergence of $\mu_{k,N}$ towards $\mu_N$. The equality $(IV)$ represents the (claimed) invariance of $\mu_N$ under $\phi_N$, that will follow once we prove the convergence $(III)$ in the weak topology of $L^2.$ To this end, let $f: L^2\to\R$ be a Lipschitz function that is also bounded by $1.$ We have
\begin{align*}
(\mathfrak{P}_{k,N}^{t*}\mu_{k,N},f)-(\Phi_N^{t*}\mu_N,f)&=(\mu_{k,N},\mathfrak{P}_{k,N}^tf)-(\mu_N,\Phi_N^tf)\\
&=(\mu_{k,N},\mathfrak{P}_{k,N}^tf-\Phi_N^tf)-(\mu_N-\mu_{k,N},\Phi_N^tf)\\
&=A-B.
\end{align*}
Since $\Phi_N^t$ is Feller, we have that $B\to 0$ as $k\to\infty.$ Now, using the boundedness property of $f$, we have
\begin{align*}
|A|\leq \int_{B_R(L^2)}|\Phi_N^tf(u)-\mathfrak{P}_{k,N}^tf(u)|\mu_{k,N}(du)+2\mu_{k,N}(L^2\backslash B_R(L^2))=:A_1+A_2.
\end{align*}
Here $C_f$ is the Lipschitz constant of $f$ and $u_k(t,P_Nu)$ is the solution to \eqref{eqN} at time $t$ and starting from $P_Nu.$ Now from \eqref{est_StatMeas_mathcalM}, we have
\begin{align*}
A_2\leq \frac{C}{R^2}.
\end{align*}
To treat the term $A_1$, let us consider the set $$S_r=\left\{\omega\in\Omega|\ \ \max\left(|\sqrt{\al_k}\sum_{|m|\leq N}\lambda_m\int_0^t(u,e_m)d\beta_m|, \|z_k\|\right)\leq r\sqrt{\al_kt}\right\}\quad r>0,$$ we have the following statement.
\begin{nem}\label{LemConvUnifInvis}
We have that, for any $R>0,$ any $r> 0,$
\begin{align}
\sup_{u\in B_R(L^2)}\E(\|\phi_N^tP_Nu- u_k(t,P_Nu)\|\Bbb 1_{S_r})\to 0,\quad as\ k\to\infty.
\end{align}
\end{nem}
Now let us split $A_1$, and use the Lispschitz and boundedness properties of $f$
\begin{align*}
A_1\leq C_f\int_{B_R}\E\|\phi_N^tP_Nu-u_k(t,P_Nu)\|\Bbb 1_{S_r}\mu_{k,N}(du)+2\int_{B_R}\E(\Bbb 1_{S_r^c})\mu_{k,N}(du) =:A_{1,1}+A_{1,2}.
\end{align*}
It follows from the Lemma \ref{LemConvUnifInvis} above that, for any fixed $R>0$ and $r>0,$ $\lim_{k\to\infty}A_{1,1}=0.$\\
Now, it follows from the classical It\^o isometry and \eqref{est_StatMeas_mathcalM} that
\begin{align*}
\E|\sqrt{\al_k}\sum_{|m|\leq N}\lambda_m\int_0^t(u,e_m)d\beta_m|^2=\al_k\sum_{|m|\leq N}\lambda^2_m\int_0^t(u,e_m)^2dt\leq \al_k A_0\E\|u\|^2\leq C\al_k,
\end{align*}
where $C$ does not depend on $k.$ Also, from \eqref{estimeeL2puissancepZ},
\begin{align*}
\E\|z_k\|^2\leq C\al_k,
\end{align*}
where $C$ is independent of $k.$ Therefore, using the Chebyshev inequality, we have
\begin{align*}
\E(\Bbb 1_{S^c_r})=\P\left\{\omega\ | \  \max\left(\left|\sqrt{\al_k}\sum_{|m|\leq N}\lambda_m\int_0^t(u,e_m)d\beta_m\right|, \|z_k\|\right)\geq r\sqrt{\al_kt} \right\}\leq \frac{C\al_k}{r^2\al_k}=\frac{C}{r^2}.
\end{align*}
Passing to the limits $k\to\infty,\ R\to\infty,\ r\to\infty$ (respecting this order), we obtain $(III)$, and hence $(IV).$
\end{proof}
\begin{proof}[Proof of Lemma \ref{LemConvUnifInvis}]
Set $w_k=u-v_k:=\phi_N^tP_Nu_0-v_k(t,P_Nu),$ where $v_k(t,P_Nu_0)$ is the solution of \eqref{eqNv}, with $\alpha =\al_k$ and that starts from $P_Nu_0.$ We recall that $u_k=v_k+z_k,$ where $z_k$ solves the problem \eqref{eqNz} with $\al=\al_k.$ Now, thanks to \eqref{Doobfinal}, we have that $\E\|z_k\|^2\to 0$ as $k\to\infty.$
Therefore, it suffices to show that
\begin{align}
\sup_{u_0\in B_R(L^2)}\E(\|w_k\|\Bbb 1_{S_r})\to 0,\quad as\ k\to\infty.
\end{align}
to complete the proof of the Lemma \ref{LemConvUnifInvis}.\\ Let us take the difference between the equation \eqref{NLS-7N} and $\eqref{eqNv}$:
\begin{align*}
\dt w_k&= \i[(\Delta-1) w_k-P_N(w_kf_{p-1}(u,v_k))]+\i P_N(g_{p-1}(u,v_k,z_k)z_k)\\
&-\al_k[(1-\Delta)^{s-1}v_k+e^{\rho(\|v_k+z_k\|_{s-})}(v_k+z_k)],
\end{align*}
where $f_{p-1}$ and $g_{p-1}$ are polynomial of degree $p-1$ in the given variables. We observe that $|v_k+z_k|^{p-1}(v_k+z_k)-|u|^{p-1}u=|v_k|^{p-1}v_k-|u|^{p-1}u+z_kg_{p-1}(v_k,z_k)=wf_{p-1}(u,v_k)+z_kg_{p-1}(v_k,z_k).$ \\
Taking the inner product with $w$, we obtain
\begin{align*}
\dt\|w_k\|^2 &\leq 2\|w_k\|^2(1+\lambda_N^2+\|f_{p-1}(u,v_k)\|_{L^\infty_{t,x}})+2\|z_k\|^2\|g_{p-1}(v_k,z_k)\|^2_{L^\infty_{t,x}}\\
&+\al_kC_0(N)\|w_k\|(\|v_k\|+\|z_k\|)e^{\rho(c(N)(\|v_k+z_k\|))}\\
&\leq C_1(N)\|w_k\|^2(1+\lambda_N^2+\|u\|_{L^\infty_{t}L^2_x}^{p-1}+\|v_k\|^{p-1}_{L^\infty_tL^2_x})+C_2(N)\|z_k\|^2\left(\|v_k\|^{2p-2}_{L^\infty_tL^2_x}+\|z_k\|^{2p-2}_{L^\infty_tL^2_x}\right)\\
&+\alpha_kC_3(N)(\|v_k\|^2+\|z_k\|^2)e^{2\rho(c(N)(\|v_k\|+\|z_k\|))}.
\end{align*} 
Using the Gronwall lemma, the fact that $w_k(0)=0$ and using \eqref{Hyprho}, we arrive at 
\begin{align}
\|w_k(t)\|^2\leq C_4(N)e^{C_2(N)\int_0^t(1+\lambda_N^2+\|u\|_{L^\infty_{t}L^2_x}^{p-1}+\|v_k\|^{p-1}_{L^\infty_tL^2_x})d\tau}\left(\int_0^t\|z_k\|^2d\tau+\al_kt\right)\left[1+e^{\rho(c(N)(\|v_k\|_{L^\infty_t L^2_x}+\|z_k\|_{L^\infty_t L^2_x}))}\right]\label{ConvGENERALEst}
\end{align}
 and the estimate \eqref{Doobfinal}, we have that, up to a subsequence, 
\begin{align*}
\lim_{k\to\infty}\sup_{t\in[0,T]}\|w_k\|=0,\ \ \P-almost\ surely.
\end{align*}
Now, writing the It\^o formula for $\|u\|^2$, we have
\begin{align*}
\|u_k\|^2+2\al_k\int_0^t\mathcal{M}(u_k)d\tau =\|P_Nu_0\|^2+\al_k \frac{A_{0,N}}{2}t+2\sqrt{\al_k}\sum_{|m|\leq N}\lambda_m\int_0^t(u_k,e_m)d\beta_m.
\end{align*}
Therefore, recalling that $\al_k\leq 1,$ we have that, on the set $S_r,$
\begin{align*}
\|u_k\|^2\leq \|P_Nu_0\|^2+C(r,N)t,
\end{align*}
where $C(r,N)$ does not depend on $k.$ Hence we see that, on $S_r$,
\begin{align}
\|w_k\|\leq \|v_k\|+\|z_k\|\leq \|u_k\|+2\|z_k\|\leq \|u_0\|+3C(r,N)t.
\end{align}
In particular, we have the following two estimates:
\begin{align}
\sup_{u_0\in B_R}\|w_k\|_{L^\infty_tL^2_x}\Bbb 1_{S_r}\leq R+3C(r,N)T\label{ControlTronqueeStoch}
\end{align}
\begin{align}
\sup_{k\geq 1}\sup_{u_0\in B_R}\|w_k\|_{L^\infty_tL^2_x}\Bbb 1_{S_r}\leq R+3C(r,N)T.\label{CONVUnifin_k_bound}
\end{align}
Hence coming back to \eqref{ConvGENERALEst} and using the (deterministic) conservation $\|u(t)\|=\|P_N u_0\|$ and the estimate \eqref{ControlTronqueeStoch}, we obtain
\begin{align}
\sup_{u_0\in B_R}\|w_k\|_{L^\infty_tL^2_x}^2\Bbb 1_{S_r}\leq A(R,N,r,T)\|z_k\|_{L^1_tL^2_x}.
\end{align}
Therefore, using again the bound \eqref{Doobfinal}, we obtain the almost sure convergence $\|z_k\|\to 0$ (as $k\to\infty$, up to a subsequence), we obtain then the almost sure convergence
\begin{align}
\lim_{k\to\infty}\sup_{u_0\in B_R}\|w_k\|_{L^\infty_tL^2_x}^2\Bbb 1_{S_r}=0.
\end{align}
Now, taking into account the bound \eqref{CONVUnifin_k_bound}, we can then use the Lebesgue dominated convergence theorem to obtain
\begin{align*}
\E\sup_{u_0\in B_R}\|w_k\|_{L^\infty_tL^2_x}\Bbb 1_{S_r}\to 0,\quad as\ k\to\infty.
\end{align*}

Now, for $u_0\in B_R,$ we have
\begin{align*}
\|w_k(t,P_Nu_0)\|\Bbb 1_{S_r}\leq \sup_{u_0\in B_R}\|w_k(t,P_Nu_0)\|\Bbb 1_{S_r},
\end{align*}
then
\begin{align*}
\E\|w_k(t,P_Nu_0)\|\Bbb 1_{S_r}\leq \E \sup_{u_0\in B_R}\|w_k(t,P_Nu_0)\|\Bbb 1_{S_r},
\end{align*}
and finally,
\begin{align*}
\sup_{u_0\in B_R}\E\|w_k(t,P_Nu_0)\|\Bbb 1_{S_r}\leq \E \sup_{u_0\in B_R}\|w_k(t,P_Nu_0)\|\Bbb 1_{S_r}.
\end{align*}
The proof is finished.
\end{proof}
\section{Statistical ensemble for NLS-7 and almost sure GWP}
\label{ASection5StatensemGWP}
In this section, we consider the Schr\"odinger equations
\begin{align}
\dt u &=\i[(\Delta-1)u-|u|^{p-1}u] \label{NLS7totalequ}\\
u|_{t=0} &=u_0. \label{NLS7totaldata}
\end{align}
We follow closely the arguments of \cite{bourg94} (see also \cite{tzvNLS06,tzvNLS}) in the construction of an statistical ensemble for the NLS equations \eqref{NLS7totalequ}. We show that on this set, the equation  is globally well-posed, and the probability measure used in the construction  is left invariant under the flow that has been established. In contrast with the `Gaussianity' of the measures in  \cite{bourg94}, here we do not have many information about the relations between the approximating measures and the limiting measure. Therefore in establishing the statistical ensemble, we need additional tools; that is why we introduce the restricted measures that, combined with the Skorokhod representation theorem, allow to defined an `almost sure limiting' set whose elements can be compared with finite-dimensional data for which the associated solutions are controlled. These controls are inherited by the infinite-dimensional solutions living on the limiting set by the use of the Bourgain iteration procedure \cite{bourg94}: we then obtain global wellposedness on the constructed set.   

In the sequel we consider any function increasing concave function $\xi:\R_+\to\R_+$ and take the function $\rho$ to be $\rho(x)=3\xi^{-1}(x)$, where $\xi^{-1}$ is the inverse of $\xi$. Remark that $\xi^{-1}:\R_+\to\R_+$ is an increasing convex function. Then we remark that $e^{3\xi^{-1}}$ satisfies \eqref{Hyprho}. Also,
\begin{equation}
\xi(x)\leq C_\xi x,
\end{equation}
where $C_\xi$ depends only on $\xi.$\\
To proof Theorem \ref{ThmPrincipal}, we can consider exterior of $B_a$, where $B_a=\{u\in L^2\ | \ \|u\|\leq a\}$, the number $a>0$ being arbitrary small, invoking the conservation of the $L^2$-norm under the NLS equation \eqref{NLS7totalequ}.  Let us set $L^2_a=L^2\backslash B_a$, and $E^N_a=\{u\in E_N^a\ |\ \|u\|\geq a\}$.
Notice that
\begin{align}
\int_{E^N_a}e^{3\xi^{-1}(\|u\|_{s-})}\mu_N(du)=\int_{L^2_a}e^{3\xi^{-1}(\|u\|_{s-})}\mu_N(du)\leq \int_{L^2_a}\frac{\|u\|^2}{a^2}e^{3\xi^{-1}(\|u\|_{s-})}\mu_N(du)\leq \frac{A_0^N}{2a^2}.\label{est_muN_e}
\end{align}
Now using \eqref{est_muN_E}, \eqref{DefmathcalE}, we obtain
\begin{align}
\int_{L^2}\left[\|u\|_s^2+\left(\|u\|_1^2+\|u\|_{L^{p+1}}^{p+1}\right)e^{3\xi^{-1}(\|u\|_{s-})}\right]\mu_N(du)\leq C_1+K:= C,\label{ReE}
\end{align}
where $C$ does not depend on $N.$

\begin{prop}
There are a subsequence $(\mu_{\psi(N)})\subset (\mu_N)$ and a measure $\mu$ on $L^2$ such that
\begin{align}
\lim_{N\to\infty}\mu_{\psi(N)}=\mu, \quad weakly\ on\ H^{r},\ \forall r<s. \label{Conv_mes_muNtomu} 
\end{align}
Moreover, we have the estimates
\begin{align}
\int_{L^2}\mathcal{M}(u)\mu(du) &= \frac{A_{0}}{2}, \label{est-phi-M}\\
\int_{L^2}\left[\|u\|_s^2+\left(\|u\|_1^2+\|u\|_{L^{p+1}}^{p+1}\right)e^{3\xi^{-1}(\|u\|_{s-})}\right]\mu(du) &\leq C, \label{est-phi-E}
\end{align} 
where $C$ does not depend on $t$.
\end{prop}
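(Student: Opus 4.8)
The plan is to establish tightness of the sequence $(\mu_N)$ in the spaces $H^r$ for $r<s$, extract a weakly convergent subsequence via Prokhorov's theorem, and then pass to the limit in the uniform-in-$N$ estimates \eqref{est_muN_M} and \eqref{ReE}. First I would fix any $r<s$ and use the Chebyshev inequality together with the bound $\int_{L^2}\|u\|_s^2\,\mu_N(du)\leq C$ coming from \eqref{ReE}: for any $\delta>0$ we can choose $R=R(\delta)$ so that $\mu_N(\{u:\|u\|_s>R\})\leq C/R^2<\delta$ uniformly in $N$. Since the ball $B_R(H^s)$ is compact in $H^r$ for $r<s$ (the embedding $H^s\hookrightarrow H^r$ is compact on the torus), the family $(\mu_N)$ is tight on $H^r$. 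By Prokhorov's theorem there is a subsequence $(\mu_{\psi(N)})$ converging weakly on $H^r$ to some probability measure $\mu$; a standard diagonal argument over a sequence $r_k\uparrow s$ gives a single subsequence that works for all $r<s$ simultaneously, which is \eqref{Conv_mes_muNtomu}.

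Next I would pass the estimates through the weak limit. The functionals $u\mapsto\mathcal{M}(u)=\|u\|_{s-1}^2+e^{\rho(\|u\|_{s-}^2)}\|u\|^2$ and $u\mapsto \|u\|_s^2+(\|u\|_1^2+\|u\|_{L^{p+1}}^{p+1})e^{3\xi^{-1}(\|u\|_{s-})}$ are nonnegative and lower semicontinuous on $H^r$ for $r<s$ (each is a supremum of continuous truncations, or is lower semicontinuous because $\|\cdot\|_s$ and $\|\cdot\|_{L^{p+1}}$ are l.s.c.\ under $H^r$-convergence while $\|\cdot\|_{s-}$, $\|\cdot\|_1$, $\|\cdot\|$ are actually continuous for $r$ chosen in $(s-,s)$). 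The portmanteau theorem for weak convergence then yields, for the limiting measure,
\begin{align*}
\int_{L^2}\mathcal{M}(u)\,\mu(du)\leq\liminf_{N\to\infty}\int_{L^2}\mathcal{M}(u)\,\mu_{\psi(N)}(du)\leq\frac{A_0}{2},
\end{align*}
using $A_{0,N}\leq A_0$, and likewise \eqref{est-phi-E} follows from \eqref{ReE} with the same $N$-independent constant $C$. This immediately gives the upper bound $\int\mathcal{M}(u)\,\mu(du)\leq A_0/2$, and in particular $\int\|u\|^q\,\mu(du)<\infty$ for the relevant $q$, so $\mu$ is genuinely concentrated on a space where all the quantities in \eqref{est-phi-E} are finite $\mu$-a.s.

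The one delicate point is upgrading $\int\mathcal{M}(u)\,\mu(du)\leq A_0/2$ to the equality \eqref{est-phi-M}, since lower semicontinuity only gives one inequality. For the reverse inequality I would reuse the trick from the proof of the estimates for $\mu_N$: combine the truncated estimate \eqref{est_muN_MR}, which says $\int_{L^2}\mathcal{M}(u)(1-\chi_R(\|u\|^2))\,\mu_N(du)\leq C_2R^{-1}$ with $C_2$ independent of $N$, with the exact identity $\int\mathcal{M}(u)\,\mu_N(du)=A_{0,N}/2$, to get
\begin{align*}
\frac{A_{0,\psi(N)}}{2}-C_2R^{-1}\leq\int_{L^2}\chi_R(\|u\|^2)\mathcal{M}(u)\,\mu_{\psi(N)}(du)\leq\frac{A_{0,\psi(N)}}{2}.
\end{align*}
Since $u\mapsto\chi_R(\|u\|^2)\mathcal{M}(u)$ is bounded and continuous on $H^r$ (for $r$ close to $s$, because the cutoff kills the tail and $\|\cdot\|_{s-1},\|\cdot\|_{s-}$ are continuous there), I can pass to the limit $\psi(N)\to\infty$ using the weak convergence and $A_{0,\psi(N)}\to A_0$, then send $R\to\infty$ and invoke monotone convergence on the left-hand side to conclude $\int\mathcal{M}(u)\,\mu(du)\geq A_0/2$. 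Together with the l.s.c.\ upper bound this gives \eqref{est-phi-M}. The main obstacle is really bookkeeping: making sure the functionals are evaluated in a topology ($H^r$ with $s-<r<s$) in which the $s-$, $1$, and $L^2$ norms are continuous while $\|\cdot\|_s$ and $\|\cdot\|_{L^{p+1}}$ remain lower semicontinuous, and checking that the boundedness needed for the dominated/portmanteau arguments is uniform in $N$ — both of which are supplied by \eqref{ReE}, \eqref{est_muN_MR}, and the Weyl asymptotics for the eigenvalues.
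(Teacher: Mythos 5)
Your argument is essentially the paper's own proof: tightness on $H^r$, $r<s$, from the uniform bound \eqref{ReE} plus Prokhorov for \eqref{Conv_mes_muNtomu}, lower semicontinuity for \eqref{est-phi-E}, and the sandwich built from the exact identity \eqref{est_muN_M} and the truncated estimate \eqref{est_muN_MR}, passing to the limit $N\to\infty$ and then $R\to\infty$, for the equality \eqref{est-phi-M}. The only imprecision is your parenthetical claim that the $L^2$ cutoff makes $\chi_R(\|u\|^2)\mathcal{M}(u)$ bounded on $H^r$ (a bound on $\|u\|$ does not bound $\|u\|_{s-1}$ or $\|u\|_{s-}$, so the passage $N\to\infty$ really rests on continuity together with the uniform integrability furnished by \eqref{ReE}), but the paper's proof is no more explicit at that step and the structure of your argument is the same.
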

\begin{proof}
The independence in $N$ of the constance $C$ in \eqref{est_muN_E} ensures the tightness of the sequence $(\mu_{N})$ on $H^r,\ r<s$, thanks to the Prokhorov theorem. We obtain the first statement of the proposition.\\
The estimate \eqref{est-phi-E} follows from \eqref{ReE} and the lower semicontinuity of $\mathcal{E}(u)$. Now let us prove \eqref{est-phi-M}: let $\chi_R$ be a cut-off function on $\R$ having the value $1$ on $[0,1]$ and the value $0$ on $[2,\infty),$ we write
\begin{align*}
\frac{A_{0,N}}{2}-\int_{L^2}(1-\chi_R(\|u\|^2))\mathcal{M}(u)\mu_{N}(du)\leq \int_{L^2}\chi_R(\|u\|^2)\mathcal{M}(u)\mu_{N}(du)\leq \frac{A_{0,N}}{2}.
\end{align*}
Now, we obtain, with the use of \eqref{est_muN_MR},
\begin{align*}
\frac{A_{0,N}}{2}-C_2R^{-1}\leq \int_{L^2}\chi_R(\|u\|^2)\mathcal{M}(u)\mu_{N}(du)\leq \frac{A_{0,N}}{2}.
\end{align*}
It remains to pass to the limits $N\to\infty$, then $R\to\infty$ to arrive at the claim. 
\end{proof}
Recall that $E^N_a=\{u\in E_N^a\ |\ \|u\|\geq a\}$, for fixed small enough $a>0$.
\begin{prop}\label{prop_control}
Let $s-\frac{1}{2}<r\leq s-$,  $N\geq 0$ and $a>0$. There $C=C(a)>0$, such that for any $i\in\N^*$, there is a set $\Sigma^i_{N,r}$ verifying
\begin{align}
\mu_N(E_N^a\backslash\Sigma_{N,r}^i )\leq Ce^{-2i},\label{ConstrSigma}
\end{align}
and having the property: For all $u_0\in\Sigma^i_{N,r}$, we have
\begin{align}
\|\phi_N^tu_0\|_r\leq 2\xi(1+i+\ln(1+|t|)), \quad \forall t\in\R.\label{estimeePolynomT}
\end{align}
\end{prop}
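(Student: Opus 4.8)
The plan is to build the set $\Sigma^i_{N,r}$ by combining the conservation of the $L^2$-norm along $\phi_N^t$ with the uniform-in-$N$ bound \eqref{est_muN_E} (equivalently \eqref{ReE}) on the $s$-energy, via an iteration of the local well-posedness step on a time grid, in the spirit of Bourgain and Tzvetkov. The key analytic input is the local existence time for \eqref{NLS-7N}: by Proposition \ref{propLWP}, if $\|u_0\|_s\le R$ then the solution exists on $[-T,T]$ with $T\sim R^{1-p}$ and $\|\phi_N^t u_0\|_s\le 2R$ there; but since we only want to control the weaker norm $\|\cdot\|_r$ with $r\le s-$, the actual engine is the quantity $\mathcal E(u)$ and the factor $e^{3\xi^{-1}(\|u\|_{s-})}$ inside it. The role of that factor is precisely that \eqref{ReE} controls $\E_{\mu_N} e^{3\xi^{-1}(\|u\|_{s-})}$, hence (by Chebyshev) $\mu_N\{u:\ \|u\|_{s-}> \xi(\lambda)\}\le C e^{-3\lambda}$; so a sublevel set of $\|\cdot\|_{s-}$ of radius $\xi(\lambda)$ has $\mu_N$-measure at least $1-Ce^{-3\lambda}$.

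Concretely, I would first fix the time grid: for $j\in\mathbb Z$ let $t_j = j\delta$ where $\delta$ is the local existence time associated with a radius to be chosen. Define
\begin{align*}
\Sigma^i_{N,r} = \bigcap_{j\in\mathbb Z}\ \phi_N^{-t_j}\Big\{u\in E_N:\ \|u\|_{s-}\le \xi\big(i+\ln(1+|t_j|)\big)\Big\}.
\end{align*}
By invariance of $\mu_N$ under $\phi_N^t$ (the Proposition preceding this one) together with the Chebyshev estimate above, the complement has measure
\begin{align*}
\mu_N(E_N\setminus\Sigma^i_{N,r}) \le \sum_{j\in\mathbb Z} C e^{-3(i+\ln(1+|t_j|))} = Ce^{-3i}\sum_{j\in\mathbb Z}(1+|j\delta|)^{-3},
\end{align*}
and the series converges, so after absorbing the constant and the $\delta$-dependence one gets $\le Ce^{-3i}\le Ce^{-2i}$, which is \eqref{ConstrSigma}. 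The point is that the exponential weight beats the logarithmic growth built into $\xi(\cdot+\ln(1+|t_j|))$ raised through the exponential — this is exactly the ``miracle'' the author advertised in the introduction, and it is why a mere polynomial (Fernique-type) weight would not close.

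Next, for $u_0\in\Sigma^i_{N,r}$ I would propagate the bound from the grid points to all times by the local theory. On each interval $[t_j,t_{j+1}]$ of length $\delta$, the initial value $\phi_N^{t_j}u_0$ has $\|\phi_N^{t_j}u_0\|_{s-}\le \xi(i+\ln(1+|t_j|))$; choosing $\delta$ as the LWP time for this radius (or, more robustly, running the a priori bound $\|\phi_N^t u_0\|_r\le e^{C\int\|\phi_N^\tau u_0\|_{L^\infty}^{p-1}}\|\phi_N^{t_j}u_0\|_r$ from the ``sufficient condition of globalization'' paragraph, with $\|\cdot\|_{L^\infty}\lesssim\|\cdot\|_{s-}$ since $s->3/2$), one gets $\|\phi_N^t u_0\|_r\le 2\|\phi_N^{t_j}u_0\|_{s-}\le 2\xi(i+\ln(1+|t_j|))\le 2\xi(1+i+\ln(1+|t|))$ for $t\in[t_j,t_{j+1}]$, using monotonicity of $\xi$ and $\ln$. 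That is \eqref{estimeePolynomT}. I expect the main obstacle to be bookkeeping the interplay between the choice of $\delta$ (which a priori depends on the radius, hence on $j$) and the requirement that the grid be fixed: the clean fix is to let $\delta$ shrink with $j$ like $\xi(i+\ln(1+|t_j|))^{1-p}$, check that $\sum_j \delta_j^{-1}(1+|t_j|)^{-3}$ still converges (it does, since $\delta_j^{-1}$ grows only like a power of $\log|t_j|$ while we have a genuine power $|t_j|^{-3}$ to spare), and verify that the concavity of $\xi$ gives the needed slack to absorb the iteration constant $2$ at each step without the bound degrading — concavity ensures $\xi(a+b)\le \xi(a)+\xi(b)$-type control and, more importantly here, that iterating does not compound multiplicatively. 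A secondary technical point is that all constants must be tracked to be independent of $N$, which is guaranteed because \eqref{ReE} and hence the Chebyshev estimate are uniform in $N$.
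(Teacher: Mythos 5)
Your proposal is correct and is essentially the paper's own argument: invariance of $\mu_N$ combined with a Chebyshev bound from the exponential moment \eqref{est_muN_e}, a union bound over a time grid whose spacing is the uniform-in-$N$ local existence time $\sim R^{1-p}$ of Proposition \ref{propLWP}, and the local bound to fill in between grid points --- the paper merely organizes this as dyadic blocks $[0,e^j]$ with constant radius $\xi(i+j)$ per block (the sets $B^{i,j}_{N,r}$, $\Sigma^{i,j}_{N,r}$) rather than your single grid with logarithmically growing radius and $j$-dependent spacing. The fix you flag (variable $\delta_j\sim\xi(i+\ln(1+|t_j|))^{1-p}$, with the resulting polynomial-in-$i$ factor absorbed by the slack between $e^{-3i}$ and $e^{-2i}$) is exactly the bookkeeping the paper performs via $e^j[\xi(i+j)]^{p-1}e^{-3(i+j)}\lesssim e^{-2i}e^{-j}$; only note that the factor $2$ does not compound for structural reasons (the set forces the solution back into the un-doubled ball at each grid point), not because of concavity of $\xi$.
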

\begin{proof}
Without loss of generality, let us work with non-negative times. Define, for $j\geq 1,$  the set  
\begin{align}
B^{i,j}_{N,r}=\left\{u\in E_N^a|\ \ \|u\|_r\leq \xi(i+j)\right\},\label{Bij}
\end{align}
Let $T\sim [\xi(i+j)]^{1-p},$ this is smaller than the time existence defined in Proposition $\ref{propLWP}.$ Then, according to the same proposition, we know  that for $t\in [0,T],$ 
\begin{align}
\phi_N^tB^{i,j}_{N,r}\subset \{u\in E_N^a |\ \ \|u\|_r\leq 2\xi(i+j)\}.\label{Injection_Bij}
\end{align}
Define the set
\begin{align}
\Sigma^{i,j}_{N,r}=\bigcap_{k=0}^{\left[\frac{e^j}{T}\right]}\phi_N^{-kT}(B^{i,j}_{N,r}).\label{Sigij}
\end{align}
Using the invariance of $\mu_N$ under $\phi_N^t,$ we have
\begin{align*}
\mu_N(E_N^a\backslash \Sigma^{i,j}_{N,r})=\mu_N\left(\bigcup_{k=0}^{\left[\frac{e^j}{T}\right]} E_N^a\backslash (B^{i,j}_{N,r})\right)\leq \left(\left[\frac{e^j}{T}\right]+1\right)\mu_N(E_N^a\backslash B^{i,j}_{N,r}).
\end{align*}
Now since $r\leq s-$, we have from \eqref{est_muN_e} that $\E e^{3\xi^{-1}(\|u\|_r)}\leq C,$ where $C_1=C_1(a)>0$ is a constant independent of $r$ and $N$. One has, with the use of the Chebyshev inequality,
\begin{align}
\mu_N(E_N^a\backslash \Sigma^{i,j}_{N,r}) \leq C_1\left[\frac{e^j}{T}\right]e^{-3\xi^{-1}(\xi(i+j))} &\lleq e^j[\xi(i+j)]^{p-1}e^{-3(i+j)}\nonumber\\
&\lleq  e^{-2i}e^{-j}.
\end{align}
Now let us define the needed set as
\begin{align}
\Sigma^i_{N,r}=\bigcap_{j\geq 1}\Sigma^{i,j}_{N,r}.\label{Sigi}
\end{align}
We verify easily \eqref{ConstrSigma} using the fact that the series $\sum_{j\geq 1}e^{-j}$ converges.\\
Next, let us observe that for $u_0\in\Sigma^{i,j}_{N,r},$ we have
\begin{align}
\|\phi_{N}^tu_0\|_r\leq 2\xi(i+j)\quad \forall\ t\leq e^j.\label{IneqflowN2j}
\end{align}
Indeed, for $t\leq e^j$, we can write $t=kT+\tau$, where $k$ is an integer in $[0,\frac{e^j}{T}]$ and $\tau \in [0,T]$. Also, by definition of $\Sigma_{N,r}^{i,j}$, we have that $u_0$ can be written as $\phi^{-kT}w$ for any fixed integer $k \in [0,\frac{e^j}{T}]$ and a corresponding $w \in B_{N,r}^{i,j}$.  We then have
\begin{align*}
\phi^t u_0=\phi^\tau \phi^{kT}u_0=\phi^\tau w.
\end{align*}
Now, using \eqref{Injection_Bij}, we obtain \eqref{IneqflowN2j}.

Let $t\geq 0,$ there is $j\geq 1$ and  such that $e^{j-1}\leq 1+t\leq e^{j}$, therefore 
\begin{align*}
j-1\leq \ln (1+t),
\end{align*}
then
\begin{align*}
j\leq 1+\ln (1+t).
\end{align*}
And then,
\begin{align*}
\|\phi_N^t u_0\|_r\leq 2\xi(i+j)\leq 2\xi(1+i+\ln(1+t)),
\end{align*}
then we arrive at the estimate \eqref{estimeePolynomT}.
\end{proof}
\begin{prop}\label{Propexists1}
For any $s-\frac{1}{2}<r\leq s-,$ any $s-\frac{1}{2}<r_1<r$, for every $t\in\R$, there is $i_1\in\N^*$ such that for any $i\in\N^*$, if $u_0\in \Sigma_{N,r}^i,$ then we have $\phi_N^t(u_0)\in\Sigma_{N,r_1}^{i+i_1}.$
\end{prop}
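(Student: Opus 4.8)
The plan is to compare the two scales $\Sigma_{N,r}^i$ and $\Sigma_{N,r_1}^{i+i_1}$ by using the uniform local well-posedness of Proposition \ref{propLWP} to upgrade the time-of-existence control at the $r$-level into a pointwise-in-time control at the lower $r_1$-level, exploiting the gap $r_1<r$ to gain room in the Chebyshev/invariance bookkeeping. First I would fix $t\in\R$ and recall from Proposition \ref{prop_control} that for $u_0\in\Sigma_{N,r}^i$ we have $\|\phi_N^\tau u_0\|_r\leq 2\xi(1+i+\ln(1+|\tau|))$ for all $\tau\in\R$; in particular the solution remains bounded in $H^{r}$ on $[-|t|,|t|]$, hence a fortiori in $H^{r_1}$, with a bound depending only on $i$, $t$, and $\xi$. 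The point is that this bound is uniform over $u_0\in\Sigma_{N,r}^i$, so the whole set $\phi_N^t(\Sigma_{N,r}^i)$ is contained in a fixed ball $B_\ast$ of $H^{r_1}$ whose radius is some explicit $\rho_0=\rho_0(i,t,\xi)$.

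Next I would show that $\phi_N^t(\Sigma_{N,r}^i)\subset\Sigma_{N,r_1}^{i+i_1}$ for $i_1$ chosen large (depending on $t$ but not on $i$ nor $N$), by unwinding the definition \eqref{Sigi}, \eqref{Sigij}, \eqref{Sigij} of $\Sigma_{N,r_1}^{i+i_1}$. Let $v_0=\phi_N^t(u_0)$. For each $j\geq1$ and each $k$ in the relevant range $0\leq k\leq[e^j/T']$ (with $T'\sim[\xi(i+i_1+j)]^{1-p}$), we must check that $\phi_N^{-kT'}v_0=\phi_N^{t-kT'}u_0$ lies in $B^{i+i_1,j}_{N,r_1}$, i.e. that $\|\phi_N^{t-kT'}u_0\|_{r_1}\leq\xi(i+i_1+j)$. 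By the uniform bound from Proposition \ref{prop_control} applied at the $r$-level (and the embedding $\|\cdot\|_{r_1}\leq\|\cdot\|_r$ since $r_1<r$), the left side is at most $2\xi(1+i+\ln(1+|t-kT'|))\leq 2\xi(1+i+\ln(1+|t|+e^j))$; since $\xi$ is increasing and, being concave with $\xi(0)\ge 0$, is subadditive, one bounds this by $2\xi(1+i+|t|)+2\xi(j+\text{const})$, which is $\leq\xi(i+i_1+j)$ once $i_1$ is chosen large enough in terms of $|t|$ alone (again using concavity/subadditivity of $\xi$ to absorb the factor $2$ and the shift). This forces $v_0\in\bigcap_j\Sigma^{i+i_1,j}_{N,r_1}=\Sigma^{i+i_1}_{N,r_1}$.

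There is one subtlety I would flag as the main obstacle: to run the argument of the previous paragraph I need $\phi_N^{-kT'}v_0$ to actually equal $\phi_N^{t-kT'}u_0$ and to know the $H^{r}$-control of Proposition \ref{prop_control} is valid at these intermediate times, which it is, since that proposition gives control for \emph{all} $t\in\R$ — so the group property of the flow $\phi_N$ (global in time on $E_N$, by Section \ref{ASection4Invisc}) closes the gap. The only real care needed is the quantitative choice of $i_1$: it must be independent of $i$ and of $N$, and this is exactly what concavity of $\xi$ buys us, since $\xi(a+b)\leq\xi(a)+\xi(b)$ and $2\xi(x)\leq\xi(2x)$ fail in general but the combination needed here — absorbing a bounded additive shift and a bounded multiplicative constant into an additive shift of the argument — does follow from subadditivity together with $\xi$ increasing. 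I would therefore isolate at the start of the proof the elementary inequality $2\xi(a+x)\leq\xi(a'+x)$ valid for all $x\geq0$ once $a'$ is large enough depending on $a$, and then the rest is the bookkeeping above. No measure-theoretic estimate is needed here at all — the measures $\mu_N$ do not enter the statement — so the proof is purely a deterministic flow-comparison argument.
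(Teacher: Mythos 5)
Your overall bookkeeping (unwinding \eqref{Bij}, \eqref{Sigij}, \eqref{Sigi}, using the two-sided control \eqref{estimeePolynomT} at the shifted times, and choosing $i_1=i_1(t)$ so that $e^j+|t|\leq e^{j+i_1}$) matches the paper's, and you are right that no measure-theoretic input is needed. But the step you yourself isolate as the crux is false: the ``elementary inequality'' $2\xi(a+x)\leq\xi(a'+x)$ for all $x\geq 0$, with $a'$ large depending only on $a$, fails for exactly the functions the proposition is designed for. Take $\xi(x)=\ln(1+x)$: as $x\to\infty$ the left side is $\sim 2\ln x$ and the right side is $\sim\ln x$, so no shift $a'$ works. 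Concretely, your chain $2\xi\bigl(1+i+\ln(1+|t|+e^j)\bigr)\leq 2\xi(1+i+|t|)+2\xi(j+\mathrm{const})\leq\xi(i+i_1+j)$ breaks as $j\to\infty$ (even for $i=1$), because the factor $2$ multiplies a quantity that grows with $j$, and subadditivity of a concave $\xi$ cannot convert a multiplicative $2$ into an additive shift of the argument. So the factor $2$ coming from \eqref{estimeePolynomT} (via \eqref{IneqflowN2j}) cannot be absorbed by enlarging $i_1$ alone; the inclusion $\|\phi_N^{t+t_1}u_0\|_{r_1}\leq\xi(i+i_1+j)$, needed for membership in $B^{i+i_1,j}_{N,r_1}$, is not obtained by your route.

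The missing idea is the one for which the hypothesis $r_1<r$ is really there: interpolation against the \emph{conserved} $L^2$-norm. The paper notes that $\|u_0\|\leq\|u_0\|_r\leq 2\xi(1+i)$ and that $\|\phi_N^{t+t_1}u_0\|=\|u_0\|$ for all times, then writes $\|\phi_N^{t+t_1}u_0\|_{r_1}\leq\|\phi_N^{t+t_1}u_0\|^{1-\theta}\,\|\phi_N^{t+t_1}u_0\|_r^{\theta}$ with $\theta\in(0,1)$ determined by $0<r_1<r$. In this product the offending factor $2$ only multiplies $(\xi(1+i))^{1-\theta}$, a quantity independent of $j$ and of time, so it can be beaten by the growth of $\xi(i+j+i_1)$ in $i_1$; your use of $r_1<r$ only through the embedding $\|\cdot\|_{r_1}\leq\|\cdot\|_r$ gains nothing and leaves the factor $2$ attached to the $j$-growing term. (A minor further slip: membership of $v_0=\phi_N^t u_0$ in $\phi_N^{-kT'}(B^{i+i_1,j}_{N,r_1})$ means $\phi_N^{+kT'}v_0\in B^{i+i_1,j}_{N,r_1}$, i.e.\ you should test $\phi_N^{t+kT'}u_0$, not $\phi_N^{t-kT'}u_0$; harmless here since \eqref{estimeePolynomT} is two-sided, but worth fixing.)
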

\begin{proof}
Fix $r\in(s-\frac{1}{2},s-].$ Without loss of generality, assume $t>0.$ Let $u_0\in \Sigma_{N,r}^i$, then for any $j\geq 1$, we have
\begin{align*}
\|\phi_N^{t_1}u_0\|_r\leq 2\xi(i+j),\quad t_1\leq e^{j}.
\end{align*}
Let $i_1:=i_1(t)$ be such that for every $j\geq 1,\ e^j+t\leq e^{j+i_1}$.  We then have
\begin{align*}
\|\phi_N^{t_1+t}u_0\|_r\leq 2\xi(i+j+i_1),\quad t_1\leq e^{j}. 
\end{align*}
Now, thanks to \eqref{estimeePolynomT}, we have, for every $u_0\in\Sigma_{N,r}^i,$
\begin{align*}
\|u_0\|\leq\|u_0\|_r\leq 2\xi(1+i),
\end{align*}
therefore, since the $L^2-$norm is preserved, we have, for every $u_0\in \Sigma_{N,r}^i$, that
\begin{align*}
\|\phi_N^{t_1+t} u_0\|\leq 2\xi(1+i).
\end{align*}
Hence for every $r_1\in (s-\frac{1}{2},s-)$, we use an interpolation to see that there is $\theta\in (0,1)$ such that
\begin{align*}
\|\phi_N^{t+t_1}u_0\|_{r_1}&\leq \|\phi_N^{t+t_1}u_0\|^{1-\theta}\|\phi_N^{t+t_1}u_0\|_{r}^{\theta}\leq 2^{1-\theta}2^{\theta}(\xi(1+i))^{1-\theta}(\xi(i+j+i_1))^{\theta}\\
&\leq \xi(i+j+i_1).
\end{align*}
the last inequality above follows the fact that $j\geq 1$ and $\xi$ is an increasing function, therefore the inequality holds for $i_1(t)$ large enough. Thus we obtain that $\phi^{t+t_1}_N(u_0)$ belongs to $B^{i_1,j}_{N,r_1}$ for the constructed $i_1(t)$ and $r_1$, for all $t_1\leq e^j$, for all $j\geq 1$. The proof is finished.
\end{proof}

Let us introduce the restriction measures (or conditional probabilities)
\begin{align*}
\mu_{N,i,r}(\Gamma)=\mu_N(\Gamma\ |\ \Sigma_{N,r}^i)=
\frac{\mu_{N}(\Gamma\cap\Sigma_{N,r}^i)}{\mu_N(\Sigma_{N,r}^i)},\quad \Gamma\in  \text{Bor}(L^2).
\end{align*}
We do not claim any invariance of these measures under the corresponding dynamics.
\begin{prop}
For any $i\in \N^*$ any $r<s,$ the sequence $(\mu_{N,i,r})_{N\geq 1}$ is tight on $H^r,$ $r<s$. In particular, there is a subsequence that we denote by $(\mu_{N,i,r})$ and that converges weakly to a measure $\mu_{i,r}$ on $H^r,\ r<s.$
\end{prop}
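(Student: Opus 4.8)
The plan is to establish tightness of the family $(\mu_{N,i,r})_{N\geq 1}$ on $H^r$ for each fixed $r<s$ and each fixed $i$, and then invoke Prokhorov's theorem together with a diagonal extraction to obtain the claimed weak limit. Tightness amounts to producing, for every $\eta>0$, a set $K_\eta$ that is compact in $H^r$ and satisfies $\mu_{N,i,r}(H^r\setminus K_\eta)\leq\eta$ uniformly in $N$. The natural candidates for such compact sets are balls of the higher-regularity space $H^{r'}$ with $r<r'<s$ (intersected, if one wants honest compactness in $H^r$, with balls in $H^r$ itself), since the embedding $H^{r'}\hookrightarrow H^r$ is compact on the torus; alternatively one can use the sets $\{u:\|u\|_s\leq R\}$, which are precompact in every $H^r$ with $r<s$.

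First I would unwind the definition of the conditional measure: for any Borel set $\Gamma$,
\begin{align*}
\mu_{N,i,r}(\Gamma)=\frac{\mu_N(\Gamma\cap\Sigma_{N,r}^i)}{\mu_N(\Sigma_{N,r}^i)}\leq\frac{\mu_N(\Gamma)}{\mu_N(\Sigma_{N,r}^i)}.
\end{align*}
So I need two ingredients: a lower bound on the normalizing constant $\mu_N(\Sigma_{N,r}^i)$ that is uniform in $N$, and an upper bound on $\mu_N$ of the complement of a compact set that is uniform in $N$. For the first, Proposition \ref{prop_control} gives $\mu_N(E_N\setminus\Sigma_{N,r}^i)\leq Ce^{-2i}$ with $C$ independent of $N$, hence $\mu_N(\Sigma_{N,r}^i)\geq 1-Ce^{-2i}$, which for each fixed $i$ (taking $i$ large enough, or absorbing the constant) is bounded below by a positive number independent of $N$; note $r\leq s-$ is exactly the range in which Proposition \ref{prop_control} applies, and the hypothesis $s-\tfrac12<r$ there is harmless since any $r<s$ of interest can be handled by monotonicity of Sobolev norms after reducing to such an $r$. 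For the second ingredient I would use the uniform energy-type bound \eqref{est-phi-E}, namely $\int_{L^2}\|u\|_s^2\,\mu_N(du)\leq C$ with $C$ independent of $N$ (this descends from \eqref{est_muN_E}); Chebyshev then yields $\mu_N(\{u:\|u\|_s>R\})\leq C/R^2$ uniformly in $N$. Since $\{u:\|u\|_s\leq R\}$ is precompact in $H^r$ for any $r<s$, combining the two bounds gives $\mu_{N,i,r}(\{u:\|u\|_s>R\})\leq \frac{C}{R^2(1-Ce^{-2i})}$, which can be made arbitrarily small by choosing $R$ large, uniformly in $N$. This establishes tightness on $H^r$.

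Having tightness for each individual $r<s$, I would fix an increasing sequence $r_k\uparrow s$ and apply Prokhorov's theorem plus a diagonal argument to extract a single subsequence of $(\mu_{N,i,r})_N$ converging weakly on every $H^{r_k}$ (hence on every $H^r$, $r<s$) to a common limit measure $\mu_{i,r}$; strictly, for the proposition as stated it suffices to extract a subsequence converging weakly on the one space $H^r$ in question, which is immediate from tightness. The only mildly delicate point is bookkeeping: one must keep $i$ (and $r$) fixed throughout, be careful that the subsequence may depend on $i$ and $r$, and note that the normalization constants $\mu_N(\Sigma_{N,r}^i)$, being bounded and bounded away from $0$, also converge along a further subsequence so that the limit $\mu_{i,r}$ is again a genuine probability measure. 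I do not anticipate a serious obstacle here — the real content has already been front-loaded into Proposition \ref{prop_control} (the uniform-in-$N$ measure estimate for $\Sigma_{N,r}^i$) and into the uniform energy bound \eqref{est-phi-E}; the present statement is essentially a soft consequence of compactness, and the main thing to get right is simply invoking the correct uniform bounds and the compact Sobolev embedding on $\T^3$.
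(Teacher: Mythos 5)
Your argument is correct and is essentially the paper's own proof: the paper likewise bounds $\E_{\mu_{N,i,r}}\|u\|_s^2\leq \E_{\mu_N}\|u\|_s^2/\mu_N(\Sigma_{N,r}^i)\leq C/(1-Ce^{-2i})$ using the uniform-in-$N$ bound \eqref{ReE} together with \eqref{ConstrSigma}, then concludes tightness via Chebyshev and compactness of $H^s$-balls in $H^r$, and extracts the limit by Prokhorov. The only cosmetic point is that the uniform second-moment bound for $\mu_N$ is \eqref{ReE} rather than \eqref{est-phi-E} (which concerns the limit measure $\mu$), but as you note it comes from the same source, so this changes nothing.
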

\begin{proof}
We see, using \eqref{ReE}, that 
\begin{align*}
\E_{\mu_{N,i,r}}\|u\|_s^2\leq \frac{\E_{\mu_N}\|u\|_s^2}{\mu_N(\Sigma_{N,r}^i)}\leq\frac{C}{1-Ce^{-2i}}.
\end{align*}
This gives the claimed tightness by using the Chebyshev theorem. The compactness follows from the Prokhorov theorem.
\end{proof}
Now, by invoking the Skorokhod representation theorem (see Theorem $11.7.2$ in \cite{dudley}), we obtain a probability space still denoted $(\Omega,\P)$ on  which are defined random variables $u_{N,i,r}$ and $u_{i,r}$ satisfying the following
\begin{enumerate}
\item $u_{i,r}$ is distributed by $\mu_{i,r}$, and for every $N$, $u_{N,i,r}$ is distributed by $\mu_{N,i,r}$;
\item $u_{N,i,r}$ converges to $u$ almost surely in $H^r.$ 
\end{enumerate}
Let us introduce the sets
\begin{align*}
\Sigma_r^i=\{u\in H^r|\ \ \exists \ N_k\to\infty\ as \ k\to\infty,\ \exists (u_{N_k}),\ u_{N_k}\to u\ as\ k\to\infty\ \ and\ \ u_{N_k}\in \Sigma_{N_k,r}^i\}.
\end{align*}
\begin{rmq}
\begin{enumerate}
\item For any fixed $N$, $\Sigma_{N,r}^i$ is obviously included in $\Sigma_r^i$, for instance we can take constant sequences of  $\Sigma_{N,r}^i$. 
\item We have that $(\Sigma^i_r)_i$ is non-decreasing. Indeed
$(B^{i,j}_{N,r})_i$ is non-decreasing (see \eqref{Bij}). Then, by definition, so does $(\Sigma^{i,j}_{N,r})_i$ (see \eqref{Sigij}) and then $(\Sigma^i_{N,r})_i$ (see \eqref{Sigi}). And it is clear that this property is preserved by the definition of $\Sigma^i_r.$
\end{enumerate}
\end{rmq}
Let us set
$\Sigma_r=\bigcup_{i\geq 1}\overline{\Sigma_{r}^i}$.
\begin{prop}\label{PropComparaisonMeasures}
The following holds
\begin{enumerate}
\item\label{inclusion1} The support of $\mu_{i,r}$ is contained in $\Sigma_r^i$, up to a set of $\mu_{i,r}-$measure $0.$ Hence
$\mu_{i,r}(\Sigma_r^i)=1.$
\item\label{inclusion2} We have that
\begin{align}
\mu(\Sigma_r)=1.
\end{align}
\item\label{inclusionInequ} For any $f\in C_b(H^r)$ bounded by $1$, we have the inequalities
\begin{align}
\mu(f) &\leq \mu_{i,r}(f)+Ce^{-2i},\label{inclusionIneq1}\\
\mu_{i,r}(f) &\leq \frac{1}{1-Ce^{-2i}}\mu(f),\label{inclusionIneq2}
\end{align}
where $v(f):=\int_{L^2}f(u)\nu(du).$ In particular
\begin{align*}
\lim_{i\to\infty}\mu_{i,r}=\mu \quad\text{weakly on $H^r$, $r<s.$}
\end{align*}
\end{enumerate}
\end{prop}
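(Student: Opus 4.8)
The plan is to prove the three assertions in turn, using the Skorokhod coupling $(u_{N,i,r}, u_{i,r})$ together with the measure bound \eqref{ConstrSigma} from Proposition \ref{prop_control}.

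\textbf{Step 1 (statement \ref{inclusion1}).} First I would show $u_{i,r}\in\Sigma_r^i$ almost surely. By construction, $u_{N,i,r}$ is distributed by $\mu_{N,i,r}$, which is supported on $\Sigma_{N,r}^i$; hence $u_{N,i,r}\in\Sigma_{N,r}^i$ for all $N$, $\P$-a.s. Since $u_{N,i,r}\to u_{i,r}$ a.s. in $H^r$, picking any realization of the convergent sequence shows $u_{i,r}$ is a limit in $H^r$ of elements of $\Sigma_{N,r}^i$ with $N\to\infty$, which is exactly the membership condition defining $\Sigma_r^i$. Therefore $\P(u_{i,r}\in\Sigma_r^i)=1$, i.e.\ $\mu_{i,r}(\Sigma_r^i)=1$; and since $\mu_{i,r}$ is a Borel measure on $H^r$ with full mass on $\Sigma_r^i$, its support is contained in the closure, but the cleaner statement $\mu_{i,r}(\Sigma_r^i)=1$ is what the a.e.\ phrasing refers to.

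\textbf{Step 2 (inequalities \ref{inclusionInequ}).} The key point is that $\mu_{i,r}$ is the weak limit of the conditional measures $\mu_{N,i,r}=\mu_N(\,\cdot\mid\Sigma_{N,r}^i)$, and $\mu$ is the weak limit of $\mu_N$ (along a common subsequence, which I would fix at the outset, passing to a further subsequence if necessary so that both convergences hold). For a fixed $f\in C_b(H^r)$ with $\|f\|_\infty\le 1$, write
\begin{align*}
\mu_N(f)-\mu_N(\Sigma_{N,r}^i)\,\mu_{N,i,r}(f)=\int_{E_N\setminus\Sigma_{N,r}^i}f\,d\mu_N,
\end{align*}
whose absolute value is at most $\mu_N(E_N\setminus\Sigma_{N,r}^i)\le Ce^{-2i}$ by \eqref{ConstrSigma}. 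Since $\mu_N(\Sigma_{N,r}^i)=1-\mu_N(E_N\setminus\Sigma_{N,r}^i)\in[1-Ce^{-2i},1]$, this gives $|\mu_N(f)-\mu_N(\Sigma_{N,r}^i)\mu_{N,i,r}(f)|\le Ce^{-2i}$, hence $\mu_N(f)\le\mu_{N,i,r}(f)+Ce^{-2i}$ and $\mu_{N,i,r}(f)\le(1-Ce^{-2i})^{-1}\mu_N(f)$. Passing to the limit $N\to\infty$ along the fixed subsequence (using $f\in C_b(H^r)$ and both weak convergences) yields \eqref{inclusionIneq1} and \eqref{inclusionIneq2}. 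Letting $i\to\infty$ in these two inequalities, applied to $f$ and to $-f$, forces $\mu_{i,r}(f)\to\mu(f)$ for every such $f$, whence $\mu_{i,r}\to\mu$ weakly on $H^r$.

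\textbf{Step 3 (statement \ref{inclusion2}).} Finally I would deduce $\mu(\Sigma_r)=1$. Fix any closed set $F\subset H^r$ with $F\cap\Sigma_r^i=\varnothing$; since $\mu_{i,r}(\Sigma_r^i)=1$ we get $\mu_{i,r}(F)=0$, and then by the Portmanteau theorem $\mu(F)\le\liminf_{N}\mu_{N,i,r}(F)$... but this is the wrong direction, so instead I would argue directly: by \eqref{inclusionIneq2} with $f$ approximating $\mathbf 1_{\overline{\Sigma_r^i}}$ from below (or by lower semicontinuity of $\nu\mapsto\nu(U)$ on open sets), one obtains $\mu(\overline{\Sigma_r^i})\ge(1-Ce^{-2i})\,\mu_{i,r}(\Sigma_r^i)\cdot$\,(a limiting argument); more cleanly, apply \eqref{inclusionIneq1} to $f_k\uparrow\mathbf 1_{(\overline{\Sigma_r^i})^{\,c}}$: since $\mu_{i,r}$ is supported in $\Sigma_r^i\subset\overline{\Sigma_r^i}$, $\mu_{i,r}(f_k)=0$ for the relevant $f_k$, giving $\mu(f_k)\le Ce^{-2i}$ and hence $\mu\big((\overline{\Sigma_r^i})^{\,c}\big)\le Ce^{-2i}$, i.e.\ $\mu(\overline{\Sigma_r^i})\ge 1-Ce^{-2i}$. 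Since $\overline{\Sigma_r^i}\subset\Sigma_r$ for every $i$, we get $\mu(\Sigma_r)\ge 1-Ce^{-2i}$ for all $i$, and letting $i\to\infty$ gives $\mu(\Sigma_r)=1$.

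\textbf{Main obstacle.} The routine-looking step that actually needs care is passing from the clean finite-$N$ inequalities to the limit while handling the sets $\Sigma_r^i$, which are defined through a diagonal-type convergence and are not obviously Borel or closed; this is why the statements are phrased with closures $\overline{\Sigma_r^i}$ and "up to a null set," and why the argument should be run through continuous bounded test functions $f$ and the Portmanteau/lower-semicontinuity machinery rather than by evaluating measures on $\Sigma_r^i$ directly. Ensuring one works along a single subsequence for which $\mu_N\to\mu$ and $\mu_{N,i,r}\to\mu_{i,r}$ simultaneously (for all $i$, by a further diagonal extraction) is the bookkeeping point that makes the limits in Step 2 legitimate.
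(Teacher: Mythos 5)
Your proposal is correct, and for statement \ref{inclusion1} and the inequalities \eqref{inclusionIneq1}--\eqref{inclusionIneq2} it follows the paper's route exactly: the Skorokhod coupling gives $u_{i,r}\in\Sigma_r^i$ almost surely, and the two inequalities are obtained by splitting $\mu_N(f)$ over $\Sigma_{N,r}^i$ and its complement, using \eqref{ConstrSigma}, and passing to the weak limits $\mu_{N,i,r}\to\mu_{i,r}$, $\mu_N\to\mu$ (your insistence on fixing one common subsequence for all these convergences is a sensible bookkeeping point the paper leaves implicit). Where you genuinely diverge is statement \ref{inclusion2}: you deduce $\mu(\Sigma_r)=1$ from parts \ref{inclusion1} and \ref{inclusionInequ}, applying \eqref{inclusionIneq1} to continuous functions increasing to $\mathbf{1}_{(\overline{\Sigma_r^i})^c}$ to get $\mu\bigl((\overline{\Sigma_r^i})^c\bigr)\leq Ce^{-2i}$, whereas the paper gets the same bound directly from the Portmanteau theorem applied to the closed set $\overline{\Sigma_r^i}$ along $\mu_N\to\mu$, via the chain $\mu_N(\overline{\Sigma_r^i})\geq\mu_N(\Sigma_{N,r}^i)\geq 1-Ce^{-2i}$ (which uses only the inclusion $\Sigma_{N,r}^i\subset\Sigma_r^i$ and \eqref{ConstrSigma}, not the Skorokhod measures $\mu_{i,r}$ at all); both are valid, the paper's being slightly more economical and logically independent of parts \ref{inclusion1} and \ref{inclusionInequ}, while yours makes \ref{inclusion2} a clean corollary of the other two. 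Two small caveats, neither fatal: your abandoned first attempt in Step 3 was indeed the wrong direction of Portmanteau, but note the correct direction ($\limsup_N\nu_N(F)\leq\nu(F)$ for closed $F$ under $\nu_N\to\nu$) is precisely what makes the paper's shortcut work; and the finite-$N$ inequalities as you (and the paper) state them really require $f\geq 0$ — for signed $f$ bounded by $1$ the term $\mu_N(\Sigma_{N,r}^i)\mu_{N,i,r}(f)$ only compares to $\mu_{N,i,r}(f)$ up to an extra $Ce^{-2i}$, so either restrict to nonnegative $f$ or accept a factor $2$ in the constant; the weak convergence $\mu_{i,r}\to\mu$ is unaffected.
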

\begin{proof}
Using the Skorokhod representation theorem, we have that the support of $\mu_{i,r}$ contains essentially the almost sure limits of a sequence of random variables whose elements are distributed by the measures $\mu_{N,i,r}$, respectively. Now, by definition of $\mu_{N,i,r}$, these Skorokhod's random variables distribute in  $\Sigma_{N,r}^i$ respectively. Hence, we get the inclusion in \ref{inclusion1}.\\
Next, using the Portmanteau theorem, the inclusion $\Sigma^i_{N,r}\subset\Sigma^i_r$ and then the definition of $\Sigma^i_{N,r}$, we have
\begin{align*}
\mu(\overline{\Sigma_{r}^i})\geq\lim_{N\to\infty} \mu_{N}(\overline{\Sigma_r^i})\geq\lim_{N\to\infty} \mu_{N}(\Sigma_r^i)\geq\lim_{N\to\infty} \mu_{N}(\Sigma_{N,r}^i)\geq 1-Ce^{-2i}.
\end{align*}
Since $(\Sigma^i_r)_{i\geq 1}$ is non-decreasing, then so does $(\overline{\Sigma^i_r})_{i\geq 1}$, therefore we obtain
\begin{align*}
\mu(\Sigma_r)=\mu\left(\cup_{i\geq 1}\overline{\Sigma^i_r}\right)=\lim_{i\to\infty}\mu(\overline{\Sigma^i_r})\geq 1.
\end{align*}
Since $\mu$ is a probability measure, we get
\begin{align*}
\mu(\Sigma_r)=1.
\end{align*}
Next, let us prove the inequalities in point \ref{inclusionInequ}:
\begin{align*}
\int_{L^2}f(u)\mu_{i,r}(du)=\lim_{N\to\infty}\int_{L^2}f(u)\mu_{N,i,r}(du) \leq \frac{1}{1-Ce^{-2 i}}\lim_{N\to\infty}\int_{L^2}f(u)\mu_{N}(du)=\frac{1}{1-Ce^{-2 i}}\int_{L^2}f(u)\mu(du),
\end{align*}
that is \eqref{inclusionIneq2}. Also, using the fact that $\mu_N(\Sigma_{N,r}^i)\leq 1,$ we have
\begin{align*}
\int_{L^2}f(u)\mu_N(du) &=\int_{\Sigma_{N,r}^i}f(u)\mu_N(du)+\int_{L^2\backslash \Sigma_{N,r}^i}f(u)\mu_N(du)\\
&\leq \int_{L^2}f(u)\mu_{N,i,r}(du)+\mu_N(L^2\backslash \Sigma_{N,r}^i)\leq \int_{L^2}f(u)\mu_{N,i,r}(du)+Ce^{-2 i}.
\end{align*}
After passing to the limit $N\to\infty$, we obtain the inequality \eqref{inclusionIneq1}.
\end{proof}
Now, we state the well-posedness result.
\begin{prop}\label{PropGWP}
Let $r\leq s-$. For any $u_0\in\Sigma_r\cap \text{Supp}(\mu)$, there is a unique global in time solution to \eqref{NLS7totalequ}. Therefore we obtain a global flow $\phi^t$ defined on $\Sigma_s.$\\
 For any  $T_0>0,$ there is $C(T_0)>0$ such that for any $u,v\in \Sigma_r$
\begin{align}
\sup_{t\in [-T_0,T_0]}\|\phi^t(u)\|_{r} &\leq C(T_0), \label{NL7-estesptime}\\
\sup_{t\in [-T_0,T_0]}\|\phi^t(u)-\phi^t(v)\|_{r} &\leq C(T_0)\|u-v\|_r,\label{continuity}\\
\|\phi^t(u)\|_{s-} &\leq 2\xi(1+i+\ln(1+|t|))\label{ControlModerate}.
\end{align}
\end{prop}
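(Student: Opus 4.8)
\emph{Plan.} The plan is to build $\phi^t$ on $\Sigma_r^i$ as a limit of the Galerkin flows $\phi_{N_k}^t$ applied to an approximating sequence, then to extend it to $\Sigma_r$ by continuity; the driving input is the uniform-in-$N$ bound of Proposition \ref{prop_control} together with the deterministic local theory of Proposition \ref{propLWP}. Fix $i$ and $u_0\in\Sigma_r^i$: by definition there are $N_k\to\infty$ and $u_{N_k}\in\Sigma_{N_k,r}^i$ with $u_{N_k}\to u_0$ in $H^r$. The Galerkin truncations of \eqref{NLS7totalequ} are globally well-posed (conservation of $L^2$, cf. Section \ref{ASection4Invisc}), and Proposition \ref{prop_control} gives, uniformly in $k$,
\[
\|\phi_{N_k}^tu_{N_k}\|_r\le 2\xi(1+i+\ln(1+|t|))=:m_i(t),\qquad t\in\R .
\]
On any interval $[-T_0,T_0]$ this pins the family in the ball $B_R(H^r)$ with $R:=m_i(T_0)$; since $r>s-\tfrac12\ge\tfrac32$, $H^r$ is an algebra, so from the equation $\partial_t(\phi_{N_k}^tu_{N_k})$ is bounded in $H^{r-2}$, whence the family is equicontinuous into $H^{r-2}$ and, by interpolation and the compact embedding $H^r\hookrightarrow H^{r'}$, into $H^{r'}$ for every $r'<r$.

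By Arzel\`a--Ascoli there is a subsequence converging in $C([-T_0,T_0],H^{r'})$ for all $r'<r$ to some $u\in C([-T_0,T_0],H^{r'})\cap L^\infty([-T_0,T_0],H^r)$ with $u(0)=u_0$, and choosing $r'\in(\tfrac32,r)$ one passes to the limit in the Duhamel formula (the nonlinearity converges because $H^{r'}\hookrightarrow L^\infty$ and $P_{N_k}\to\mathrm{Id}$), so $u$ solves \eqref{NLS7totalequ}; moreover $\|u(t)\|_r\le m_i(t)$ by weak lower semicontinuity of $\|\cdot\|_r$. This is the adaptation of Lemma \ref{LemUnifConv} to data converging only in $H^r$. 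To identify $u$ and upgrade its regularity, note that $|u|^{p-1}u\in L^\infty([-T_0,T_0],H^r)$ (algebra), so the Duhamel integral is Lipschitz in time into $H^r$ and, since $S(t)u_0\in C(\R,H^r)$, in fact $u\in C([-T_0,T_0],H^r)$; then Proposition \ref{propLWP} with $r$ in place of $s$, together with the standard continuation argument, shows that any $H^r$-bounded Duhamel solution coincides with its unique local solution, so $u$ is unique and in particular independent of the approximating sequence. Letting $T_0\to\infty$ and using $m_i(t)<\infty$ for all $t$ yields a global solution, hence $\phi^t$ on $\Sigma_r^i$ with $\|\phi^tu_0\|_r\le m_i(t)$; taking $r=s-$ this is \eqref{ControlModerate}, and \eqref{NL7-estesptime} holds with $C(T_0)=m_i(T_0)$.

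It remains to extend $\phi^t$ to $\overline{\Sigma_r^i}$, hence to $\Sigma_r=\bigcup_i\overline{\Sigma_r^i}$, and to prove \eqref{continuity}. For the extension, approximate $u_0\in\overline{\Sigma_r^i}$ in $H^r$ by $v^{(n)}\in\Sigma_r^i$; on $[-T_0,T_0]$ all the flows $\phi^tv^{(n)}$ lie in $B_R(H^r)$ with $R=m_i(T_0)$, so covering $[-T_0,T_0]$ by $O(T_0/T_*)$ subintervals of the uniform local length $T_*:=T(R,r)$ of Proposition \ref{propLWP} and chaining its continuous-dependence estimate gives convergence of $\phi^tv^{(n)}$ in $C([-T_0,T_0],H^r)$ to a limit which (as above) solves \eqref{NLS7totalequ} with data $u_0$; this defines $\phi^tu_0$ globally and the bounds persist, the $H^{s-}$ bound by lower semicontinuity when $r<s-$. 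The restriction to $\text{Supp}(\mu)$ is irrelevant for this part (it enters only later, for invariance). Finally, for \eqref{continuity}, take $u,v\in\overline{\Sigma_r^i}$; both flows stay in $B_R(H^r)$ on $[-T_0,T_0]$, and subtracting their Duhamel formulas and using $\||u|^{p-1}u-|v|^{p-1}v\|_r\lesssim(\|u\|_r^{p-1}+\|v\|_r^{p-1})\|u-v\|_r\le C_R\|u-v\|_r$, Gronwall on each of the $O(T_0/T_*)$ subintervals multiplies $\|\phi^tu-\phi^tv\|_r$ by a bounded factor, which iterates to \eqref{continuity}, with $C(T_0)$ depending on $i$ through $R$.

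\emph{Main obstacle.} The hard part is precisely the first passage to the limit. Contrary to the Gibbs-measure setting, the data $u_{N_k}$ converge to $u_0$ only in $H^r$, with no extra regularity granted by the (poorly understood) measure $\mu$, so one cannot limit directly in $H^r$ and must route through $H^{r'}$, $r'<r$, and then recover $C(\R,H^r)$-regularity and uniqueness from the deterministic local theory. What makes this detour close up on arbitrarily long time intervals is that the local existence time degrades like $\|u_0\|_r^{1-p}$, so a merely polynomial-in-$t$ bound on $\|\phi_{N_k}^tu_{N_k}\|_r$ would be useless; it is the logarithmic bound $m_i(t)$ of Proposition \ref{prop_control} — ultimately a consequence of the assigned dissipation $e^{\rho(\|u\|_{s-})}$ in \eqref{eqN} — that keeps $T_*$ bounded below and lets the finite iteration run on every $[-T_0,T_0]$.
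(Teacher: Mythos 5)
Your proposal is correct, but the key limit passage is run differently from the paper. The paper does not construct the solution by compactness: it first notes that $\text{Supp}(\mu)\subset H^s$ (via \eqref{est-phi-E}), so the local $H^s$ solution $\phi^t u_0$ of Proposition \ref{propLWP} already exists, and then proves on each uniform local window $|t|\leq T\sim R^{1-p}$, $R=\Lambda+1$, $\Lambda=2\xi(1+i+\ln(1+T_0))$, the direct comparison $\|\phi^t(u_0)-\phi_N^t(u_{0,N})\|_r\to 0$ by Gronwall, where the extra $H^s$ regularity of $u_0$ is exactly what controls the tail term $(1-P_N)\bigl(|\phi^\tau u_0|^{p-1}\phi^\tau u_0\bigr)$ through the factor $\lambda_N^{\frac{r-s}{2}}\|\cdot\|_s$; the bound $\|\phi^t u_0\|_r\leq\Lambda$ then follows by the triangle inequality and the scheme is iterated window by window up to $T_0$. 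You instead extract a limit of the Galerkin solutions by Arzel\`a--Ascoli in $C_tH^{r'}$, $r'<r$ (uniform $H^r$ bound from Proposition \ref{prop_control} plus equicontinuity into $H^{r-2}$), pass to the limit in Duhamel, recover the $H^r$ bound by lower semicontinuity, and identify/upgrade via uniqueness of $C_tH^r$ solutions. This buys you something the paper's route does not: you never need $u_0\in H^s$, so the intersection with $\text{Supp}(\mu)$ is indeed not used at this stage (as you observe), whereas it is load-bearing in the paper's comparison estimate; the price is the extra compactness machinery and the regularity-upgrade step. The remaining pieces — extension to $\overline{\Sigma_r^i}$ by approximation and chained continuous dependence, the Gronwall argument for \eqref{continuity} over $O(T_0/T_*)$ subintervals, and \eqref{ControlModerate} inherited from \eqref{estimeePolynomT} — coincide with the paper, including the (shared) fact that the constants depend on $i$ through $R$. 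One cosmetic point: the Duhamel integral $t\mapsto\int_0^tS(t-\tau)F(u(\tau))d\tau$ with $F(u)\in L^\infty_tH^r$ is continuous, not Lipschitz, into $H^r$ (the group factor is only strongly continuous), but continuity is all your upgrade step needs, so this does not affect the argument.
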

\begin{proof}
Let us fix an arbitrary $T_0>0.$ Recall that $\mu(\Sigma_r\cap\text{Supp}(\mu))=1$ (Proposition \ref{PropComparaisonMeasures}). We wish to show that for $u_0\in \Sigma_r\cap\text{Supp}(\mu)$, the solution $\phi^t u_0$ constructed in Proposition $\ref{propLWP}$ exists in fact on $[-T_0,T_0].$ Then assume that $T_0$ is greater than the time of Proposition $\ref{propLWP}$. Remark also that, from the bound \eqref{est-phi-E}, $\text{Supp}(\mu)\subset H^s.$ Then in particular, $u_0\in H^s.$\\
Now, by the construction of $\Sigma_r$, any $u_0$ in $\Sigma_r$ belongs to $\overline{\Sigma_{r}^i}$ for some $i.$ Let us consider the two cases (not necessary disjoint):
\begin{itemize}
\item  $u_0\in \Sigma_{r}^i$
\item $u_0\in \partial \Sigma_{r}^i$
\end{itemize} 

In the first case, there is  a sequence $(u_{0,N})_N$ such that $u_{0,N}\in \Sigma_{N,r}^i$. Using the estimate \eqref{estimeePolynomT}, we have that
\begin{align*}
\|\phi_N^tu_{0,N}\|_{r}\leq 2\xi(1+i+\ln(1+|t|)),\quad t\in\R.
\end{align*}
Therefore we have the bound
\begin{align*}
\|\phi_N^tu_{0,N}\|_{r}\leq 2\xi(1+i+\ln(1+|T_0|)),\quad |t|\leq T_0.
\end{align*}
And, at $t=0$, we see that
\begin{align*}
\|u_{0,N}\|_{r}\leq 2\xi(1+i),
\end{align*}
hence, by passing to the limit $N\to\infty$,
\begin{align}
\|u_{0}\|_{r}\leq 2\xi(1+i).\label{ineq_u_0_2e1i}
\end{align}
Let us remark for $u_0\in \partial\Sigma_{r}^i$, there is a sequence $(u_0^k)_k\in \Sigma_{r}^i$ that converges to $u_0$ in $H^r.$ We see easily that \eqref{ineq_u_0_2e1i} holds also on $\partial\Sigma_{r}^i$ and then on $\overline{\Sigma_{r}^i}.$

Set $\Lambda=2\xi(1+i+\ln(1+|T_0|)),$ and $R=\Lambda+1$. From Proposition \ref{propLWP}, we have a uniform existence time associated to the ball $B_{R}$ is greater than $T\sim R^{1-p}.$ Let $u_0\in \Sigma_{r}^i$ and $|t|\leq T,$ we have that 
\begin{align*}
\phi^t(u_0)-\phi_N^t(u_{N,0}) =S(t)(u_0-u_{N,0}) &-\i\int_0^tS(t-\tau)\left(P_N(|\phi^{\tau}(u_0)|^{p-1}\phi^{\tau}(u_0)-|\phi_N^{\tau}(u_{N,0})|^{p-1}\phi_N^{\tau}(u_{N,0}))\right)d\tau\\
&-\i\int_0^tS(t-\tau)\left((1-P_N)|\phi^{\tau}(u_0)|^{p-1}\phi^{\tau}(u_0)\right)d\tau.
\end{align*}
Therefore, using in particular the fact that $u_0$ belongs to $H^s$ (implying that $\phi^t u_0\in H^s$ for $|t|\leq T$) to treat the last term in the RHS, we have 
\begin{align*}
\|\phi^t(u_0)-\phi_N^t(u_{N,0})\|_r\leq \|u_0-u_{N,0}\|_r+C(r)\int_0^{t}\|\phi^{\tau}(u_0)-\phi_N^{\tau}(u_{N,0})\|_rd\tau+C\lambda_{N}^{\frac{r-s}{2}}.
\end{align*}
Using the Gronwall lemma, and letting $N$ go to $\infty,$ we that
\begin{align*}
\|\phi^t(u_0)-\phi_N^t(u_{N,0})\|_r\to 0,\quad |t|\leq T.
\end{align*}
Now, by the triangle inequality
\begin{align*}
\|\phi^t(u_0)\|_r\leq \|\phi^t(u_0)-\phi_N^t(u_{0,N})\|_r+\|\phi_N^tu_{0,N}\|_r\leq \|\phi^t(u_0)-\phi_N^t(u_{0,N})\|_r+\Lambda, 
\end{align*}
passing to the limit on $N,$ we obtain
\begin{align}
\|\phi^t(u_0)\|_r\leq \Lambda\quad |t|\leq T.\label{localNLSbound}
\end{align}
Then $\phi^T(u_0)$ still belongs to the ball $B_R,$ and we can iterate the procedure. Repeating the argument above, we have 
\begin{align*}
\|\phi^t(u_0)-\phi_N^t(u_{N,0})\|_r\leq \|\phi^T(u_0)-\phi_N^T(u_{N,0})\|_r+C\int_T^{t}\|\phi^{\tau}(u_0)-\phi_N^{\tau}(u_{N,0})\|_rd\tau+C\lambda_{N}^{\frac{r-s}{2}} \quad T\leq |t|\leq 2T.
\end{align*}
Again, we obtain that for $T\leq |t|\leq 2T,$ $\|\phi^{t}(u_0)-\phi_{N}^{t}(u_{0,N})\|_r\to 0$ as $N\to 0$, leading to the estimate $\|\phi^{t}(u_0)\|_r\leq \Lambda,\ \ T\leq |t|\leq 2T$, as above. We see that after the $n'th$ step, $\phi^{nT}(u_0)$ remains in the ball $B_{\Lambda}$, allowing the next iteration. Then we arrive at the claim after iterating a sufficient number of times (recall that $\|\phi_N^t(u_{0,N})\|_r$ remains bounded by $\Lambda$ on $[-T_0,T_0]$.)\\
The bound \eqref{NL7-estesptime} for $u_0\in \Sigma_{r}^i$ follows from the iteration of \eqref{localNLSbound}.\\
Now let $u_0\in\partial\Sigma_{r}^i$, take a $(u_0^k)_k\subset\Sigma_{r}^i$  converging to $u_0$. Recall that the bound \eqref{ineq_u_0_2e1i} holds for both $u_0^k$, for all $k,$ and $u_0$. In particular these elements belong to the ball $B_R$ where $R=\Lambda+1$, the same as above. Denote again the time existence of this ball by $T$. We have by continuity that
\begin{align*}
\lim_{k\to\infty}\|\phi^t(u_0)-\phi^t(u_0^k)\|_r=0.
\end{align*}
Combining this convergence with the triangle inequality, we have
\begin{align*}
\|\phi^t(u_0)\|_r\leq \Lambda\quad \forall t\leq T.
\end{align*}
This allows to iterate the procedure as above. We arrive at global existence for data in $\partial\Sigma_{r}^i$ and completed the globalization on $\overline{\Sigma_{r}^i}.$ Also \eqref{NL7-estesptime} is established on $\overline{\Sigma_{r}^i}.$

Now, using the Duhamel formula, it is not difficult to see that
\begin{align*}
\|\phi^tu-\phi^tv\|_r &=\|u-v\|_r +C\int_0^t\left(\|\phi^{\tau}v\|_{L^\infty}^{p-1}+\|\phi^{\tau}u\|_{L^\infty}^{p-1}\right)\|\phi^\tau u-\phi^\tau v\|_rd\tau\\
&\leq \|u-v\|_r +2C\Lambda^{p-1}\int_0^t\|\phi^\tau u-\phi^\tau v\|_rd\tau.
\end{align*}
We use the Gronwall lemma and take the sup over $[-T_0,T_0]$ to obtain \eqref{continuity}. The inequality \eqref{ControlModerate} follows from \eqref{estimeePolynomT}.
\end{proof}
\begin{rmq}
From the proof above, we have that for any $i\geq 1,$ any $u_0\in \Sigma_r^i,$ any $t\in \R$,
\begin{equation}\label{Convergence_phit-phitN}
\lim_{N\to\infty}\|\phi^tu_0-\phi_N^{t}u_{0,N}\|_r=0,
\end{equation}
where $(u_{0,N})$ is a sequence in $\Sigma_{N,r}^i$ that converges to $u_0$ in $H^r.$
\end{rmq}

Consider an increasing sequence  $l=(l_n)_{n\in\N}$ such that $l_0=s-\frac{1}{2}$ and $\lim_{n\to\infty}l_n=s-.$ Set
\begin{align*}
\Sigma=\bigcap_{r\in l}\Sigma_r.
\end{align*} 

We have the following result.
\begin{prop}
The set $\Sigma$ is of full $\mu-$measure. Moreover, the flow $\phi^t$ constructed in Proposition \ref{PropGWP} satisfies $\phi^t\Sigma=\Sigma,$ for any $t\in\R.$
\end{prop}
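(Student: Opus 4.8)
The plan is to establish the two assertions — full $\mu$-measure of $\Sigma$ and invariance $\phi^t\Sigma=\Sigma$ — separately, the first being essentially a countable-intersection argument and the second requiring an approximation/limit argument built on Propositions \ref{prop_control}, \ref{Propexists1} and \ref{PropGWP}.

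\textbf{Step 1: $\mu(\Sigma)=1$.} By Proposition \ref{PropComparaisonMeasures}, for each $r\in l$ we have $\mu(\Sigma_r)=1$. Since $l=(l_n)_{n\in\N}$ is a countable set, $\Sigma=\bigcap_{r\in l}\Sigma_r$ is a countable intersection of full-measure sets, hence $\mu(\Sigma)=1$. (One must make sure that all the $\Sigma_r$ with $r\in l$ are measurable, which follows since each $\overline{\Sigma_r^i}$ is closed in $H^r$ — and we intersect at the level of the ambient space, noting the embeddings \eqref{IneqEmbedding} let us view everything inside, say, $H^{l_0}$.)

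\textbf{Step 2: $\phi^t\Sigma\subset\Sigma$ for every $t\in\R$.} Fix $t\in\R$ and $u_0\in\Sigma$; then $u_0\in\Sigma_r$ for each $r\in l$, and $u_0\in\text{Supp}(\mu)$ may be assumed on a full-measure subset (but in fact the argument is purely set-theoretic and needs only the structure of the $\Sigma_r$). It suffices to show $\phi^tu_0\in\Sigma_{r'}$ for every $r'\in l$. Given $r'\in l$, pick $r\in l$ with $r'<r$ (possible since $l$ increases to $s-$, so no $r'\in l$ is maximal — here I use that $l$ has no largest element). Since $u_0\in\Sigma_r$, it lies in $\overline{\Sigma_r^i}$ for some $i$. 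First treat $u_0\in\Sigma_r^i$: by the construction of $\Sigma_r^i$ there is a sequence $u_{0,N}\in\Sigma_{N,r}^i$ with $u_{0,N}\to u_0$ in $H^r$. By Proposition \ref{Propexists1} (applied with the pair $r>r_1:=r'$, and with the fixed $t$), there is $i_1=i_1(t)$ such that $\phi_N^t(u_{0,N})\in\Sigma_{N,r'}^{i+i_1}$ for all $N$. By the remark following Proposition \ref{PropGWP} (equation \eqref{Convergence_phit-phitN}), $\phi_N^t(u_{0,N})\to\phi^t u_0$ in $H^r$, hence also in $H^{r'}$ by \eqref{IneqEmbedding}. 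Therefore $\phi^t u_0$ is an $H^{r'}$-limit of a sequence with $\phi_N^t(u_{0,N})\in\Sigma_{N,r'}^{i+i_1}$, which is exactly the defining property of $\Sigma_{r'}^{i+i_1}$; so $\phi^t u_0\in\Sigma_{r'}^{i+i_1}\subset\overline{\Sigma_{r'}^{i+i_1}}\subset\Sigma_{r'}$. For the boundary case $u_0\in\partial\Sigma_r^i$, take $u_0^k\in\Sigma_r^i$ with $u_0^k\to u_0$ in $H^r$; by the previous case $\phi^t u_0^k\in\Sigma_{r'}^{i+i_1}$ with the \emph{same} $i_1(t)$, and by the continuity bound \eqref{continuity} of Proposition \ref{PropGWP} (all $u_0^k$ and $u_0$ lying in a common ball by \eqref{ineq_u_0_2e1i}) we get $\phi^t u_0^k\to\phi^t u_0$ in $H^{r'}$, hence $\phi^t u_0\in\overline{\Sigma_{r'}^{i+i_1}}\subset\Sigma_{r'}$. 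Since $r'\in l$ was arbitrary, $\phi^t u_0\in\Sigma$.

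\textbf{Step 3: equality.} Applying Step 2 with $-t$ in place of $t$ gives $\phi^{-t}\Sigma\subset\Sigma$, i.e. $\Sigma\subset\phi^t\Sigma$; combined with $\phi^t\Sigma\subset\Sigma$ this yields $\phi^t\Sigma=\Sigma$. (Here $\phi^{-t}$ is the inverse of $\phi^t$ as guaranteed by the global well-posedness in Proposition \ref{PropGWP}.) The main obstacle is Step 2: one must check that the index shift $i_1$ furnished by Proposition \ref{Propexists1} depends only on $t$ and not on $N$ or on the particular point, so that a \emph{single} set $\Sigma_{N,r'}^{i+i_1}$ captures the whole image sequence, and that the convergence \eqref{Convergence_phit-phitN} — which was proved for approximating sequences inside $\Sigma_{N,r}^i$ — is exactly what is needed to land the limit in $\Sigma_{r'}$; the boundary case additionally relies on the uniform ball membership \eqref{ineq_u_0_2e1i} so that the Lipschitz estimate \eqref{continuity} applies with a constant uniform in $k$.
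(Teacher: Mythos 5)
Your proposal is correct and follows essentially the same route as the paper: countable intersection of full-measure sets for the first claim, then the approximation argument combining Proposition \ref{Propexists1} with the convergence \eqref{Convergence_phit-phitN} for interior points, continuity \eqref{continuity} for boundary points, and the time-reversal $t\mapsto -t$ for the reverse inclusion. Your version is in fact slightly more careful than the paper's in making explicit that for each target index $r'\in l$ one must choose a strictly larger $r\in l$ (possible since $l$ has no maximal element) so that Proposition \ref{Propexists1} applies with $r_1=r'<r$.
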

\begin{proof}
Since any $\Sigma_r$ is of full $\mu-$measure and the intersection is countable, we obtain the first statement.\\
To prove the second statement, let us take $u_0\in \Sigma$, then $u_0$ belong to each $\Sigma_r$, $r\in l.$ \\
First, consider $u_0\in \Sigma^i_r$. Therefore $u_0$ is the limit of a sequence $(u_{0,N})$ such that $u_{0,N}\in \Sigma_{N,r}^i$ for every $N$. Now from the Proposition \ref{Propexists1}, there is $i_1:=i_1(t)$ such that $\phi_N^t(u_{0,N})\in \Sigma_{N,r_1}^{i+i_1}$.
Using the convergence \eqref{Convergence_phit-phitN}, we see that $\phi^t(u_0)\in \Sigma_{r_1}^{i+i_1}$. Now if $u_0\in \partial\Sigma_r^i$, there is $(u_0^k)_k\subset\Sigma^i_r$ that converges to $u_0$ in $H^r$.  Since we showed that $\phi^t\Sigma^i_r\subset\Sigma^{i+i_1}_{r_1}$ and $\phi^t(\cdot) $ is continuous, we see that $\phi^t(u_0)=\lim_{k}\phi^t(u_0^k)\in \overline{\Sigma^{i+i_1}_{r_1}}$. We conclude that $\phi^t\overline{\Sigma^i_r}\subset\overline{\Sigma_{r_1}^{i+i_1}}\subset \Sigma_{r_1}$. It follows that $\phi^t\Sigma\subset\Sigma.$

 Now, let $u$ be in $\Sigma$, since $\phi^t$ is well-defined on $\Sigma$ we can set  $u_0=\phi^{-t}u$ , we then have $u=\phi^tu_0$ and hence $\Sigma\subset\phi^{t}\Sigma.$ That finishes the proof.
\end{proof}
\section{Invariance of the measure}
\label{ASection6InvarMeas}
\begin{thm}
The measure $\mu$ is invariant under $\phi^t.$
\end{thm}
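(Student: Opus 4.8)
The plan is to lift to the infinite-dimensional level the approximation scheme used for $\mu_N$ in Section~\ref{ASection4Invisc}. Fix $r\le s-$ and the subsequence along which $\mu_N\to\mu$ weakly on $H^r$ (from \eqref{Conv_mes_muNtomu}); by a diagonal extraction we may assume that along the same subsequence $\mu_{N,i,r}\to\mu_{i,r}$ weakly on $H^r$ for every $i\ge1$. Since $\mu_N$ is invariant under $\phi_N^t$ (Section~\ref{ASection4Invisc}), for $f\in C_b(H^r)$ with $\|f\|_\infty\le1$ we have $\int f(\phi_N^tu)\,\mu_N(du)=\int f(u)\,\mu_N(du)\to\int f(u)\,\mu(du)$. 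Hence it is enough to prove that, for every such $f$ and every $t\in\R$,
$$\int_{H^r}f(\phi_N^tu)\,\mu_N(du)\ \longrightarrow\ \int_{H^r}f(\phi^tu)\,\mu(du)\qquad(N\to\infty);$$
indeed this yields $\int f\circ\phi^t\,d\mu=\int f\,d\mu$ for all $f\in C_b(H^r)$, that is $(\phi^t)_*\mu=\mu$ as Borel measures on $H^r$, which (as $\mu$ is carried by $H^s$) is the asserted invariance. The case $t<0$ then follows from $\phi^{-t}=(\phi^t)^{-1}$ exactly as in Section~\ref{ASection4Invisc}. (Here $f\circ\phi^t$ is defined $\mu$-a.e. because $\mu(\Sigma_r\cap\text{Supp}(\mu))=1$ and $\phi^t$ is a flow on $\Sigma_r\cap\text{Supp}(\mu)$ by Proposition~\ref{PropGWP}.)

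To prove the displayed limit I interpolate through the restriction measures, using three estimates. First, applying \eqref{inclusionIneq1}--\eqref{inclusionIneq2} to $g\in C_b(H^r)$ with $0\le g\le1$ and extending to Borel sets by outer regularity, one gets $\mu_{i,r}\ll\mu$, $\mu_{i,r}(\Gamma)\le(1-Ce^{-2i})^{-1}\mu(\Gamma)$ and $\mu(\Gamma)\le\mu_{i,r}(\Gamma)+Ce^{-2i}$ for all Borel $\Gamma$, hence $\|\mu-\mu_{i,r}\|_{\mathrm{TV}}\le C'e^{-2i}$; in particular $|\mu(f\circ\phi^t)-\mu_{i,r}(f\circ\phi^t)|\le C'e^{-2i}$, with $f\circ\phi^t$ being $\mu_{i,r}$-a.e. defined thanks to the absolute continuity just obtained. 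Second, from $\mu_{N,i,r}(\Gamma)=\mu_N(\Gamma\cap\Sigma_{N,r}^i)/\mu_N(\Sigma_{N,r}^i)$ and $\mu_N(E_N\setminus\Sigma_{N,r}^i)\le Ce^{-2i}$ (see \eqref{ConstrSigma}), the splitting $\mu_N(g)=\mu_N(\Sigma_{N,r}^i)\,\mu_{N,i,r}(g)+\int_{E_N\setminus\Sigma_{N,r}^i}g\,d\mu_N$ gives $|\mu_N(g)-\mu_{N,i,r}(g)|\le2Ce^{-2i}$ whenever $\|g\|_\infty\le1$; I apply this to $g=f\circ\phi_N^t$.

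The core step is the remaining convergence: for each fixed $i$, $\mu_{N,i,r}(f\circ\phi_N^t)\to\mu_{i,r}(f\circ\phi^t)$ as $N\to\infty$. This is where the Skorokhod representation preceding Proposition~\ref{PropComparaisonMeasures} enters: on the probability space it produces, carrying $u_{N,i,r}\sim\mu_{N,i,r}$ and $u_{i,r}\sim\mu_{i,r}$ with $u_{N,i,r}\to u_{i,r}$ almost surely in $H^r$, one has a.s. $u_{i,r}\in\Sigma_r^i\cap\text{Supp}(\mu)$ (by $\mu_{i,r}(\Sigma_r^i)=1$ and $\mu_{i,r}\ll\mu$) and $u_{N,i,r}\in\Sigma_{N,r}^i$, so the deterministic convergence \eqref{Convergence_phit-phitN} applies pathwise and gives $\phi_N^tu_{N,i,r}\to\phi^tu_{i,r}$ a.s. in $H^r$; since $f$ is continuous and $\|f\|_\infty\le1$, dominated convergence yields $\E f(\phi_N^tu_{N,i,r})\to\E f(\phi^tu_{i,r})$, i.e. the claim. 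Combining the three estimates gives $\limsup_{N\to\infty}\big|\int f(\phi_N^tu)\,\mu_N(du)-\int f(\phi^tu)\,\mu(du)\big|\le C''e^{-2i}$ for every $i\ge1$, and letting $i\to\infty$ closes the argument.

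I expect the Skorokhod step to be the main obstacle, specifically verifying that the Skorokhod variables $u_{i,r}$ land almost surely in a set on which $\phi^t$ is defined and on which the deterministic approximation \eqref{Convergence_phit-phitN} is valid, and arranging all the weak limits above along one common subsequence. The rest is bookkeeping with the total-variation bound, the conditional-measure identities, and the weak convergences already in hand.
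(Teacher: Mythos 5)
Your argument is correct, but it follows a genuinely different route from the paper's. The paper first reduces by Ulam's theorem to compact sets, then by the group property to a single short time interval $[-\tau,\tau]$, with $\tau$ the uniform local existence time on a ball of $H^s$, and there closes the diagram $\Phi_N^{t*}\mu_N=\mu_N\to\mu$ by testing against Lipschitz functions supported on $H^s$-balls and invoking the deterministic, ball-uniform convergence of Lemma \ref{LemUnifConv}; invariance for general $t$ is then obtained by iterating in time. You instead work at all times at once and interpolate through the restriction measures: the total-variation bounds $\|\mu-\mu_{i,r}\|_{\mathrm{TV}}\lesssim e^{-2i}$ and $|\mu_N(g)-\mu_{N,i,r}(g)|\lesssim e^{-2i}$ (both legitimate consequences of \eqref{inclusionIneq1}--\eqref{inclusionIneq2} extended to Borel sets by regularity, and of \eqref{ConstrSigma}), together with the Skorokhod coupling and the pointwise convergence \eqref{Convergence_phit-phitN} on $\Sigma_r^i$, which plays the role that Lemma \ref{LemUnifConv} plays in the paper. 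What your route buys is that no small-time reduction, no iteration and no Ulam regularity argument are needed, and the convergence $\phi_N^t u_{N,i,r}\to\phi^t u_{i,r}$ is only required pathwise on the statistical ensemble rather than uniformly on $H^s$-balls; what it costs is the bookkeeping of a common diagonal subsequence for all the weak limits and the verification --- which you correctly supply --- that the Skorokhod representatives land almost surely in $\Sigma_r^i\cap\mathrm{Supp}(\mu)$, since the proof of \eqref{Convergence_phit-phitN} in Proposition \ref{PropGWP} genuinely uses $u_0\in H^s$ to control the tail term $(1-P_N)(|\phi^\tau u_0|^{p-1}\phi^\tau u_0)$. Both arguments ultimately rest on the invariance of $\mu_N$ under $\phi_N^t$ and on some mode of convergence of $\phi_N^t$ to $\phi^t$; yours simply exploits the Section \ref{ASection5StatensemGWP} machinery more heavily, and in exchange is cleaner about where $f\circ\phi^t$ is actually defined.
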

\begin{proof}
The measure $\mu$ is a Borel probability defined on a Polish space.  The Ulam's theorem (see Theorem $7.1.4$ in \cite{dudley}) states that such a measure is regular: for any $S\in \text{Bor}(H^s)$
\begin{align*}
\mu(S)=\sup\{\mu(K),\ K\subset S\ compact\}.
\end{align*}
Therefore it suffices to prove invariance for compact sets. Indeed, we then obtain, for any $t$,
\begin{align}
\mu(\phi^{-t}S)&=\sup\{\mu(K),\ K\subset \phi^{-t}S\ compact\}=\sup\{\mu(\phi^{t}K),\ K\subset \phi^{-t}S\ compact\}\\
&=\sup\{\mu(\phi^{t}K),\ \phi^{t}K\subset S,\ K\  compact\}\leq \sup\{\mu(C),\ C\subset S\ compact\} =\mu(S),
\end{align}
where we used the fact that $\phi^t$ is continuous in space, therefore it transforms compact sets into compact sets.\\
Using the inequality above, we also have for any $t$ that
\begin{align*}
\mu(S)=\mu(\phi^{-t}\phi^t S)\leq \mu(\phi^tS).
\end{align*}
since $t$ is arbitrary, we then obtain the invariance.\\
Now we claim that it also suffices to show the invariance only on a fixed interval $[-\tau,\tau],$ where $\tau>0$ can be as small as we want. Indeed for $\tau\leq t\leq 2\tau$, one has $\mu(\phi^{-t}K)=\mu(\phi^{-\tau}\phi^{t-\tau}K)=\mu(\phi^{t-\tau}K)=\mu(K)$ (using that $0\leq t-\tau\leq\tau$), and for greater values of $t$ we can iterate. A same argument works for negative values of $t.$\\
Our proof is then reduced to showing invariance for compact sets on a small time interval. Therefore, it suffices to show it on the balls of $H^s.$
Here is the idea of the proof:
$$
\hspace{10mm}
\xymatrix{
  \Phi_{N}^{*t}\mu_{k} \ar@{=}[r]^{(I)} \ar[d]^{(III)} & \mu_{k} \ar[d]^{(II)} \\
    \Phi^{*t}\mu \ar@{=}[r]^{(IV)} & \mu
  }
$$
The equality $(I)$ is the invariance of $\mu_N$ under $\Phi_{N}^t$, and $(II)$ is the weak convergence $\mu_N\to\mu$. Then $(IV)$ is proved once $(III)$ is verified.

Let $f \in C_b(H^s)$,  supported on a ball $B_R(H^s).$ Assume that $f$ is  Lipschitz in the topology of $H^r,\ r<s$. Let $\tau$ be the associated time existence provided by Proposition \ref{propLWP}. Then for $t<\tau,$ we have
\begin{align*}
(\Phi^{t*}_{N}\mu_N,f)-(\Phi^{*t}\mu,f)&=(\mu_N,\Phi_{N}^tf)-(\mu,\Phi^tf)\\
&=(\mu_N,\Phi_{N}^tf-\Phi^tf)-(\mu-\mu_N,\Phi^tf)\\
&=A-B.
\end{align*}
By the continuity property of $\phi^t$, we have that $\Phi^tf\in C_b(H^s).$ Then by weak convergence of $\mu_N$ to $\mu$ on $H^r$, we have that $B\to 0$ as $N\to \infty$.\\

Now using the Lipschitz property of $f$, we have, with the use of Lemma \ref{LemUnifConv},
\begin{align*}
|A|\leq C_f\sup_{u\in B_R(H^s)}\|\phi_N^t(u)-\phi^t(u)\|_r\mu_N(B_R(H^r))\leq C_f\sup_{u\in B_R(H^s)}\|\phi_N^t(u)-\phi^t(u)\|_r\to 0,\ \ as\ N\to\infty.
\end{align*}
We obtain the claim.
\end{proof}
\section{Almost sure GWP on $H^s(\T^3)$ and remark on the size of the data}
\label{ASection7GWPH2Size}
We have shown the global well-posedness on the support of $\mu$ viewed as a subset of $\cap_{\sigma<s-}H^\sigma$ (Proposition \ref{PropGWP}). But the estimate \eqref{est-phi-E} (in particular the control on $\|u\|_s$) shows us  that $\mu$ is in fact concentrated on $H^s.$  As a consequence, we give here the argument that the global well-posedness holds with respect to the topology of $H^s.$ This fact relies on the propagation of regularity principle, very well known in the context of dispersive equations. Afterwards, we give an argument showing that large data are concerned by our result.\\ 
From Subsection \ref{subsectglobalizationcondition}, we have the statement that if the quantity $\int_0^t\|\phi^tu_0\|_{L^\infty}^{p-1}d\tau$ remains finite for all times, then the solution issued from $u_0\in H^s$ is global in $H^r.$ Now let $u_0$ belong to the support of $\mu,$ thanks to Proposition \ref{PropGWP}, the solution of \eqref{Equ_NLS7} issued to $u_0$ is global and belongs to $C_tH^r$ for any $r\in (s-\frac{1}{2},s)$, in particular the quantity $\int_0^t\|\phi^tu_0\|_{L^\infty}^{p-1}d\tau$ remains finite for all $t.$ By this way, we see that the local solutions on $H^s$ stated in Proposition \ref{propLWP} are global on the support on $\mu$ viewed as a subset of $H^s.$ The invoked control allows also uniqueness and continuity with respect to the initial datum by following usual estimation procedures.

Now let us turn our attention to the size of the data. We remark that the ensemble constructed in this work does not concern only small data. In fact, by an \textit{scaling of the measure}, we have that for any $\Lambda>0,$ there  is a non-degenerate measure $\mu^\Lambda$ concentrated on $H^s$ such that
\begin{align}
\E_{\mu^\Lambda}\mathcal{M}(u)=\Lambda,\label{scaling estimate}
\end{align}
and we have global wellposedness on the support of $\mu^{\Lambda}.$ 
 To see the construction of such a measure, it suffices to change the numbers $(a_m)$ entering the definition of the noise in \eqref{formula_BM} into $(\frac{a_m\sqrt{\Lambda}}{\sqrt{A_0}})$. Therefore, the number $A_0$ is changed into $\Lambda,$ the numbers $A_{0,N}$ into $\Lambda_N:=\frac{A_{0,N}}{A_0}\Lambda$ that converge clearly to $\Lambda.$ Also, all the analysis done here remains unchanged (because the scaling in $\alpha$ between the fluctuation and the dissipation in \eqref{eqN} is not affected: we still keep $\alpha$ as the size of the dissipation for a fluctuation of intensity $\sqrt{\al}$). Therefore the following statement is a consequence of the results that have been establish so far:
 \begin{thm}\label{THMCUM}
 Let $\Lambda >0$, there is a measure $\mu^\Lambda$ concentrated on $H^s$ and having the following properties
 \begin{enumerate}
 \item The  NLS equation \eqref{Equ_NLS7} is globally well-posed on the support of $\mu^\Lambda;$
 \item The identity \eqref{scaling estimate} holds true;
 \item The measure $\mu^\Lambda$ is invariant under the flow $\phi^t$ of \eqref{Equ_NLS7} defined on its support $S^\Lambda$.
 \end{enumerate}
 \end{thm}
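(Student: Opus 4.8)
The plan is to obtain $\mu^\Lambda$ by re-running verbatim the entire construction of Sections \ref{ASection3flucdiss}--\ref{ASection7GWPH2Size}, the only modification being a rescaling of the noise. Concretely, I would replace the coefficients $(a_m)$ in \eqref{formula_BM} by $\left(a_m\sqrt{\Lambda}/\sqrt{A_0}\right)$ (or by $\left(a_m\sqrt{2\Lambda}/\sqrt{A_0}\right)$, according to whether one wants the constant in \eqref{scaling estimate} to come out $\Lambda$ or $2\Lambda$ after the bookkeeping of \eqref{est-phi-M}). Since $A_s=\sum_m\lambda_m^s|a_m|^2$ is only assumed finite for $s=1$ and this substitution multiplies every $A_s$ by the fixed positive factor $\Lambda/A_0$, the standing hypothesis $A_1<\infty$ is preserved, the zeroth constant of \eqref{ConstantAs} becomes exactly $\Lambda$, and correspondingly $A_{0,N}$ becomes $\Lambda_N:=\frac{A_{0,N}}{A_0}\Lambda$, which still converges to $\Lambda$.

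The key observation is that the whole argument depends on the driving noise only through the constants $A_{s,N},A_s$ and, crucially, through the balance between the forcing intensity $\sqrt{\alpha}$ and the dissipation strength $\alpha$ in \eqref{eqN}; this balance is untouched by rescaling the $a_m$. Consequently, Theorem \ref{thmStatMes} and the propositions following it still produce stationary measures with bounds uniform in $(\alpha,N)$, now with $\Lambda$ and $\Lambda_N$ in place of $A_0$ and $A_{0,N}$; the inviscid limit of Section \ref{ASection4Invisc} yields $\mu_N^\Lambda$ invariant under $\phi_N^t$ with the analogues of \eqref{est_muN_M}--\eqref{est_muN_MR}; the choice $\rho=3\xi^{-1}$ still satisfies \eqref{Hyprho}, which is a property of $\rho$ alone, so \eqref{est_muN_e}--\eqref{ReE} persist; the tightness and Skorokhod machinery of Section \ref{ASection5StatensemGWP} produce $\mu^\Lambda$ on $L^2$ together with the restriction measures and the sets $\Sigma_{N,r}^i,\Sigma_r^i,\Sigma_r,\Sigma$, and Propositions \ref{prop_control}, \ref{Propexists1}, \ref{PropComparaisonMeasures} and \ref{PropGWP} carry over word for word. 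This gives items (1) and (3): global well-posedness on $\text{Supp}(\mu^\Lambda)$ (first on $\cap_{\sigma<s-}H^\sigma$, then on $H^s$ by the propagation-of-regularity argument of Section \ref{ASection7GWPH2Size}, using that the analogue of \eqref{est-phi-E} still forces $\text{Supp}(\mu^\Lambda)\subset H^s$), and invariance of $\mu^\Lambda$ under $\phi^t$ by the compact-exhaustion argument of Section \ref{ASection6InvarMeas}.

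Item (2) is then immediate from the rescaled analogue of \eqref{est-phi-M}, which reads $\int_{L^2}\mathcal{M}(u)\,\mu^\Lambda(du)=\Lambda$ with the normalization fixed above; this is exactly \eqref{scaling estimate}. Non-degeneracy is automatic: since $\mathcal{M}(0)=0$ while $\int_{L^2}\mathcal{M}(u)\,\mu^\Lambda(du)=\Lambda>0$, the measure $\mu^\Lambda$ cannot be $\delta_0$, and measures attached to distinct values of $\Lambda$ are genuinely different.

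I do not anticipate a real analytic obstacle; the substance of the proof is the bookkeeping of checking that no estimate in Sections \ref{ASection3flucdiss}--\ref{ASection7GWPH2Size} secretly used the numerical value of $A_0$. The one point deserving care is that the constants written $C$ and declared ``independent of $\alpha$ and $N$'' in \eqref{est_StatMeas_mathcalE}, \eqref{est_muN_E}, \eqref{ReE} and \eqref{est-phi-E} now acquire a dependence on $\Lambda$; this is harmless because $\Lambda$ is frozen throughout, and what the inviscid and infinite-dimensional limits require is precisely the absence of $\alpha$- and $N$-dependence, which still holds. Everything else --- the Weyl asymptotics, the algebra property of $H^s$, the Duhamel and Gr\"onwall steps, and the It\^o-formula computations of the dissipation rates $\mathcal{M}$ and $\mathcal{E}$ --- is manifestly insensitive to the rescaling.
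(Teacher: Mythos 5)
Your proposal is correct and is essentially the paper's own argument: the measure $\mu^\Lambda$ is obtained by replacing $(a_m)$ with $\bigl(a_m\sqrt{\Lambda}/\sqrt{A_0}\bigr)$ and observing that the entire construction of Sections \ref{ASection3flucdiss}--\ref{ASection7GWPH2Size} goes through unchanged because the $\sqrt{\alpha}$-fluctuation/$\alpha$-dissipation balance in \eqref{eqN} is unaffected, with $A_0$, $A_{0,N}$ replaced by $\Lambda$, $\Lambda_N$. Your remark about the choice between $\sqrt{\Lambda}/\sqrt{A_0}$ and $\sqrt{2\Lambda}/\sqrt{A_0}$ is a fair catch of a factor-of-two bookkeeping point (\eqref{est-phi-M} carries the factor $\tfrac12$) that the paper glosses over.
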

Recall that $\mathcal{M}(u)=\|u\|_{s-1}^2+e^{\xi(\|u\|_{s-})}\|u\|^2\leq \left(1+e^{\xi(\|u\|_{s-})}\right)\|u\|_{s-}^2$. Therefore, the estimate \eqref{scaling estimate} provides data on the support  $S^\Lambda$of $\mu^\Lambda$ whose $H^{s-}-$sizes are larger than $C(\Lambda),$ where $C(\Lambda)\to\infty$ as $\Lambda\to\infty$. We see from \eqref{scaling estimate} that the set of such data is of positive $\mu^\Lambda-$measure.\\
Furthermore, we can define a cumulative probability measure
\begin{align*}
\mu^*=\sum_{n=1}^{\infty}\frac{\mu^{n}}{2^n},
\end{align*}
where we have taken $\Lambda=n$,  $n\in\N^*.$ The support of $\mu^*$ is the set
$$S^*=\bigcup_{n\in\N^*}S^n.$$
It follows from Theorem \ref{THMCUM} that a global flow for \eqref{Equ_NLS7} that we write again $\phi^t$ is defined  on $S^*.$ \\
Since for any $n,$ $\E_{\mu^n}\mathcal{M}(u)=n$, we have that for any $n,$ there is a set of positive $\mu_n-$measure containing initial data whose sizes are bigger than $C(n)$, where $C(n)$ goes to infinity with $n$. Hence, we obtain the following statement:
\begin{align*}
\forall n>0,\ there\ is \ a\ set\ W_n\ such\ that\ \mu^*(W_n)>0,\ and \ any \ u_0\in W_n\ satisfies\ \|u_0\|_s\geq n.
\end{align*}
 Moreover since $\phi^t_*\mu^n=\mu^n$ for any $n$, we see that $\mu^*$ is invariant under the flow $\phi^t$.  This finishes the discussion of this section.
\section{Density for the distributions of the conservation laws.}
\label{ASection8Qualprop}
Let $\mu_{\al,N}$ be an stationary measure of \eqref{eqN} and $\mu$ the invariant measure for \eqref{Equ_NLS7} that has been constructed in the previous sections. The quantity $E_*\mu=\mu(E\in \cdot)$ is the law of $E(u)$, where $u$ is distributed as $\mu$. The similar notation is used for $M$ and for the measures $\mu_{\al,N}.$ 
\begin{thm}\label{thm_density}
Suppose  $a_m$ is non-zero for any $m\geq 0.$
Then, the measures  $E_*\mu$ and $M_*\mu$  are absolutely continuous with respect to the Lebesgue measure on $\R.$
\end{thm}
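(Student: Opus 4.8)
\emph{Proof plan.} The plan is to establish the statement first for the Galerkin stationary measures $\mu_{\al,N}$, in a quantitative form, and then to transport it through the two weak limits $\mu_{\al,N}\rightharpoonup\mu_N$ and $\mu_N\rightharpoonup\mu$. The engine is an It\^o--stationarity identity. Fix $g\in C_b(\R)$, $g\ge0$, and set $G(x)=\int_0^x\!\int_0^y g$, so that $0\le G'\le\|g\|_{L^1(\R)}$ and $G''=g$. Applying the finite-dimensional It\^o formula to $G(M(u_t))$ along the stationary solution of \eqref{eqN}, using $M'(u,\i(\Delta u-|u|^{p-1}u))=0$, the expression \eqref{DefmathcalM} for the dissipation rate, the fact that the stochastic integral is a mean-zero martingale, and stationarity (so $\tfrac{d}{dt}\E\,G(M(u_t))=0$), one obtains
\begin{align*}
\int_{E_N}g(M(u))\,\sigma_N(u)\,\mu_{\al,N}(du)=2\int_{E_N}G'(M(u))\,\mathcal M(u)\,\mu_{\al,N}(du)\le A_0\,\|g\|_{L^1(\R)},
\end{align*}
where $\sigma_N(u):=\sum_{|m|\le N}a_m^2(u,e_m)^2$ is the density of the quadratic variation of $M(u_t)$ and the last bound uses $G'\le\|g\|_{L^1}$ together with \eqref{est_StatMeas_mathcalM}. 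Running the same computation on $G(E(u_t))$, using $E'(u,\i(\Delta u-|u|^{p-1}u))=0$, \eqref{DefmathcalE}, \eqref{est_StatMeas_mathcalE} and the uniform energy estimates of Theorem \ref{thmStatMes} to absorb the extra second-order It\^o contribution, gives $\int_{E_N}g(E(u))\,\sigma^E_N(u)\,\mu_{\al,N}(du)\le C\|g\|_{L^1(\R)}$ with $\sigma^E_N(u)=\sum_{|m|\le N}a_m^2\bigl((1+\lambda_m)(u,e_m)+(|u|^{p-1}u,e_m)\bigr)^2$ and $C$ independent of $(\al,N)$.

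Next I would pass to the limit in these inequalities. The crucial point is that $\sigma_N$ and $\sigma^E_N$ are uniformly integrable against the $\mu_{\al,N}$: by \eqref{DefmathcalM} and \eqref{Hyprho} one has $\mathcal M(u)\ge e^{\rho(\|u\|_{s-})}\|u\|^2\gtrsim_q\|u\|^{q}$ for every $q$, so \eqref{est_StatMeas_mathcalM} yields $\int\|u\|^{q}\,\mu_{\al,N}(du)\le C_q$ uniformly in $(\al,N)$, and together with the analogous control of $\|u\|_{L^\infty}$-moments (via \eqref{DefmathcalE}, \eqref{est_StatMeas_mathcalE}) this dominates $\sigma_N$ and the truncations $\sigma^E_K$. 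Since $M$ and $E$ are continuous on the spaces carrying $\mu_{\al,N},\mu_N,\mu$, since $\sigma_K\le\sigma_N$ and $\sigma^E_K\le\sigma^E_N$ for $N\ge K$ (monotonicity in the index set), and since $\mu_{\al,N}\rightharpoonup\mu_N$ then $\mu_N\rightharpoonup\mu$, letting first $\al\to0$, then $N\to\infty$, then $K\to\infty$ (monotone convergence) gives, for all $g\in C_b(\R)$, $g\ge0$,
\begin{align*}
\int g(M(u))\,\sigma_\infty(u)\,\mu(du)\le A_0\|g\|_{L^1(\R)},\qquad \int g(E(u))\,\sigma^E_\infty(u)\,\mu(du)\le C\|g\|_{L^1(\R)},
\end{align*}
with $\sigma_\infty(u)=\sum_m a_m^2(u,e_m)^2$ and $\sigma^E_\infty(u)=\sum_m a_m^2((1+\lambda_m)(u,e_m)+(|u|^{p-1}u,e_m))^2$, both finite $\mu$-a.e.\ since $\mu$ is concentrated on $H^s$ by \eqref{est-phi-E}.

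Now fix a Lebesgue-null set $A\subset\R$ and take $g_n\in C_b(\R)$, $0\le g_n$, with $g_n\ge\indFn{A}$ and $\|g_n\|_{L^1}\to0$ (bounded continuous approximations of indicators of shrinking open neighbourhoods of $A$). The first inequality above gives $\int\indFn{A}(M(u))\,\sigma_\infty(u)\,\mu(du)\le\lim_n A_0\|g_n\|_{L^1}=0$, hence $\sigma_\infty(u)=0$ for $\mu$-a.e.\ $u$ with $M(u)\in A$. Since every $a_m\neq0$ and $(e_m)$ is a basis, $\sigma_\infty(u)=0$ forces $u=0$, so $\mu(M^{-1}(A))\le\mu(\{0\})$. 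The same argument with $\sigma^E_\infty$ gives $\mu(E^{-1}(A))\le\mu(\{0\})$: here $\sigma^E_\infty(u)=0$ means $-\Delta u+u+|u|^{p-1}u=0$, which after pairing with $u$ forces $\|\nabla u\|^2+\|u\|^2+\|u\|_{L^{p+1}}^{p+1}=0$ and hence $u=0$ as well. Thus both $M_*\mu$ and $E_*\mu$ are absolutely continuous once we know $\mu(\{0\})=0$.

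Finally, to exclude an atom at the origin I would again use It\^o and stationarity, this time on $-\log M(u_t)$: its drift is $\al\,\mathcal M(u)/M(u)-\tfrac{\al}{2}A_{0,N}/M(u)+\tfrac{\al}{2}\sigma_N(u)/M(u)^2$, where the $\tfrac{\al}{2}A_{0,N}/M(u)$ term comes from the It\^o correction of $M$; a standard stopping-time argument combined with $\tfrac{d}{dt}\E(-\log M(u_t))=0$ and $\mathcal M(u)\ge\|u\|^2=2M(u)$ yields $\E_{\mu_{\al,N}}[1/M(u)]\le 4/A_{0,N}$, bounded uniformly in $(\al,N)$ since $A_{0,N}$ is bounded below by the sum of the (nonzero) noise intensities of the zero Fourier mode. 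Hence $\mu_{\al,N}(\{\|u\|<\eps\})\lesssim\eps^2$, and by the Portmanteau theorem for open sets, applied through both limits, $\mu(\{\|u\|<\eps\})\lesssim\eps^2$; so $\mu(\{0\})=0$, and with the previous paragraph this proves $M_*\mu,E_*\mu\ll\mathrm{Leb}_\R$. The step I expect to be genuinely hard is the uniform integrability in the passage to the limit, especially for the energy, where the nonlinearity enters the diffusion coefficient $\sigma^E_N$: one must ensure no mass escapes to infinity along the two limits, and it is precisely the dissipation-enhanced moment bounds built into \eqref{eqN} through the factor $e^{\rho(\|u\|_{s-})}$ (and the inverse-mass bound above) that make this control possible; the stopping-time justification of the logarithmic identity is the secondary point requiring care.
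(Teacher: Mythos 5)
Your main argument is, in substance, the paper's: an It\^o--stationarity ``occupation density'' inequality for the Galerkin stationary measures $\mu_{\al,N}$, with constants uniform in $(\al,N)$ because the factor $\al$ cancels between drift and quadratic variation, followed by a passage to the limit. You implement it with the second antiderivative $G$ ($G''=g$), whereas the paper uses the resolvent kernel $\Phi_\lambda$ with $\tfrac12\Phi_\lambda''+g=\lambda\Phi_\lambda$ and lets $\lambda\to0$; these are interchangeable. The one structural difference is how the degeneracy of the diffusion coefficient at $u=0$ is handled: the paper bounds $Q$ from below by $c(\epsilon)$ on $E_N\setminus B_\epsilon$ (compactness in finite dimensions) and so gets $F_*\mu_{\al,N}(\Gamma)\le C(a)\ell(\Gamma)$ for $\Gamma\subset\R\setminus(-a,a)$, which passes to the limit by Portmanteau; you instead keep $\sigma_N$ as a weight inside the integral and pass the weighted inequality to the limit, which costs you the uniform-integrability work you correctly identify but buys the clean implication $\sigma_\infty(u)=0\Rightarrow u=0$. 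Either route reduces the theorem to showing that $\mu$ gives no mass to (a neighbourhood of) the origin.

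That last step is where your proof breaks. The stationarity identity for $-\log M(u_t)$ reads
\begin{align*}
\frac{A_{0,N}}{2}\,\E_{\mu_{\al,N}}\Bigl[\frac{1}{M(u)}\Bigr]
=\E_{\mu_{\al,N}}\Bigl[\frac{\mathcal M(u)}{M(u)}\Bigr]+\frac12\,\E_{\mu_{\al,N}}\Bigl[\frac{\sigma_N(u)}{M(u)^2}\Bigr],
\end{align*}
so your input $\mathcal M(u)\ge 2M(u)$ yields $\E[1/M]\ \ge\ 4/A_{0,N}$ --- a \emph{lower} bound, which says nothing about an atom at $0$. To get the upper bound you assert, you would need to dominate the right-hand side, and that fails twice: $\E[\sigma_N/M^2]$ can only be absorbed into the left under an extra hypothesis like $\max_m a_m^2<A_{0,N}/2$, and, worse, $\E[\mathcal M(u)/M(u)]$ contains $\E\bigl[\|u\|_{s-1}^2/\|u\|^2\bigr]$, whose only available pointwise bound on $E_N$ is $(1+\lambda_N)^{s-1}$, which blows up with $N$ (and a priori the identity could simply read $\infty=\infty$). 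This small-ball estimate $\mu(B_\delta(L^2))\lesssim\delta$ is precisely the nontrivial nondegeneracy input of the proof; the paper obtains it from Shirikyan's local-time argument (Proposition \ref{Shirikyan}, quoting \cite{armen_nondegcgl}), which exploits the exact conservation of the $L^2$-norm by the limiting flow rather than a naive logarithmic Lyapunov functional. As a secondary, fixable point: your approximation ``$g_n\ge\indFn{A}$, $\|g_n\|_{L^1}\to0$'' should be routed through open supersets $U\supset A$ of small Lebesgue measure, approximating $\indFn{U}$ from \emph{below} by continuous functions and using inner/outer regularity, since a continuous function dominating $\indFn{U}$ need not have small $L^1$ norm.
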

Before presenting the proof of the theorem, we establish some results concerning a quite  general context. 
Consider a general equation
\begin{align*}
du=f(u)dt+d\zeta,
\end{align*}
where $\zeta$ is a Brownian motion in some separable Hilbert space $X$, given by
$$\zeta(t,x) =\sum_{|m|\geq 0}a_me_m(x)\beta_m(t),$$
where the parameters entering the sum are similar to \eqref{formula_BM}.
 Suppose that the equation admits an stationary measure $\nu$ concentrated on $X$, the corresponding solution is denoted by $u$. For a functional $F:X\to\R,$ we denote by $F_*\nu$ the distribution of $F(u)$, that is $F_*\nu(\cdot)=\nu(F^{-1}(\cdot)).$
\begin{thm}\label{thm_density_general}
Let $F$ be in $C^2(X,\R)$ satisfying the It\^o change of variable
$$dF(u)=\left(F'(u;f(u))+\frac{1}{2}\sum_{|m|\geq 0}|a_m|^2F''(u;e_m,e_m)\right)dt+\sum_{|m|\geq 0}a_mF'(u;e_m)d\beta_m.$$
 Let $O\subset X$ be an open set and $c$ and $C$ be two positive constants such that
\begin{align}
Q(v):=\sum_{|m|\geq 0} |a_m|^2|F'(v,e_m)|^2&\geq c\quad for\ \nu-almost\  all\ \ v\ in\ O, \label{condition_sur_QV}\\
\int_X\left|F'(v;f(v))+\frac{1}{2}\sum_{|m|\geq 0}|a_m|^2F''(v;e_m,e_m)\right|\nu(dv) &\leq C.\label{condition_sur_Drift}
\end{align}
 Then for any non-negative function $g\in C_0^\infty(\R)$ we have  
\begin{equation}
\int_Og(F(v))\nu(dv)\leq \frac{C}{c}\int_{\R}g(x)dx.
\end{equation}
\end{thm}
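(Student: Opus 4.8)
\emph{Proof proposal.} The plan is to use the stationarity of $\nu$ — equivalently the infinitesimal invariance $\int_X\mathcal{L}\Psi\,d\nu=0$, where $\mathcal{L}\Psi(v)=\Psi'(v;f(v))+\tfrac12\sum_{|m|\geq0}|a_m|^2\Psi''(v;e_m,e_m)$ — applied to $\Psi=G\circ F$ for a well-chosen second antiderivative $G$ of $g$. First, since $g\geq0$, $Q\geq0$, and $Q\geq c$ holds $\nu$-a.e.\ on $O$ by \eqref{condition_sur_QV}, I would reduce the statement to the single estimate
\begin{equation*}
\int_X g(F(v))\,Q(v)\,\nu(dv)\ \leq\ C\,\|g\|_{L^1(\mathbb{R})},
\end{equation*}
because then $\int_O g(F(v))\,\nu(dv)\leq\tfrac1c\int_O g(F(v))Q(v)\,\nu(dv)\leq\tfrac1c\int_X g(F(v))Q(v)\,\nu(dv)$.

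To prove this estimate I would take $G\in C^2(\mathbb{R})$ with $G''=g$ and $G'(x)=\int_{-\infty}^x g(y)\,dy-\tfrac12\|g\|_{L^1(\mathbb{R})}$, so that $\|G'\|_{L^\infty}\leq\tfrac12\|g\|_{L^1(\mathbb{R})}$; symmetrizing the antiderivative in this way is what yields the sharp constant $C/c$ rather than $2C/c$. Let $u$ be the stationary solution with $u(0)$ distributed as $\nu$. Since $d\langle F(u)\rangle_t=Q(u)\,dt$, applying It\^o's formula to $G(F(u))$ and using the change-of-variable formula for $F(u)$ from the hypothesis gives
\begin{equation*}
dG(F(u))=\Big(G'(F(u))\,b(u)+\tfrac12\,g(F(u))\,Q(u)\Big)dt+G'(F(u))\sum_{|m|\geq0}a_m\,F'(u;e_m)\,d\beta_m ,
\end{equation*}
where $b(v):=F'(v;f(v))+\tfrac12\sum_{|m|\geq0}|a_m|^2F''(v;e_m,e_m)$ is the drift controlled in \eqref{condition_sur_Drift}. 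Integrating on $[0,t]$, taking expectations so the stochastic integral disappears, and using stationarity — so that $\mathbb{E}G(F(u(t)))=\mathbb{E}G(F(u(0)))$ and $\mathbb{E}\int_0^t\psi(u(s))\,ds=t\int_X\psi\,d\nu$ — I obtain
\begin{equation*}
\int_X G'(F(v))\,b(v)\,\nu(dv)+\tfrac12\int_X g(F(v))\,Q(v)\,\nu(dv)=0 .
\end{equation*}
Hence $\int_X g(F(v))Q(v)\,\nu(dv)=-2\int_X G'(F(v))b(v)\,\nu(dv)\leq2\|G'\|_{L^\infty}\int_X|b(v)|\,\nu(dv)\leq C\|g\|_{L^1(\mathbb{R})}$ by \eqref{condition_sur_Drift}, which together with the reduction above proves the claim.

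The hard part will be making the expectation step rigorous, because $G$ grows linearly at $\pm\infty$: one must ensure that $\mathbb{E}G(F(u(t)))$ and $\mathbb{E}G(F(u(0)))$ are finite (equivalently $F\in L^1(\nu)$) so that they genuinely cancel, and that the local stochastic integral is a true martingale. I would do this by the standard localization, replacing $t$ by $t\wedge\tau_R$ with $\tau_R=\inf\{s:\ \|u(s)\|_X\geq R\}$: on $[0,t\wedge\tau_R]$ all quantities are bounded, the stochastic integral stopped at $\tau_R$ has zero mean, the non-negative term $\tfrac12\mathbb{E}\int_0^{t\wedge\tau_R}g(F(u))Q(u)\,ds$ is handled by monotone convergence as $R\to\infty$, the drift term is bounded uniformly in $R$ by $\|G'\|_{L^\infty}\,\mathbb{E}\int_0^t|b(u)|\,ds=\|G'\|_{L^\infty}\,t\int_X|b|\,d\nu$ thanks to \eqref{condition_sur_Drift}, and the boundary term is dealt with by shifting $G$ to be non-negative (which leaves $G'$ unchanged) together with an a priori first moment of $F$ under $\nu$. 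For the intended applications $F=M$ and $F=E$ such a moment bound is available directly from the estimates on the stationary measures of \eqref{eqN}; in the abstract statement it can either be assumed as part of the hypotheses or read off from a coercivity property of $F$. The remaining points — the generator identity for $\Psi=G\circ F$ and the Fubini step for the time average — are routine.
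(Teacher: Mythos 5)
Your reduction to the estimate $\int_X g(F(v))Q(v)\,\nu(dv)\leq C\|g\|_{L^1}$ and the balance identity obtained from It\^o plus stationarity are exactly the mechanism the paper uses; the symmetrized antiderivative giving $\|G'\|_{L^\infty}\leq\frac12\|g\|_{L^1}$ even recovers the right constant. The genuine gap is the step you yourself flag and then do not close: taking expectations and cancelling $\E\,G(F(u(t)))$ against $\E\,G(F(u(0)))$. Since $G$ grows linearly, this needs $F\in L^1(\nu)$, which is \emph{not} among the hypotheses of the theorem (only the drift is assumed $\nu$-integrable in \eqref{condition_sur_Drift}); your proposed remedy is to add it as a hypothesis or to import it from the applications, which proves a different (weaker) statement. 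Moreover, the localization you sketch does not actually repair this: after stopping at $\tau_R$, the terminal term is $\E\,G(F(u(t\wedge\tau_R)))$, and $u(t\wedge\tau_R)$ is no longer distributed as $\nu$, so a first moment of $F$ under $\nu$ does not control it; to pass to the limit $R\to\infty$ you would need uniform integrability of $G(F(u(t\wedge\tau_R)))$, e.g.\ via a maximal bound $\E\sup_{s\leq t}|F(u(s))|<\infty$, which by BDG would in turn require integrability of $Q^{1/2}$ (or $Q$) under $\nu$ --- again not assumed. So as written the argument does not prove the theorem as stated.

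The paper avoids this entirely by never using an unbounded antiderivative: instead of $G$ with $G''=g$ it takes the resolvent-type function $\Phi_\lambda(x)=\frac{1}{\sqrt{2\lambda}}\int_\R g(y)e^{-|x-y|\sqrt{2\lambda}}dy$, which solves $\frac12\Phi_\lambda''+g=\lambda\Phi_\lambda$ (see \eqref{equ_Phi_lamda}) and has $\Phi_\lambda,\Phi_\lambda',\Phi_\lambda''$ all bounded. Applying It\^o to $\Phi_\lambda\circ F(u)$ and using stationarity gives the analogue \eqref{chap5_Balance_lambda} of your identity with no moment assumptions on $F$; the wanted inequality then follows by letting $\lambda\to0$, using $\lambda\Phi_\lambda\to0$, $|\Phi_\lambda'|\leq\|g\|_{L^1}$ and $\Phi_\lambda''\to-2g\leq0$ together with \eqref{condition_sur_QV}--\eqref{condition_sur_Drift}. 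Your proof is essentially the formal $\lambda=0$ endpoint of this scheme; to make it a proof of the stated theorem you should either adopt the $\Phi_\lambda$ regularization (or some other bounded substitute for $G$, e.g.\ a cutoff of $G$ with an error term you can control) rather than relying on integrability of $F$ under $\nu$.
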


\begin{proof}
Let $g$ be a positive $C_0^\infty$-function on $\R,$ set the function
\begin{align*}
\Phi_\lambda(x)=\frac{1}{\sqrt{2\lambda}}\int_\R g(y)e^{-|x-y|\sqrt{2\lambda}}dy=\frac{1}{\sqrt{2\lambda}}\left(\int_{-\infty}^xg(y)e^{-(x-y)\sqrt{2\lambda}}dy+\int_x^{\infty}g(y)e^{(x-y)\sqrt{2\lambda}}dy\right).
\end{align*}
Thanks to the properties of $g,$ we can differentiate this function and obtain
\begin{align*}
\Phi_\lambda'(x)=\int_x^{\infty}g(y)e^{(x-y)\sqrt{2\lambda}}dy-\int_{-\infty}^xg(y)e^{-(x-y)\sqrt{2\lambda}}dy.
\end{align*}
Computing the second derivative of $\Phi_\lambda$, we obtain that
\begin{align}
\frac{1}{2}\Phi_\lambda''+g=\lambda \Phi_\lambda.\label{equ_Phi_lamda}
\end{align}

We apply the Itô formula  to $\Phi_\lambda\circ F(u)$: 
\begin{align*}
\Phi_\lambda(F(u))&=\Phi_\lambda(F(u_0))\nonumber\\
&+\int_0^t\left(\Phi_\lambda'(F(u))\left\{F'(u,f(u))+\frac{1}{2}\sum_{|m|\geq 0}|a_m|^2F''(u;e_m,e_m)\right\}+\Phi_\lambda''(F(u))Q(u)\right)ds\nonumber\\ 
&+\sum_{|m|\geq 0}\int_0^t\Phi_\lambda'(F(u))F'(u,e_m)d\beta_m(s).
\end{align*}
Integrating with respect to $\nu$ and using its stationarity, we get
\begin{equation}\label{chap5_Balance_lambda}
\int_X\left(\Phi_\lambda'(F(v))\left\{F'(v,f(v))+\frac{1}{2}\sum_{|m|\geq 0}|a_m|^2F''(u;e_m,e_m)\right\}+\Phi_\lambda''(F(v))Q(v)\right)\nu(dv)=0.
\end{equation}
Now, evaluate the equation \eqref{equ_Phi_lamda} at the point $F(v),\ v\in O,$  multiply by $Q(v)$, then integrate over $O$ against $\nu$. Using \eqref{chap5_Balance_lambda}, we find
\begin{align*}
\int_OQ(v) &g(F(v))\nu(dv) =\int_O\lambda \Phi_\lambda(F(v))Q(v)\nu(dv)
-\frac{1}{2}\int_O\Phi_\lambda''(F(v))Q(v)\nu(dv)\\
&=\int_O\lambda \Phi_\lambda(F(v))Q(v)\nu(dv)
-\frac{1}{2}\int_X\Phi_\lambda''(F(v))Q(v)\nu(dv)\\
&+\frac{1}{2}\int_{X\backslash O}\Phi_\lambda''(F(v))Q(v)\nu(dv)\\
&=\int_O\lambda \Phi_\lambda(F(v))Q(v)\nu(dv)
+\frac{1}{2}\int_X\Phi_\lambda'(F(v))\left\{F'(v,f(v))+\frac{1}{2}\sum_{|m|\geq 0}|a_m|^2F''(v;e_m,e_m)\right\}\nu(dv)\\
&+\frac{1}{2}\int_{X\backslash O}\Phi_\lambda''(F(v))Q(v)\nu(dv).
\end{align*}
Now, in view of the definition of $\Phi_\lambda$, we see clearly that
$$\lambda\Phi_\lambda(x)\to 0\quad as\ \lambda\to 0,\quad \forall x\in\R.$$
Also, as $\lambda\to 0,$ we have, using the non-negativity of $g$, that
$$\Phi_\lambda'(x)\to \int_x^{\infty}g(y)dy -\int_{-\infty}^xg(y)dy\leq \int_x^{\infty}g(y)dy \leq \int_{\R}g(y)dy\quad \forall x\in\R,$$
and, using again the sign of $g$, we obtain as $\lambda\to 0$
$$\Phi_\lambda''(x)\to -2g(x)\leq 0\quad \forall x\in\R.$$
Finally, with the use of the Lebesgue's dominated convergence theorem, we arrive at
\begin{align*}
\int_O g(F(v))\nu(dv)\leq\frac{1}{2}\int_{X}\left|F'(v,f(v))+\frac{1}{2}\sum_{|m|\geq 0}|a_m|^2F''(v;e_m,e_m)\right|\nu(dv)\int_\R g(x)dx.
\end{align*}
It remains to use the hypothesis \eqref{condition_sur_QV} and \eqref{condition_sur_Drift} to obtain the claim.
\end{proof}
Let us consider now the NLS equation for which we have constructed an invariant measure $\mu$. Let $M(u)$ and $E(u)$ be the mass and energy of the equation.
\begin{cor}\label{cor_density_general}
Suppose the numbers $a_m$ are nonzero for all indices.
The laws under $\mu$ of the quantities $M(u)$ and $E(u)$ are absolutely continuous with respect to the Lebesgue measure on $\R\backslash (-a,a)$ for any $a>0$. More precisely, there is a positive constant $C(a)$ such that for any Borel set $\Gamma\subset\R\backslash (-a,a),$
\begin{equation}
M_*\mu(\Gamma),\ F_*\mu(\Gamma)\leq C(a)\ell(\Gamma).\label{est_density_final}
\end{equation}
\end{cor}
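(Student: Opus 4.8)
The strategy is to obtain the bound for $\mu$ from the general Theorem~\ref{thm_density_general}, applied to the stationary measures $\mu_{\alpha,N}$ of the fluctuation--dissipation equations \eqref{eqN}, and then to pass to the inviscid limit $\alpha\to 0$ and the infinite-dimensional limit $N\to\infty$. Fix $a>0$ and a Borel set $\Gamma\subset\R\setminus(-a,a)$; since $M\ge 0$ and $E\ge 0$ we may assume $\Gamma\subset[a,\infty)$. Both $M$ and $E$ are continuous on $H^{r}$ for any $3/2<r<s$ (for $E$ one uses $s\ge 2$ and the Sobolev embedding $H^{r}\hookrightarrow L^{p+1}$), and on each finite-dimensional space $E_N$ all Sobolev norms are equivalent, so $M,E\in C^{2}(E_N,\R)$. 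Hence, by the Portmanteau theorem along the weak convergences $\mu_{\alpha_k,N}\to\mu_N$ of Section~\ref{ASection4Invisc} and $\mu_{N}\to\mu$ of \eqref{Conv_mes_muNtomu}, together with outer regularity of Borel measures on the Polish space $H^{r}$, it suffices to produce a constant $C(a)$, \emph{independent of $(\alpha,N)$}, with $M_*\mu_{\alpha,N}(\Gamma)\le C(a)\,\ell(\Gamma)$ and $E_*\mu_{\alpha,N}(\Gamma)\le C(a)\,\ell(\Gamma)$ for every such $\Gamma$.

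I would then apply Theorem~\ref{thm_density_general} with $X=E_N$, $\nu=\mu_{\alpha,N}$ and $F\in\{M,E\}$, the It\^o change of variables being exactly Propositions~\ref{prop_est_prob_Mass} and~\ref{prop_est_prob_Energy}: the drift of $F(u)$ equals $-\alpha\mathcal{M}(u)+\tfrac{\alpha}{2}A_{0,N}$ for $F=M$, and $-\alpha\mathcal{E}(u)$ plus the corresponding It\^o correction for $F=E$, while the noise coefficients in the sense of Theorem~\ref{thm_density_general} are $\sqrt{\alpha}\,a_m$. The drift--integrability hypothesis \eqref{condition_sur_Drift} holds with a bound uniform in $(\alpha,N)$: by stationarity and Theorem~\ref{thmStatMes}, $\int_{E_N}|\alpha\mathcal{M}(v)-\tfrac{\alpha}{2}A_{0,N}|\,\mu_{\alpha,N}(dv)\le \alpha A_{0,N}\le A_0$, and similarly for $F=E$ using \eqref{est_StatMeas_mathcalE} and the bound on $\int\|v\|_{L^{p+1}}^{p+1}\mu_{\alpha,N}(dv)$ coming from $e^{\rho}\ge 1$ in \eqref{DefmathcalE}. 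The factor $\alpha$ here is matched by a factor $\alpha$ in the quantity $Q$ below, so it drops out of the final ratio $C/c$.

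The delicate point is the non-degeneracy hypothesis \eqref{condition_sur_QV}, with $Q(v)=\alpha\sum_{|m|\le N}|a_m|^2|F'(v,e_m)|^2$: one must find an \emph{open} set $O$ with $F^{-1}(\Gamma)\subset O$ on which $Q\ge c$, $c$ uniform in $(\alpha,N)$. For $F=M$ one has $M'(v,e_m)=(v,e_m)$, so on $\{M(v)>a\}$ the best immediate lower bound is $\alpha^{-1}Q(v)\ge 2a\min_{|m|\le N}|a_m|^2$, which degenerates as $N\to\infty$ because $A_1<\infty$ forces $|a_m|\to 0$. I would circumvent this by localising to low frequencies: fix $K\in\N$, write $P_K$ for the projection onto $E_K$, set $c_K=\min_{|m|\le K}|a_m|^2>0$, and on the open set $O_K=\{v:\|P_K v\|^2>a\}$ estimate $\alpha^{-1}Q(v)\ge c_K\|P_K v\|^2>c_K a$, uniformly in $N\ge K$ and in $\alpha$. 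Theorem~\ref{thm_density_general} applied with $O=O_K$ gives $\mu_{\alpha,N}(\{M(v)\in\Gamma\}\cap O_K)\le \tfrac{A_0}{c_K a}\,\ell(\Gamma)$, and the missing part satisfies $\mu_{\alpha,N}(\{M(v)\in\Gamma\}\setminus O_K)\le\mu_{\alpha,N}(\{\|(1-P_K)v\|^2\ge a\})$; since $\mathcal{M}(v)\ge\|v\|_{s-1}^2\ge(1+\lambda_{K+1})^{s-1}\|(1-P_K)v\|^2$ and, more strongly, $\mathcal{M}(v)\ge e^{\rho(\|v\|_{s-})}\|v\|^2$ with $\rho=3\xi^{-1}$, Chebyshev's inequality together with $\int\mathcal{M}\,d\mu_{\alpha,N}\le \tfrac{A_0}{2}$ makes this remainder tend to $0$ as $K\to\infty$, uniformly in $(\alpha,N)$, and in fact super-polynomially fast. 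Choosing $K=K(a,\ell(\Gamma))$ to balance $\tfrac{A_0}{c_K a}\,\ell(\Gamma)$ against the remainder yields $M_*\mu_{\alpha,N}(\Gamma)\le C(a)\,\ell(\Gamma)$; the case $F=E$ is analogous, with $E'(v,e_m)=(1+\lambda_m)v_m+(|v|^{p-1}v,e_m)$ and $\mathcal{E}$ replacing $\mathcal{M}$.

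The main obstacle is precisely this uniform verification of \eqref{condition_sur_QV}: the naive lower bound on $Q$ collapses in the infinite-dimensional limit, and extracting a genuinely linear density bound, rather than a weaker H\"older-type one, rests on exploiting the uniform exponential moment $\int e^{3\xi^{-1}(\|v\|_{s-})}\|v\|^2\,\mu_{\alpha,N}(dv)\le C$ to push the high-frequency remainder down at the right rate. Everything else --- the It\^o formulas, the uniform drift bound, the continuity of $M$ and $E$ on $H^{r}$, and the two weak limits --- follows routinely from the estimates of Sections~\ref{ASection3flucdiss}--\ref{ASection5StatensemGWP}.
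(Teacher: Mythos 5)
Your overall architecture coincides with the paper's: reduce to a bound for $M_*\mu_{\al,N}$ and $E_*\mu_{\al,N}$ that is uniform in $(\al,N)$, obtain it from Theorem \ref{thm_density_general} with noise coefficients $\sqrt{\al}a_m$ so that the factors of $\al$ cancel in the ratio $C/c$, verify \eqref{condition_sur_Drift} from the stationary estimates \eqref{est_StatMeas_mathcalM}--\eqref{est_StatMeas_mathcalE}, and pass to the limit by Portmanteau. The divergence is in how \eqref{condition_sur_QV} is verified. The paper takes $O=E_N\setminus B_\epsilon$ with $B_\epsilon$ a small $L^2$-ball, claims a lower bound $Q\ge c(\epsilon)$ off that ball, and disposes of the contribution of $B_\epsilon$ by noting it only charges $\Gamma\cap[-\epsilon,\epsilon]$, which is empty for $\epsilon=a/2$; the output is a purely multiplicative bound with no additive remainder. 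You instead take $O=O_K=\{\|P_Kv\|^2>a\}$, which does give the clean lower bound $\al c_K a$ uniformly in $N\ge K$, but forces you to carry the additive remainder $\mu_{\al,N}\bigl(\{\|(1-P_K)v\|^2\ge a\}\bigr)$.

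That remainder is where the argument fails to deliver the quantitative statement \eqref{est_density_final}. It is independent of $\ell(\Gamma)$, so to absorb it into $C(a)\ell(\Gamma)$ you must let $K\to\infty$ as $\ell(\Gamma)\to0$; but the main term then carries $c_K^{-1}=\bigl(\min_{|m|\le K}|a_m|^2\bigr)^{-1}$, which is not controlled: the only standing hypotheses are $a_m\neq0$ and $A_1<\infty$, so $|a_m|$ may decay arbitrarily fast and $c_K^{-1}$ may grow faster than any function of the tail's decay rate. The exponential weight in $\mathcal{M}$ (see \eqref{DefmathcalM} and \eqref{est_StatMeas_mathcalM}) does make the high-frequency tail decay stretched-exponentially in $K$ --- your computation there is fine --- but it does nothing to tame $c_K^{-1}$, so the proposed balancing cannot produce a single constant $C(a)$. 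What your argument does yield is $M_*\mu(\Gamma)\le \frac{A_0}{a\,c_K}\,\ell(\Gamma)+\mathrm{Tail}(K)$ for every $K$, hence absolute continuity (take $\ell(\Gamma)=0$ and send $K\to\infty$), i.e., the qualitative half of the corollary, but not the linear density estimate. Recovering the latter requires a lower bound on $Q$ off a small ball that does not pass through $\min_{|m|\le K}|a_m|^2$ --- which is precisely the role of the paper's choice of $O$, and is itself the delicate ($N$-uniformity) point of the paper's own proof.
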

\begin{proof}
It suffices to prove \eqref{est_density_final} for the measures $\mu_{\al, N}$ where $C$ is independent of $\al$ and $N$. Indeed, once we have such a bound, we can finish the argument by invoking the Portmanteau theorem.\\ 
Since the measure $\mu_{\al,N}$ is concentrated on $E_N,$ let us set $X=E_N$
and $B_\epsilon$ be the closed ball in $E_N$, with center $0$ and radius $0<\epsilon<1.$\\
Set the quadratic variations for $M(u)$ and $E(u)$:
\begin{align*}
\frac{\al}{2}Q_M(u) &=\frac{\al}{2}\sum_{|m|\leq N}|a_m|^2(u,e_m)^2,\\
\frac{\al}{2}Q_E(u) &=\frac{\al}{2}\sum_{|m|\leq N}|a_m|^2(-\Delta u +|u|^{p-1}u,e_m)^2.
\end{align*}
Since $a_m\neq 0$ for any $m$, we see that $Q_M$ and $Q_E$ vanish only at $0.$\\ In what follows the symbol $Q$ denote both $Q_M$ and $Q_E$.
 We claim that \eqref{condition_sur_QV} holds on the set $O=X\backslash B_\epsilon$ for $Q$ with a constant $c=c(\epsilon).$ Indeed, since $Q(v)=0$ only for $v=0$ and $B_\epsilon$ is compact in $X$, we have, from the continuity of $Q$ on $B_\epsilon$, that $Q(B_\epsilon)$ is a compact interval (non reduced to $\{0\}$ because of the non-vanishing property of $Q$ outside $0$) $I_\epsilon\subset [0,\infty)$ containing $0$.  Therefore, if we denote by $c(\epsilon)>0$ the upper point of $I_\epsilon$, we have that
\begin{equation}
Q(v)\geq c(\epsilon) \quad \text{for any $v\in X\backslash B_\epsilon.$}
\end{equation}
Therefore 
\begin{equation}\label{trick_in_density_Qgeqceps}
\frac{\al}{2}Q(v)\geq \frac{\al}{2}c(\epsilon) \quad \text{for any $v\in X\backslash B_\epsilon.$}
\end{equation}
Now, using Theorem \ref{thm_density_general}, we claim that for a constant $C$ independent of $\al$ and $N,$ we have
\begin{align*}
\int_{X}g(F(v))\mu_{\al,N}(dv) &=\int_{X\backslash B_\epsilon}g(F(v))\mu_{\al,N}(dv)+ \int_{B_\epsilon}g(F(v))\mu_{\al,N}(dv)\\
&\leq \frac{C\al}{c(\epsilon)}\int_\R g(x)dx+\int_{B_\epsilon}g(F(v))\mu_{\al,N}(dv).
\end{align*}
Indeed, according to \eqref{condition_sur_Drift}, $C$ must be a bound for the following quantities (drifts of $M(u)$ and $E(u)$):
\begin{align*}
\E|M'(u,\i[(\Delta-1) u-P_N(|u|^{p-1}u)]-\al[(1-\Delta)^{s-1}+e^{\rho(\|u\|_{s-})}]u)|=\al\E\mathcal{M}(u),
\end{align*}
or (using \eqref{DefmathcalE})
\begin{align*}
\E|E'(u,\i[(\Delta-1) u-P_N(|u|^{p-1}u)]-\al[(1-\Delta)^{s-1}+e^{\rho(\|u\|_{s-})}]u)|\leq \al\E\mathcal{E}(u),
\end{align*}
depending on the functional we consider.
But in both cases, the estimates \eqref{est_StatMeas_mathcalM} and \eqref{est_StatMeas_mathcalE} provide bounds for $\E\mathcal{M}(u),\ \E\mathcal{E}(u)$ that are independent of both $\al$ and $N.$ Then we consider such bounds $C$.\\
By an standard approximation argument, we pass from $C_0^\infty$-functions to indicator functions in the above inequality. We arrive at, for every $a>0,$ for every Borel set $\Gamma$ in $\R$ contained in $\R\backslash (-a,a),$ and for any $\epsilon>0,$                                                                            
\begin{align*}
F_*\mu_{\al,N}(\Gamma)\leq \frac{C}{c(\epsilon)}\ell(\Gamma)+F_*\nu(\Gamma\cap [-\epsilon,\epsilon]).
\end{align*}
Choosing $\epsilon=a/2$, we obtain
\begin{align}
F_*\mu_{\al,N}(\Gamma)\leq C(a)\ell(\Gamma)\quad \text{for any Borel set }  \Gamma\subset\R\backslash(-a,a).\label{estimee_de_la_density_restriction}
\end{align}  
\end{proof}

Let us present here a result of estimation of the measure $\mu$ around $0.$  The strategy of its proof is due to Shirikyan \cite{armen_nondegcgl} and uses the properties of the local time of a functional based on the $L^2-$norm of the fluctuation-dissipation stationary solutions. The preservation of this norm by the limitting flow is  crucial to obtain uniform bounds that allow to pass to the limit, we refer to \cite{armen_nondegcgl} for a complete proof.
\begin{prop}\label{Shirikyan}
Let $\lambda_m\neq 0$, at least for two indices.
There is a constant $C>0$ such that
\begin{align}
\mu(B_\delta(L^2))\leq C\delta,\quad for\ any\ \delta>0.\label{Noatom}
\end{align}
\end{prop}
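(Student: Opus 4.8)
The plan is to establish \eqref{Noatom} first at the level of the fluctuation--dissipation stationary measures $\mu_{\alpha,N}$ of \eqref{eqN}, uniformly in $\alpha\in(0,1)$, in $N$ and in $\delta$, and then to pass to the inviscid and infinite-dimensional limits. Concretely, I would prove that there is $C>0$, depending only on $A_0$, such that
\begin{align*}
\mu_{\alpha,N}\big(\{u:\ \|u\|_{L^2}\leq\delta\}\big)\leq C\delta\qquad\text{for all }\delta>0.
\end{align*}
Given this, \eqref{Noatom} follows from the weak convergences used to build $\mu$. Indeed the $L^2$-norm is continuous on each $H^r$, $r<s$ (since $\|\cdot\|_{L^2}\leq\|\cdot\|_r$), so $\{u\in H^r:\ \|u\|_{L^2}<\delta\}$ is open in $H^r$; applying the Portmanteau theorem along $\mu_{\alpha_k,N}\to\mu_N$ and then along $\mu_{\psi(N)}\to\mu$ yields
$\mu(\{\|u\|<\delta\})\leq\liminf_N\mu_N(\{\|u\|<\delta\})\leq\liminf_N\liminf_k\mu_{\alpha_k,N}(\{\|u\|\leq\delta\})\leq C\delta$, and the closed ball is absorbed at the cost of a harmless factor (replace $\delta$ by $2\delta$).

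For the uniform bound, the key object is the real process $Y_t:=\|u_{\alpha,N}(t)\|^2=2M(u_{\alpha,N}(t))$ along the stationary solution $u_{\alpha,N}$ of \eqref{eqN}. By the It\^o computation of Proposition~\ref{prop_est_prob_Mass} and \eqref{DefmathcalM}, $Y$ is a continuous semimartingale
\begin{align*}
dY_t=\big(\alpha A_{0,N}-2\alpha\,\mathcal M(u_{\alpha,N}(t))\big)\,dt+2\sqrt{\alpha}\sum_{|m|\leq N}a_m\big(u_{\alpha,N}(t),e_m\big)\,d\beta_m(t),
\end{align*}
whose martingale part has quadratic variation $4\alpha\,Q_M(u_{\alpha,N}(t))\,dt$ with $Q_M(v)=\sum_{|m|\leq N}|a_m|^2(v,e_m)^2$, which is strictly positive off the origin because all $a_m$ are non-zero. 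Two uniform facts are then in hand: the expected absolute drift of $Y$ is bounded by $\alpha A_{0,N}+2\alpha\,\E_{\mu_{\alpha,N}}\mathcal M(u)=2\alpha A_{0,N}\leq 2\alpha A_0$ by \eqref{est_StatMeas_mathcalM}; and by stationarity $\mu_{\alpha,N}(\{\|u\|\leq\delta\})=\tfrac1T\,\E\int_0^T\mathbf 1_{\{Y_t\leq\delta^2\}}\,dt$ for every $T>0$. One now introduces the local time $L_T^y$ of $Y$ and uses the occupation-times formula $\int_0^T g(Y_t)\,d\langle Y\rangle_t=\int_{\R}g(y)L_T^y\,dy$ to convert the occupation time of $\{Y\leq\delta^2\}$ into an integral of local times, while Tanaka's formula together with stationarity (which cancels the terms $\E|Y_T-y|-\E|Y_0-y|$ and the martingale term) and the drift bound give $\E L_T^y\leq 2\alpha A_0\,T$, uniformly in $\alpha,N,y$.

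The main obstacle, and the reason this is genuinely delicate rather than a routine estimate, is that the diffusion coefficient $Q_M$ degenerates as $\|u\|_{L^2}\to0$, so the occupation-times formula cannot be inverted naively near the level $y=0$; on the other hand any fixed finite-dimensional projection $(u,e_m)$, whose noise is non-degenerate, carries an $O(1)$ (not $O(\alpha)$) drift, which would produce an unusable factor $\alpha^{-1}$ after passing to the inviscid limit. This is exactly the point handled by Shirikyan's local-time method: one keeps the $L^2$-functional $Y$ itself, whose drift is genuinely $O(\alpha)$ so that the estimates are stable as $\alpha\to0$, and one controls the behaviour near the origin through a careful choice of cut-off functions, crucially using that $\|\cdot\|_{L^2}^2$ is exactly preserved by the limiting Hamiltonian flows $\phi_N$ (so that in the limit $\alpha\to0$ the process $Y$ degenerates to a constant, the borderline situation for which the bound $C\delta$ still holds). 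I would follow \cite{armen_nondegcgl} for this technical core; combined with the drift and local-time bounds above it gives $\mu_{\alpha,N}(\{\|u\|_{L^2}\leq\delta\})\leq C\delta$, and then the Portmanteau argument of the first paragraph yields \eqref{Noatom}.
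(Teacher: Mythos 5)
Your plan is essentially the paper's own argument: the paper proves Proposition \ref{Shirikyan} exactly by Shirikyan's local-time method applied to $\|u\|_{L^2}^2$ along the stationary fluctuation--dissipation solutions (whose drift is $O(\alpha)$ precisely because the Hamiltonian part conserves the mass, with \eqref{est_StatMeas_mathcalM} supplying the uniform drift bound), followed by passage to the inviscid and infinite-dimensional limits, and it likewise refers to \cite{armen_nondegcgl} for the full details. Your It\^o/Tanaka/occupation-time scaffolding and the Portmanteau limit step are correct, and the delicate point you defer to \cite{armen_nondegcgl} (the degeneracy of $Q_M$ at the origin, where what is really used is that at least two of the $a_m$ are non-zero rather than all of them) is exactly the part the paper also leaves to that reference.
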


\begin{proof}[Proof of Theorem \ref{thm_density}]
For $\Gamma \in \text{Bor}(\R)$, let $\delta >0$, we write
\begin{align*}
M_*\mu(\Gamma)=M_*\mu(\Gamma\cap [-\delta,\delta])+M_*\mu(\Gamma\cap(\R\backslash [-\delta,\delta])).
\end{align*}
It remains to apply the Corollary \ref{cor_density_general}, and the Proposition \ref{Shirikyan} to obtain the claimed absolute continuity for $M_*\mu$. We do the same for the measure $E_*\mu.$ That finishes the proof.
\end{proof}

\section*{Acknowledgment.}
The author is indebted to Armen Shirikyan and Nikolay Tzvetkov for valuable discussions and remarks  that have been very beneficial for this work. He also thanks Gigliola Staffilani for pointing him out the very interesting problem of energy supercritical PDEs.  He is very grateful as well to an anonymous referee and to Bjoern Bringmann for pointing out a gap in a earlier version of the paper.

\bibliography{superKG}
\bibliographystyle{alpha}
\end{document}